\documentclass[12pt]{article}%
\usepackage{amsmath}
\usepackage{amsfonts}
\usepackage{amssymb}
\usepackage{graphicx}%
\setcounter{MaxMatrixCols}{30}
\providecommand{\U}[1]{\protect\rule{.1in}{.1in}}
\newtheorem{theorem}{Theorem}
\newtheorem{acknowledgement}[theorem]{Acknowledgement}

\newtheorem{lemma}[theorem]{Lemma}

\newtheorem{remark}[theorem]{Remark}

\newenvironment{proof}[1][Proof]{\noindent\textbf{#1.} }{\ \rule{0.5em}{0.5em}}
\begin{document}

\title{Discrete semiclassical orthogonal polynomials of class 2}
\author{Diego Dominici \thanks{e-mail: diego.dominici@dk-compmath.jku.at}\\Johannes Kepler University Linz\\Doktoratskolleg \textquotedblleft Computational Mathematics\textquotedblright\\Altenberger Stra\ss e 69, 4040 Linz, Austria.\\Permanent address: Department of Mathematics\\State University of New York at New Paltz\\1 Hawk Dr., New Paltz, NY 12561-2443, USA
\and Francisco Marcell{\'a}n \thanks{e-mail: pacomarc@ing.uc3m.es}\\Departamento de Matem\'aticas \\Universidad Carlos III de Madrid \\Escuela Polit\'ecnica Superior \\Av. Universidad 30 \\28911 Legan\'es\\Spain}
\maketitle

\begin{abstract}
In this contribution, discrete semiclassical orthogonal polynomials of class
$s\leq2$ are studied. By considering all possible solutions of the Pearson
equation, we obtain the canonical families in each class. We also consider
limit relations between these and other families of orthogonal polynomials.

\end{abstract}

\strut

\strut

\section{\strut Introduction}

Semiclassical linear functionals with respect to the derivative operator have
received increasing attention in the literature of orthogonal polynomials
taking into account their connections with many interesting problems in
mathematical physics and numerical quadratures. From the so called Pearson
equation associated with those linear functionals, ladder operators for the
corresponding sequences of orthogonal polynomials can be deduced from the
structure relations they satisfy (see \cite{MR803215},\cite{MR932783},
\cite{MR1270222}). As a direct consequence, a simple computation allows to
deduce the corresponding holonomic equations, i.e. second order linear
differential equations with polynomial coefficients of bounded degrees
(depending on the degree of the polynomial) associated with the class of the
linear functional (see \cite{MR2191786}, \cite{MR1270222}).\newline

The case of semiclassical linear functionals with respect to the difference
operator in a uniform lattice is related to discretizations of holonomic
equations, a fact pointed out in the classical case by Nikiforov, Uvarov and
Suslov in a stimulating monograph on classical discrete orthogonal polynomials
\cite{MR1149380}.\newline

In both situations, a key problem is the classification of semiclassical
linear functionals by using a hierarchy according to their class. This
represents an alternative method to the Askey tableau, where classical
orthogonal polynomials appear as a hierarchy of hypergeometric polynomials
(see \cite{MR2656096}).\newline

Examples of semiclassical linear functionals with respect to the derivative
operator when the class is either $s=1$ (see \cite{MR1186737}) or $s=2$ (see
\cite{MR2929632}) have been studied in the literature. Nevertheless, some of
them are related to perturbations of classical linear functionals by the
addition of Dirac functionals, or their derivatives, supported on convenient
points (\cite{MR2061464}, \cite{MR1199643}). The important fact is that these
orthogonal polynomials, the so called Krall-type orthogonal polynomials, are
eigenfunctions of higher order differential operators. They are related to
bispectral problems and have been intensively used in the generation of
exceptional polynomials, an useful tool in the study of integrable
systems.\newline

In the discrete case, there is a limited development in these topics (see
\cite{MR1665164}). The case $s=0$ has been deeply studied by many authors (see
\cite{MR1464669}, \cite{MR1340932} ) and the approach to the case $s=1$ has
been done by Dominici and Marcell\'{a}n in a recent paper (see
\cite{MR3227440}). Some examples of semiclassical discrete polynomials for
$s=1$ appeared in \cite{MR3765964}.\newline

The aim of this contribution is twofold. First, we deal with three different
perturbations of discrete linear functionals based on the so called linear
spectral transformations (Uvarov, Christoffel and Geronimus) \cite{MR1482157},
truncations and symmetrization processes, respectively. Second, from the above
transformations we are able to generate semiclassical linear functionals of
class $s=1$ and $s=2.$ Some of them appear in a natural way in the literature
but others are new. We put attention only in the representation of the
corresponding Stieltjes functions and, as a consequence, we deduce the
corresponding class. Notice that an interesting and open problem is to analyze
the coefficients of the three term recurrence relation they satisfy, in
particular the so called Laguerre-Freud equations. For generalized Charlier
polynomials such equations appeared in the monograph by W. Van Assche
\cite{MR3729446} as well as in \cite{MR1737084}. They lead to limiting cases
of the discrete Painlev\'{e} equation corresponding to $D_{4}^{c}$ in the
Sakai's classification. A generalization of the Krawtchouk polynomials and the
fifth Painlev\'{e} equation are studied in \cite{MR3173496}. Some interesting
examples of semiclassical extensions of Meixner polynomials appeared in
\cite{MR2749070} and \cite{MR2861208}. They yield a limit case of asymmetric
discrete Painlev\'{e} IV equations. Discrete orthogonal polynomials with
respect to the hypergeometric weight have been analyzed in \cite{MR3846851}
and \cite{MR3813279}.\newline

The structure of the paper is the following. In Section 2, a basic background
about discrete semiclassical linear functionals is given. In particular, the
class is defined from the degrees of polynomials appearing in the discrete
Pearson equation they satisfy. We emphasize the role of the Stieltjes function
associated with every linear functional. In the semiclassical case, the
corresponding Stieltjes function satisfies a first order linear difference
equation with polynomial coefficients. They provide the key information about
the class of the linear functional. Section 3 deals with the three canonical
perturbations of linear functionals we will deal in the sequel. The
corresponding Stieltjes functions are deduced. In Section 4, classical
discrete linear functionals are revisited. In Section 5 we analyze
semiclassical linear functionals of class $s=1$ in such a way that the results
given in \cite{MR3227440} are completed. Finally, in Section 6, we study
examples of discrete semiclassical linear functionals of class $s=2$ generated
by the three canonical perturbations introduced in Section 3.\newline

\section{Basic background on discrete semiclassical linear functionals}

Let $\mathbb{N}_{0}$ denote the set $\mathbb{N}\cup\left\{  0\right\}
=0,1,2,\ldots.$ Throughout the paper we will use the notation
\[
\mathbf{a}=\left(  a_{1},\ldots,a_{p}\right)  ,\quad\mathbf{b}=\left(
b_{1},\ldots,b_{q}\right)  ,\quad p,q\in\mathbb{N}_{0},
\]%
\begin{equation}
x+\mathbf{a=}%
{\displaystyle\prod\limits_{i=1}^{p}}
\left(  x+a_{i}\right)  ,\quad x+\mathbf{b}=%
{\displaystyle\prod\limits_{i=1}^{q}}
\left(  x+b_{i}\right)  , \label{x+a}%
\end{equation}
and%
\[
\left(  \mathbf{a}\right)  _{n}=%
{\displaystyle\prod\limits_{i=1}^{p}}
\left(  a_{i}\right)  _{n},\quad\left(  \mathbf{b}\right)  _{n}=%
{\displaystyle\prod\limits_{i=1}^{q}}
\left(  b_{i}\right)  _{n},\quad n\in\mathbb{N}_{0},
\]
where the \emph{Pochhammer polynomial} $\left(  x\right)  _{n}$ is defined by
$\left(  x\right)  _{0}=1$ and \cite[18:12]{MR2466333}%
\begin{equation}
\left(  x\right)  _{n}=%
{\displaystyle\prod\limits_{j=0}^{n-1}}
\left(  x+j\right)  ,\quad n\in\mathbb{N}. \label{Poch}%
\end{equation}
From (\ref{Poch}), we immediately obtain the recurrence%
\begin{equation}
\left(  x\right)  _{n+1}=\left(  x+n\right)  \left(  x\right)  _{n},\quad
n\in\mathbb{N}_{0}. \label{Poch1}%
\end{equation}
On the other hand, if we consider the analytic continuation with $\alpha
\in\mathbb{C},$ then%
\begin{equation}
\left(  x\right)  _{\alpha}=\frac{\Gamma\left(  x+\alpha\right)  }%
{\Gamma\left(  x\right)  },\quad-\left(  x+\alpha\right)  \notin\mathbb{N}%
_{0}, \label{Gamma}%
\end{equation}
where $\Gamma\left(  x\right)  $ denotes the Gamma function \cite[Chapter
5.]{MR2723248}.

In this work, we deal with linear functionals defined in $\mathbb{C}\left[
x\right]  ,$ the linear space of polynomials with complex coefficients.
Indeed,
\begin{equation}
L\left[  r\right]  =%
{\displaystyle\sum\limits_{x=0}^{\infty}}
r\left(  x\right)  \varrho\left(  x\right)  ,\quad r\in\mathbb{C}\left[
x\right]  , \label{Ldisc}%
\end{equation}
where the weight function $\varrho:\mathbb{N}_{0}\rightarrow\mathbb{C}$ is
\begin{equation}
\varrho\left(  x\right)  =\frac{\left(  \mathbf{a}\right)  _{x}}{\left(
\mathbf{b}+1\right)  _{x}}\frac{z^{x}}{x!}. \label{rho}%
\end{equation}
Using (\ref{Poch1}), we see that the weight function $\varrho\left(  x\right)
$ satisfies the \emph{Pearson equation}%
\begin{equation}
\frac{\varrho\left(  x+1\right)  }{\varrho\left(  x\right)  }=\frac
{\eta\left(  x\right)  }{\sigma\left(  x+1\right)  }, \label{pearson}%
\end{equation}
where the polynomials $\eta\left(  x\right)  ,\sigma\left(  x\right)  $ are
\begin{equation}
\eta\left(  x\right)  =z\left(  x+\mathbf{a}\right)  ,\quad\sigma\left(
x\right)  =x\left(  x+\mathbf{b}\right)  . \label{eta sigma}%
\end{equation}
Note that from (\ref{x+a}) we have $\deg\left(  \eta\right)  =p$ and
$\deg\left(  \sigma\right)  =q+1$ and we assume they are coprime.

The linear functional $L$ (and also $\varrho)$ is said to be
\emph{semiclassical}. The class of $L$ (and also $\varrho)$ is the
non-negative integer number
\[
s=\max\left\{  \deg\left(  \sigma\right)  -2,\ \deg\left(  \sigma-\eta\right)
-1\right\}  .
\]
The linear functionals of class $s=0$ are called \emph{discrete classical
\cite{MR1340932}} linear functionals.

The number $s$ depends on $p,q,$ and $z.$ It follows from (\ref{eta sigma})
that we have four cases to consider:

\begin{enumerate}
\item $p>q+1.$ In this case, $\deg\left(  \sigma-\eta\right)  -1=p-1>q$ and
$s=\max\left\{  q-1,\ p-1\right\}  =p-1.$

\item $p<q+1.$ In this case, $\deg\left(  \sigma-\eta\right)  -1=q$ and
$s=\max\left\{  q-1,\ q\right\}  =q.$

\item $q+1=p,$ $z\neq1.$ In this case, $\deg\left(  \sigma-\eta\right)  -1=q$
and $s=\max\left\{  q-1,\ q\right\}  =q.$

\item $q+1=p,$ $z=1.$ In this case, $\deg\left(  \sigma-\eta\right)  -1=q-1$
and $s=\max\left\{  q-1,\ q-1\right\}  =q-1.$
\end{enumerate}

Thus, we conclude that%
\begin{equation}
s=\left\{
\begin{array}
[c]{ll}%
p-1, & p>q+1,\\
q, & p<q+1,\\
q, & p=q+1,\quad z\neq1,\\
q-1, & p=q+1,\quad z=1.
\end{array}
\right.  \label{s}%
\end{equation}
Note that we can also write $s$ as%
\begin{equation}
s=\left\{
\begin{array}
[c]{ll}%
\max\left\{  \deg\left(  \sigma\right)  -1,\ \deg\left(  \sigma-\eta\right)
-1\right\}  , & \deg\left(  \sigma-\eta\right)  \neq\deg\left(  \sigma\right)
-1,\\
\deg\left(  \sigma\right)  -2, & \deg\left(  \sigma-\eta\right)  =\deg\left(
\sigma\right)  -1.
\end{array}
\right.  \label{s1}%
\end{equation}

The \emph{Stieltjes transform} of the linear functional $L$ is defined by%
\begin{equation}
S\left(  t\right)  =L\left[  \frac{1}{t-x}\right]  . \label{Stieltjes}%
\end{equation}
Here the linear functional is acting on $x.$ The connection between
$\eta\left(  x\right)  ,\sigma\left(  x\right)  $ and $S\left(  t\right)  $ is
given by the following result.

\begin{lemma}
The Stieltjes transform $S\left(  t\right)  $ of the linear functional $L$
satisfies the difference equation%
\begin{equation}
\sigma\left(  t+1\right)  S\left(  t+1\right)  -\eta\left(  t\right)  S\left(
t\right)  =\xi\left(  t\right)  , \label{ST1}%
\end{equation}
where $\xi\left(  t\right)  $ is a polynomial of degree $s$ that is the class
of the linear functional.
\end{lemma}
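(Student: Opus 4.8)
The plan is to treat $\xi(t):=\sigma(t+1)S(t+1)-\eta(t)S(t)$ as a meromorphic function, to show that it is entire and of polynomial growth (hence a polynomial), and then to extract its degree from the behaviour at infinity. From (\ref{Stieltjes}) the function $S(t)=\sum_{x=0}^{\infty}\varrho(x)/(t-x)$ has only simple poles, located at the non-negative integers, with $\operatorname*{Res}_{t=n}S(t)=\varrho(n)$; accordingly $S(t+1)$ has simple poles at $t=-1,0,1,\ldots$ with $\operatorname*{Res}_{t=n}S(t+1)=\varrho(n+1)$.

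First I would verify that $\xi$ has no poles. At each $t=n$ with $n\ge 0$ the residue of $\xi$ equals $\sigma(n+1)\varrho(n+1)-\eta(n)\varrho(n)$, which vanishes by the Pearson equation (\ref{pearson}). The only remaining candidate pole is at $t=-1$, and it is removed because $\sigma(t+1)$ has there the simple zero $\sigma(0)=0$ forced by the factor $x$ in (\ref{eta sigma}). Hence $\xi$ is entire. The same cancellation can be displayed without any appeal to analytic continuation: reindexing $S(t+1)$ via $x\mapsto x+1$ and using (\ref{pearson}) to replace $\sigma(k+1)\varrho(k+1)$ by $\eta(k)\varrho(k)$ gives
\[
\xi(t)=\varrho(0)\,\frac{\sigma(t+1)}{t+1}+\sum_{k=0}^{\infty}\left[\frac{\sigma(t+1)-\sigma(k+1)}{t-k}\,\varrho(k+1)-\frac{\eta(t)-\eta(k)}{t-k}\,\varrho(k)\right],
\]
in which every difference quotient is a genuine polynomial in $t$, of degree $\deg\sigma-1=q$ and $\deg\eta-1=p-1$ respectively, and $\sigma(t+1)/(t+1)$ has degree $q$.

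To obtain the exact degree I would use the moment expansion $S(t)=\sum_{n\ge0}\mu_n t^{-n-1}$, where $\mu_0=L[1]\neq0$. Since $S(t),S(t+1)=O(t^{-1})$, comparing leading terms gives $\sigma(t+1)S(t+1)\sim\mu_0 t^{q}$ and $\eta(t)S(t)\sim z\mu_0 t^{p-1}$, so that $\xi$, being entire, is a polynomial of degree at most $\max\{q,p-1\}$. Running through the four cases of (\ref{s}): when $p-1\neq q$ the leading powers are distinct and $\deg\xi=\max\{q,p-1\}=s$; when $p=q+1$ both terms feed into the coefficient of $t^{q}$, which equals $(1-z)\mu_0$ and is nonzero exactly for $z\neq1$, giving $s=q$, whereas for $z=1$ it vanishes and the degree drops to $q-1=s$.

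The main obstacle is precisely this borderline case $p=q+1$, $z=1$: one must check that after the cancellation of the $t^{q}$ term the coefficient of $t^{q-1}$ is genuinely nonzero, so that the degree is exactly $q-1$ rather than smaller. This is where the next moment must be computed, and where the assumption that $\eta$ and $\sigma$ are coprime (together with $\mu_0\neq0$) enters to guarantee that $\deg\xi$ equals $s$ and does not drop further.
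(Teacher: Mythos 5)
Your proposal is sound on the existence part, and there it is closer to the paper than you may realize: your second display is exactly the paper's formula (\ref{xi}), because by the Pearson equation $\sigma(k+1)\varrho(k+1)=\eta(k)\varrho(k)$ your two difference quotients recombine into the paper's single polynomial $\lambda_{k}(t)=\left[\sigma(t+1)\eta(k)-\eta(t)\sigma(k+1)\right]/(t-k)$ weighted by $\varrho(k)/\sigma(k+1)$; the residue computation is just an analytic wrapper for the same cancellation. Where you genuinely diverge is the degree count, and there your route is the better one. The paper reads the degree off term by term: it asserts $\deg\lambda_{x}=\deg(\sigma-\eta)-1$ and excludes the case $\deg(\sigma-\eta)=\deg(\sigma)-1$ by the inference ``$\eta(x)\neq\sigma(x+1)$, hence $\deg(\sigma-\eta)\neq\deg(\sigma)-1$''. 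That inference is false precisely when $p=q+1$, $z=1$: for the Hahn weight, $\sigma(x)-\eta(x)=(b-a+N)x+aN$ has degree $\deg(\sigma)-1$, each $\lambda_{x}$ has degree $q$ (not $q-1$), and the drop to $q-1$ occurs only after a telescoping summation over $x$ (indeed the paper's own Hahn subsection exhibits a constant $\xi$, not one of degree $\max\{\deg(\sigma)-1,\deg(\sigma-\eta)-1\}=1$). Your moment asymptotics detect this automatically: the coefficient of $t^{q}$ is $(1-z)\nu_{0}$, which vanishes exactly at $z=1$. So for the bound $\deg\xi\leq s$ your argument is uniform over all four cases of (\ref{s}), which the paper's is not.

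The genuine gap is the borderline case you flagged yourself, and the repair you propose does not close it. For $p=q+1$, $z=1$, one more order of the expansion gives the coefficient of $t^{q-1}$ in $\xi$ as $(q+\gamma)\nu_{0}$, with $\gamma$ as in (\ref{gamma}), and coprimality of $\eta$ and $\sigma$ does not prevent $q+\gamma=0$. Concretely, take $\varrho(x)=\frac{(3)_{x}(-1)_{x}}{(2)_{x}\,x!}$, i.e. a Hahn weight with $a=3$, $N=1$, $b=1$, so that $b+1-a+N=0$: here $\eta(x)=(x+3)(x-1)$ and $\sigma(x)=x(x+1)$ are coprime, $\nu_{0}=-1/2\neq0$, yet $\sigma(t+1)S(t+1)-\eta(t)S(t)\equiv0$, so $\deg\xi$ is not $s=0$. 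In other words, the exact-degree claim is not unconditionally true; it requires a genericity hypothesis (nonvanishing of $(q+\gamma)\nu_{0}$ in the fourth case, and of $\nu_{0}$, respectively $(1-z)\nu_{0}$, in the others — note that your assumption $L[1]\neq0$ is itself not stated in the paper). Your proof, like the paper's, therefore establishes only $\deg\xi\leq s$ without further assumptions; its merit is that it isolates the exact coefficients whose nonvanishing characterizes $\deg\xi=s$, but the final step needs that hypothesis, not coprimality.
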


\begin{proof}
From (\ref{Stieltjes}), we have%
\begin{gather*}
\sigma\left(  t+1\right)  S\left(  t+1\right)  -\eta\left(  t\right)  S\left(
t\right)  =\sigma\left(  t+1\right)
{\displaystyle\sum\limits_{x=0}^{\infty}}
\frac{\varrho\left(  x\right)  }{t+1-x}-\eta\left(  t\right)
{\displaystyle\sum\limits_{x=0}^{\infty}}
\frac{\varrho\left(  x\right)  }{t-x}\\
=\frac{\sigma\left(  t+1\right)  }{t+1}+\sigma\left(  t+1\right)
{\displaystyle\sum\limits_{x=1}^{\infty}}
\frac{\varrho\left(  x\right)  }{t+1-x}-\eta\left(  t\right)
{\displaystyle\sum\limits_{x=0}^{\infty}}
\frac{\varrho\left(  x\right)  }{t-x}\\
=\frac{\sigma\left(  t+1\right)  }{t+1}+\sigma\left(  t+1\right)
{\displaystyle\sum\limits_{x=0}^{\infty}}
\frac{\varrho\left(  x+1\right)  }{t-x}-\eta\left(  t\right)
{\displaystyle\sum\limits_{x=0}^{\infty}}
\frac{\varrho\left(  x\right)  }{t-x}.
\end{gather*}
Using the Pearson equation (\ref{pearson}), we get%
\begin{gather*}
\sigma\left(  t+1\right)  S\left(  t+1\right)  -\eta\left(  t\right)  S\left(
t\right)  =\frac{\sigma\left(  t+1\right)  }{t+1}+\sigma\left(  t+1\right)
{\displaystyle\sum\limits_{x=0}^{\infty}}
\frac{\eta\left(  x\right)  }{\sigma\left(  x+1\right)  }\frac{\varrho\left(
x\right)  }{t-x}-\eta\left(  t\right)
{\displaystyle\sum\limits_{x=0}^{\infty}}
\frac{\varrho\left(  x\right)  }{t-x}\\
=\frac{\sigma\left(  t+1\right)  }{t+1}+%
{\displaystyle\sum\limits_{x=0}^{\infty}}
\frac{\sigma\left(  t+1\right)  \eta\left(  x\right)  -\eta\left(  t\right)
\sigma\left(  x+1\right)  }{\sigma\left(  x+1\right)  }\frac{\varrho\left(
x\right)  }{t-x}.
\end{gather*}
But%
\[
\lambda_{x}\left(  t\right)  =\frac{\sigma\left(  t+1\right)  \eta\left(
x\right)  -\eta\left(  t\right)  \sigma\left(  x+1\right)  }{t-x}%
\]
is a polynomial in $t$ (with coefficients depending on $x)$ of degree
$\deg\left(  \sigma-\eta\right)  -1.$ Since $\eta\left(  x\right)  \neq
\sigma\left(  x+1\right)  $ (otherwise $\varrho$ is a constant function), we
have $\deg\left(  \sigma-\eta\right)  \neq\deg\left(  \sigma\right)  -1.$

Therefore,%
\begin{equation}
\xi\left(  t\right)  =\frac{\sigma\left(  t+1\right)  }{t+1}+%
{\displaystyle\sum\limits_{x=0}^{\infty}}
\lambda_{x}\left(  t\right)  \frac{\varrho\left(  x\right)  }{\sigma\left(
x+1\right)  } \label{xi}%
\end{equation}
is a polynomial in $t$ of degree $\max\left\{  \deg\left(  \sigma\right)
-1,\ \deg\left(  \sigma-\eta\right)  -1\right\}  .$ From (\ref{s1}) we
conclude that $\deg\left(  \xi\right)  =s.$
\end{proof}

Let the \emph{falling factorial polynomials} be defined by $\phi_{0}\left(
x\right)  =1$ and
\begin{equation}
\phi_{n}\left(  x\right)  =%
{\displaystyle\prod\limits_{j=0}^{n-1}}
\left(  x-j\right)  ,\quad n\in\mathbb{N}. \label{phidef}%
\end{equation}
Comparing (\ref{phidef}) with (\ref{Poch}), we see that%
\begin{equation}
\phi_{n}\left(  x\right)  =\left(  -1\right)  ^{n}\left(  -x\right)
_{n}=\left(  x-n+1\right)  _{n}, n\in\mathbb{N}_{0}, \label{equal}%
\end{equation}
or, using (\ref{Gamma}),%
\begin{equation}
\phi_{n}\left(  x\right)  =\frac{\Gamma\left(  x+1\right)  }{\Gamma\left(
x-n+1\right)  }=\frac{x!}{\left(  x-n\right)  !}=n!\binom{x}{n},\quad
n\in\mathbb{N}_{0}. \label{phi-gamma}%
\end{equation}

We define the \emph{moments} of $L$ with respect to the basis $\{\phi
_{n}(x)\}_{n\geq0}$ by%
\[
\nu_{n}=L\left[  \phi_{n}\right]  ,\quad n\in\mathbb{N}_{0}.
\]
From (\ref{rho}) and (\ref{phi-gamma}), we have%
\[
\nu_{n}=%
{\displaystyle\sum\limits_{x=n}^{\infty}}
\frac{\left(  \mathbf{a}\right)  _{x}}{\left(  \mathbf{b}+1\right)  _{x}}%
\frac{z^{x}}{\left(  x-n\right)  !}=%
{\displaystyle\sum\limits_{x=0}^{\infty}}
\frac{\left(  \mathbf{a}\right)  _{x+n}}{\left(  \mathbf{b}+1\right)  _{x+n}%
}\frac{z^{x+n}}{x!}.
\]
If we use the recurrence \cite[18:5:12]{MR2466333}%
\begin{equation}
\left(  x\right)  _{n+m}=\left(  x\right)  _{n}\left(  x+n\right)  _{m},\quad
n,m\in\mathbb{N}_{0},\label{poch2}%
\end{equation}
we get%
\begin{equation}
\nu_{n}=z^{n}\frac{\left(  \mathbf{a}\right)  _{n}}{\left(  \mathbf{b}%
+1\right)  _{n}}%
{\displaystyle\sum\limits_{x=0}^{\infty}}
\frac{\left(  \mathbf{a}+n\right)  _{x}}{\left(  \mathbf{b}+1+n\right)  _{x}%
}\frac{z^{x}}{x!}=z^{n}\frac{\left(  \mathbf{a}\right)  _{n}}{\left(
\mathbf{b}+1\right)  _{n}}\ _{p}F_{q}\left(
\begin{array}
[c]{c}%
\mathbf{a}+n\\
\mathbf{b}+1+n
\end{array}
;z\right)  ,\label{moment}%
\end{equation}
where$\ _{p}F_{q}$ denotes the \emph{generalized hypergeometric function}
defined by \cite[16.2]{MR2723248}%
\begin{equation}
\ _{p}F_{q}\left(
\begin{array}
[c]{c}%
a_{1},\ldots,a_{p}\\
b_{1},\ldots,b_{q}%
\end{array}
;z\right)  =%
{\displaystyle\sum\limits_{k=0}^{\infty}}
\frac{\left(  a_{1}\right)  _{k}\cdots\left(  a_{p}\right)  _{k}}{\left(
b_{1}\right)  _{k}\cdots\left(  b_{q}\right)  _{k}}\frac{z^{k}}{k!}%
.\label{series}%
\end{equation}
The moments of discrete semiclassical orthogonal polynomials of class $s\leq1$
were studied in \cite{MR3492864}.

\begin{remark}
\label{Hyper}The convergence of the series (\ref{series}) depends on the
values of $p$ and $q.$ Three different situations appear \cite[16.2]%
{MR2723248}:

\begin{enumerate}
\item If $p<q+1$, then $\ _{p}F_{q}$ is an entire function of $z.$

\item If $p=q+1$, then $\ _{p}F_{q}$ is analytic inside the unit circle,
$\left\vert z\right\vert <1,$ and can be extended by analytic continuation to
the cut plane $\mathbb{C}\setminus\lbrack1,\infty).$ Let
\begin{equation}
\gamma=b_{1}+\cdots+b_{q}-\left(  a_{1}+\cdots+a_{q+1}\right)  . \label{gamma}%
\end{equation}
On the unit circle $\left\vert z\right\vert =1,$ the series (\ref{series}) is

(i) absolutely convergent if $\operatorname{Re}\left(  \gamma\right)  >0,$

(ii) convergent except at $z=1$ if $\operatorname{Re}\left(  \gamma\right)
\in(-1,0],$

and

(iii) divergent if $\operatorname{Re}\left(  \gamma\right)  \leq-1.$

\item If $p>q+1,$ then $\ _{p}F_{q}$ diverges for all $z\neq0,$ up to
$a_{i}=-N,$ with $N\in\mathbb{N}$ for some $1\leq i\leq p.$ In this
case,$\ _{p}F_{q}$ becomes a polynomial of degree $N.$
\end{enumerate}
\end{remark}


\section{Modification of functionals}

In this section, we describe three ways in order to change the class of a
linear functional.

\subsection{Rational spectral transformations}

Let $L$ be a linear functional and $S\left(  t\right)  $ denote its Stieltjes
transform introduced in (\ref{Stieltjes}). A \emph{rational spectral
transformation} of $S\left(  t\right)  $ is defined by \cite{MR1482157}%
\[
\widetilde{L}\left[  \frac{1}{t-x}\right]  =\widetilde{S}\left(  t\right)
=\frac{A\left(  t\right)  S\left(  t\right)  +B\left(  t\right)  }{C\left(
t\right)  S\left(  t\right)  +D\left(  t\right)  },
\]
where $A\left(  t\right)  ,B\left(  t\right)  ,C\left(  t\right)  ,D\left(
t\right)  $ are polynomials such that $A(z)D(z)-B(z)C(z)\neq0.$ See also
\cite{MR2055354}, \cite{MR1140648}, \cite{MR1934914}, \cite{MR1199259},
\cite{MR0262764}, and \cite{MR1482157}, among others.

It was shown in \cite{MR1482157} what are the families of semiclassical
orthogonal polynomials related by \emph{spectral linear transformations} (with
$C=0)$. They can be written as a composition of two basic transformations:

\begin{enumerate}
\item The \emph{Christoffel transformation}, (see \cite{MR2324861}%
,\cite{MR2055354}, \cite{MR1579059}, \cite{MR1939588}) is defined by%
\begin{equation}
\frac{L_{C}\left[  r\right]  }{\lambda_{C}}=L\left[  \left(  x-\omega\right)
r\left(  x\right)  \right]  ,\quad r\in\mathbb{C}\left[  x\right]  ,
\label{LC}%
\end{equation}
or, equivalently, by%
\begin{equation}
\frac{S_{C}\left(  z\right)  }{\lambda_{C}}=\left(  z-\omega\right)  S\left(
z\right)  -L\left[  1\right]  , \label{S Chris}%
\end{equation}
where $L\left(  x-\omega\right)  \neq0, n\geq0,$ and%
\[
\lambda_{C}=\frac{L_{C}\left[  1\right]  }{L\left[  x-\omega\right]  }.
\]

\item The \emph{Geronimus transformation } (see \cite{MR3208415},
\cite{MR3264577}, \cite{MR0004339}, \cite{MR0004340}, \cite{MR1934914},
\cite{MR1105709}) is defined by%
\begin{equation}
\frac{L_{G}\left[  r\right]  }{\lambda_{G}}=L\left[  \frac{r\left(  x\right)
}{x-\omega}\right]  +Mr\left(  \omega\right)  ,\quad r\in\mathbb{C}\left[
x\right]  , \label{LG}%
\end{equation}
or, equivalently, by%
\begin{equation}
\left(  z-\omega\right)  \frac{S_{G}\left(  z\right)  }{\lambda_{G}}=S\left(
z\right)  -S\left(  \omega\right)  +M, \label{S ger}%
\end{equation}
with%
\[
\lambda_{G}=\frac{L_{G}\left[  1\right]  }{M-S\left(  \omega\right)  },
\]
where we assume that $S\left(  z\right)  $ is analytic at $z=\omega$ and
$M-S\left(  \omega\right)  \neq0.$
\end{enumerate}

On the other hand, the \emph{Uvarov transformation} (see \cite{MR2064407},
\cite{MR1199643}, \cite{MR0262764}) is defined by%
\begin{equation}
\frac{L_{U}\left[  r\right]  }{\lambda_{U}}=L\left[  r\right]  +Mr\left(
\omega\right)  ,\quad r\in\mathbb{C}\left[  x\right]  , \label{LU}%
\end{equation}
or by%
\begin{equation}
\frac{S_{U}\left(  z\right)  }{\lambda_{U}}=S\left(  z\right)  +\frac
{M}{z-\omega}, \label{S uva}%
\end{equation}
where $M+L\left[  1\right]  \neq0$ and%
\[
\lambda_{U}=\frac{L_{U}\left[  1\right]  }{L\left[  1\right]  +M}.
\]

Note that since%
\[
r\left(  x\right)  =%
{\displaystyle\sum\limits_{k=0}^{\deg\left(  r\right)  }}
\frac{r^{\left(  k\right)  }\left(  \omega\right)  }{k!}\left(  x-\omega
\right)  ^{k},\quad r\in\mathbb{C}\left[  x\right]  ,
\]
we can rewrite (\ref{LG}) as
\[
\frac{L_{G}\left[  r\right]  }{\lambda_{G}}=\left[  M-S\left(  \omega\right)
\right]  r\left(  \omega\right)  +%
{\displaystyle\sum\limits_{k=1}^{\deg\left(  r\right)  }}
\frac{r^{\left(  k\right)  }\left(  \omega\right)  }{k!}L\left[  \left(
x-\omega\right)  ^{k}\right]  ,
\]
which is well defined for all $r\in\mathbb{C}\left[  x\right]  .$

We denote by $L_{T_{2}T_{1}},S_{T_{2}T_{1}},\varrho_{T_{2}T_{1}},$ etc., the
object obtained by applying the transformation $T_{1}$ followed by the
transformation $T_{2}$ (in other words, by the composition $T_{2}\circ
T_{1}).$

Let us assume that $\lambda_{U}=\lambda_{C}=\lambda_{G}=1.$ If we apply a
Geronimus transformation followed by a Christoffel transformation, we get
\begin{equation}
L_{CG}\left[  r\right]  =L_{G}\left[  \left(  x-\omega\right)  r\left(
x\right)  \right]  =L\left[  \frac{\left(  x-\omega\right)  r\left(  x\right)
}{x-\omega}\right]  +M\left[  \left(  x-\omega\right)  r\left(  x\right)
\right]  _{x=\omega}=L\left[  r\right]  . \label{CG}%
\end{equation}
On the other hand, changing the order in the composition, we obtain%
\begin{equation}
L_{GC}\left[  r\right]  =L_{C}\left[  \frac{r\left(  x\right)  }{x-\omega
}\right]  +Mr\left(  \omega\right)  =L\left[  \frac{\left(  x-\omega\right)
r\left(  x\right)  }{x-\omega}\right]  +Mr\left(  \omega\right)  =L\left[
r\right]  +Mr\left(  \omega\right)  =L_{U}\left[  r\right]  . \label{GC}%
\end{equation}
Thus, the Christoffel and Geronimus transformations are almost inverses of
each other in the sense that $L_{CG}$ is the identity and $L_{GC}$ is an
Uvarov transformation.

\subsubsection{\strut Uvarov transformations}

Let $L$ be given by (\ref{Ldisc}) and consider the linear functional $L_{U}$
defined by (\ref{LU}) with
\[
L_{U}\left[  1\right]  =L\left[  1\right]  +M\neq0.
\]
It follows that the moments of $L_{U}$ with respect to the basis $\{\phi
_{n}(x)\}_{n\geq0}$ are
\begin{equation}
\nu_{n}^{U}=\nu_{n}+M\phi_{n}\left(  \omega\right)  , \quad n\in\mathbb{N}%
_{0}. \label{moment Dirac}%
\end{equation}

Using (\ref{S uva}) in (\ref{ST1}), we obtain%
\begin{equation}
\sigma\left(  t+1\right)  \left[  S_{U}\left(  t+1\right)  -\frac
{M}{t+1-\omega}\right]  -\eta\left(  t\right)  \left[  S_{U}\left(  t\right)
-\frac{M}{t-\omega}\right]  =\xi\left(  t\right)  . \label{STT}%
\end{equation}
Therefore, the Stieltjes transform of $L_{U}$ satisfies the difference
equation%
\[
\left(  t+1-\omega\right)  \left(  t-\omega\right)  \sigma\left(  t+1\right)
S_{U}\left(  t+1\right)  -\left(  t+1-\omega\right)  \left(  t-\omega\right)
\eta\left(  t\right)  S_{U}\left(  t\right)  =\xi_{U}\left(  t\right)  ,
\]
where%
\[
\xi_{U}\left(  t\right)  =\left(  t-\omega\right)  \left(  t-\omega+1\right)
\xi\left(  t\right)  +M\left[  \left(  t-\omega\right)  \sigma\left(
t+1\right)  -\left(  t-\omega+1\right)  \eta\left(  t\right)  \right]  ,
\]
and the modified weight function $\varrho_{U}\left(  x\right)  $ satisfies the
Pearson equation%
\[
\frac{\varrho_{U}\left(  x+1\right)  }{\varrho_{U}\left(  x\right)  }%
=\frac{\left(  x-\omega\right)  \left(  x+1-\omega\right)  \eta\left(
x\right)  }{\left(  x-\omega\right)  \left(  x+1-\omega\right)  \sigma\left(
x+1\right)  }.
\]
We conclude that if $L$ was of class $s,$ then $L_{U}$ will be of class at
most $s+2.$

Notice that we have two special cases to consider:

\begin{enumerate}
\item If $\sigma\left(  \omega\right)  =0$ and $\eta\left(  \omega\right)
\neq0,$ with $\sigma\left(  x\right)  =\left(  x-\omega\right)  \sigma
_{1}\left(  x\right)  ,$ then%
\[
\frac{\varrho_{U}\left(  x+1\right)  }{\varrho_{U}\left(  x\right)  }%
=\frac{\left(  x-\omega\right)  \left(  x+1-\omega\right)  \eta\left(
x\right)  }{\left(  x-\omega\right)  \left(  x+1-\omega\right)  \sigma\left(
x+1\right)  }=\frac{\left(  x-\omega\right)  \eta\left(  x\right)  }{\left(
x-\omega\right)  \sigma\left(  x+1\right)  },
\]%
\begin{equation}
\sigma\left(  t+1\right)  S_{U}\left(  t+1\right)  -\eta\left(  t\right)
S_{U}\left(  t\right)  =\xi\left(  t\right)  +M\left[  \sigma_{1}\left(
t+1\right)  -\frac{\eta\left(  t\right)  }{t-\omega}\right]  ,
\label{reduced1}%
\end{equation}
and $L_{U}$ will be of class $s+1.$

\item If $\sigma\left(  \omega\right)  \neq0$ and $\eta\left(  \omega\right)
=0,$ with $\eta\left(  x\right)  =\left(  x-\omega\right)  \eta_{1}\left(
x\right)  ,$ then%
\[
\frac{\varrho_{U}\left(  x+1\right)  }{\varrho_{U}\left(  x\right)  }%
=\frac{\left(  x-\omega\right)  \left(  x+1-\omega\right)  \eta\left(
x\right)  }{\left(  x-\omega\right)  \left(  x+1-\omega\right)  \sigma\left(
x+1\right)  }=\frac{\left(  x+1-\omega\right)  \eta\left(  x\right)  }{\left(
x+1-\omega\right)  \sigma\left(  x+1\right)  },
\]%
\[
\sigma\left(  t+1\right)  S_{U}\left(  t+1\right)  -\eta\left(  t\right)
S_{U}\left(  t\right)  =\xi\left(  t\right)  +M\left[  \frac{\sigma\left(
t+1\right)  }{t+1-\omega}-\eta_{1}\left(  t\right)  \right]  ,
\]
and $L_{U}$ will be of class $s+1.$
\end{enumerate}

Perturbations of discrete semiclassical functionals by Dirac masses were
studied in \cite{MR2061464}, \cite{MR2064407}, \cite{MR1340932},
\cite{MR1467146}, as well as in \cite{MR1379116}, \cite{MR1353079}, and
\cite{MR1410602}.

\subsubsection{Christoffel transformations}

Let $L$ be given by (\ref{Ldisc}), and consider the functional $L_{C}$ defined
by (\ref{LC}), with
\[
L_{C}\left[  1\right]  =L\left[  x-\omega\right]  \neq0.
\]
It follows that the moments of $L_{C}$ with respect to the basis $\{\phi
_{n}(x)\}_{n\geq0}$ are
\begin{equation}
\nu_{n}^{C}=\nu_{n+1}+\left(  n-\omega\right)  \nu_{n}, \quad n\in
\mathbb{N}_{0} \label{Moments C}%
\end{equation}
since we see from (\ref{phidef}) that%
\begin{equation}
\phi_{n+1}\left(  x\right)  =\left(  x-n\right)  \phi_{n}\left(  x\right)
=\left(  x-\omega\right)  \phi_{n}\left(  x\right)  +\left(  \omega-n\right)
\phi_{n}\left(  x\right)  . \label{req phi}%
\end{equation}

Using (\ref{S Chris}) in (\ref{ST1}), we obtain%
\[
\sigma\left(  t+1\right)  \frac{S_{C}\left(  t+1\right)  +\nu_{0}}{t+1-\omega
}-\eta\left(  t\right)  \frac{S_{C}\left(  t\right)  +\nu_{0}}{t-\omega}%
=\xi\left(  t\right)  .
\]
Therefore, the Stieltjes transform of $L_{C}$ satisfies the difference
equation%
\[
\left(  t-\omega\right)  \sigma\left(  t+1\right)  S_{C}\left(  t+1\right)
-\left(  t+1-\omega\right)  \eta\left(  t\right)  S_{C}\left(  t\right)
=\xi_{C}\left(  t\right)  ,
\]
where%
\[
\xi_{C}\left(  t\right)  =\left(  t-\omega\right)  \left(  t-\omega+1\right)
\xi\left(  t\right)  -\left[  \left(  t-\omega\right)  \sigma\left(
t+1\right)  -\left(  t-\omega+1\right)  \eta\left(  t\right)  \right]  \nu
_{0},
\]
and the modified weight function $\varrho_{C}\left(  x\right)  $ satisfies the
Pearson equation%
\[
\frac{\varrho_{C}\left(  x+1\right)  }{\varrho_{C}\left(  x\right)  }%
=\frac{\left(  x+1-\omega\right)  \eta\left(  x\right)  }{\left(
x-\omega\right)  \sigma\left(  x+1\right)  }.
\]
We conclude that if $L$ was of class $s,$ then $L_{C}$ will be of class at
most $s+1.$

Note that we have two special cases to consider:

\begin{enumerate}
\item If $\sigma\left(  \omega\right)  =0$ and $\eta\left(  \omega\right)
\neq0,$ with $\sigma\left(  x\right)  =\left(  x-\omega\right)  \sigma
_{1}\left(  x\right)  ,$ then%
\[
\frac{\varrho_{C}\left(  x+1\right)  }{\varrho_{C}\left(  x\right)  }%
=\frac{\eta\left(  x\right)  }{\left(  x-\omega\right)  \sigma_{1}\left(
x+1\right)  },
\]%
\[
\left(  t-\omega\right)  \sigma_{1}\left(  t+1\right)  S_{C}\left(
t+1\right)  -\eta\left(  t\right)  S_{C}\left(  t\right)  =\left(
t-\omega\right)  \xi\left(  t\right)  -\left[  \left(  t-\omega\right)
\sigma_{1}\left(  t+1\right)  -\eta\left(  t\right)  \right]  \nu_{0},
\]
and $L_{C}$ could be of class $s.$

\item If $\sigma\left(  \omega\right)  \neq0$ and $\eta\left(  \omega\right)
=0,$ with $\eta\left(  x\right)  =\left(  x-\omega\right)  \eta_{1}\left(
x\right)  ,$ then%
\[
\frac{\varrho_{C}\left(  x+1\right)  }{\varrho_{C}\left(  x\right)  }%
=\frac{\left(  x+1-\omega\right)  \eta_{1}\left(  x\right)  }{\sigma\left(
x+1\right)  },
\]%
\[
\sigma\left(  t+1\right)  S_{C}\left(  t+1\right)  -\left(  t+1-\omega\right)
\eta_{1}\left(  t\right)  S_{C}\left(  t\right)  =\left(  t-\omega+1\right)
\xi\left(  t\right)  -\left[  \sigma\left(  t+1\right)  -\left(
t-\omega+1\right)  \eta_{1}\left(  t\right)  \right]  \nu_{0},
\]
and $L_{C}$ could be of class $s.$
\end{enumerate}

Christoffel transformations in the discrete case have been studied in
\cite{MR1939588}.

\subsubsection{Geronimus transformations}

Let $L$ be given by (\ref{Ldisc}), and consider the functional $L_{G}$ defined
by (\ref{LG}), with%
\[
L_{G}\left[  1\right]  =M-S\left(  \omega\right)  \neq0.
\]
Since
\[
S\left(  \omega\right)  =%
{\displaystyle\sum\limits_{x=0}^{\infty}}
\frac{1}{\omega-x}\frac{\left(  \mathbf{a}\right)  _{x}}{\left(
\mathbf{b}+1\right)  _{x}}\frac{z^{x}}{x!}%
\]
is a meromorphic function, we need $\omega\notin\mathbb{N}_{0}$ for
$L_{G}\left[  1\right]  $ to be well defined.

It follows from (\ref{LG}) that the moments of $L_{G}$ with respect to the
basis $\{\phi_{n}(x)\}_{n\geq0}$ satisfy%
\begin{equation}
\nu_{n+1}^{G}+\left(  n-\omega\right)  \nu_{n}^{G}=\nu_{n}, \quad
n\in\mathbb{N}_{0}, \label{DE MG}%
\end{equation}
since from (\ref{req phi}) we see that
\[
L_{G}\left[  \phi_{n+1}-\left(  \omega-n\right)  \phi_{n}\right]
=L_{G}\left[  \left(  x-\omega\right)  \phi_{n}\left(  x\right)  \right]
=L\left[  \phi_{n}\right]  .
\]
As it is well known, the general solution of the initial value problem%
\[
y_{n+1}=c_{n}y_{n}+g_{n},\quad y_{n_{0}}=y_{0},
\]
is \cite[1.2.4]{MR2128146}%
\[
y_{n}=y_{0}%
{\displaystyle\prod\limits_{j=n_{0}}^{n-1}}
c_{j}+%
{\displaystyle\sum\limits_{k=n_{0}}^{n-1}}
\left(  g_{k}%
{\displaystyle\prod\limits_{j=k+1}^{n-1}}
c_{j}\right)  .
\]
Solving (\ref{DE MG}) with $\nu_{0}^{G}=M-S\left(  \omega\right)  ,$ we get
\[
\nu_{n}^{G}=\phi_{n}\left(  \omega\right)  \left[  \nu_{0}^{G}+%
{\displaystyle\sum\limits_{k=0}^{n-1}}
\frac{\nu_{k}}{\phi_{k+1}\left(  \omega\right)  }\right]  ,
\]
since from (\ref{phidef}) we have%
\[%
{\displaystyle\prod\limits_{j=0}^{n-1}}
\left(  \omega-j\right)  =\phi_{n}\left(  \omega\right)  ,\quad%
{\displaystyle\prod\limits_{j=k+1}^{n-1}}
\left(  \omega-j\right)  =\frac{\phi_{n}\left(  \omega\right)  }{\phi
_{k+1}\left(  \omega\right)  }.
\]

Replacing (\ref{S ger}) in (\ref{ST1}), we obtain%
\[
\sigma\left(  t+1\right)  \left[  \left(  t+1-\omega\right)  S_{G}\left(
t+1\right)  -\nu_{0}^{G}\right]  -\eta\left(  t\right)  \left[  \left(
t-\omega\right)  S_{G}\left(  t\right)  -\nu_{0}^{G}\right]  =\xi\left(
t\right)  .
\]
Therefore, the Stieltjes transform of $L_{G}$ satisfies the difference
equation%
\[
\left(  t+1-\omega\right)  \sigma\left(  t+1\right)  S_{G}\left(  t+1\right)
-\left(  t-\omega\right)  \eta\left(  t\right)  S_{G}\left(  t\right)
=\xi_{G}\left(  t\right)  ,
\]
where%
\[
\xi_{G}\left(  t\right)  =\xi\left(  t\right)  +\left[  \sigma\left(
t+1\right)  -\eta\left(  t\right)  \right]  \nu_{0}^{G},
\]
and the modified weight function $\varrho_{G}\left(  x\right)  $ satisfies the
Pearson equation%
\[
\frac{\varrho_{G}\left(  x+1\right)  }{\varrho_{G}\left(  x\right)  }%
=\frac{\left(  x-\omega\right)  \eta\left(  x\right)  }{\left(  x+1-\omega
\right)  \sigma\left(  x+1\right)  }.
\]
We conclude that if $L$ was of class $s,$ then $L_{G}$ will be of class at
most $s+1.$

\begin{remark}
For all these transformations, it is understood that%
\begin{align*}
\eta\left(  \omega\right)   &  =0\Rightarrow\frac{\eta\left(  x\right)
}{x-\omega}\times\left(  x-\omega\right)  =\eta\left(  x\right)  ,\\
\sigma\left(  \omega\right)   &  =0\Rightarrow\frac{x+1-\omega}{\sigma\left(
x+1\right)  }\times\frac{1}{x+1-\omega}=\frac{1}{\sigma\left(  x+1\right)  }.
\end{align*}
Thus, if we write%
\[
\frac{\varrho_{G}\left(  x+1\right)  }{\varrho_{G}\left(  x\right)  }%
=\frac{\left(  x-\omega\right)  \eta\left(  x\right)  }{\left(  x+1-\omega
\right)  \sigma\left(  x+1\right)  }=\frac{\eta_{G}\left(  x\right)  }%
{\sigma_{G}\left(  x+1\right)  },
\]
then clearly $\eta_{G}\left(  \omega\right)  =0=\sigma_{G}\left(
\omega\right)  .$ Hence,%
\[
\frac{\varrho_{CG}\left(  x+1\right)  }{\varrho_{CG}\left(  x\right)  }%
=\frac{\eta_{G}\left(  x\right)  }{\sigma_{G}\left(  x+1\right)  }\times
\frac{x-\omega}{x+1-\omega}=\frac{\eta\left(  x\right)  }{\sigma\left(
x+1\right)  }=\frac{\varrho\left(  x+1\right)  }{\varrho\left(  x\right)  },
\]
in agreement with (\ref{CG}).

On the other hand, if we write%
\[
\frac{\varrho_{C}\left(  x+1\right)  }{\varrho_{C}\left(  x\right)  }%
=\frac{\left(  x+1-\omega\right)  \eta\left(  x\right)  }{\left(
x-\omega\right)  \sigma\left(  x+1\right)  }=\frac{\eta_{C}\left(  x\right)
}{\sigma_{C}\left(  x+1\right)  },
\]
then $\eta_{G}\left(  \omega\right)  \sigma_{G}\left(  \omega\right)  \neq0,$
and therefore%
\begin{align*}
\frac{\varrho_{GC}\left(  x+1\right)  }{\varrho_{GC}\left(  x\right)  }  &
=\frac{\eta_{C}\left(  x\right)  }{\sigma_{C}\left(  x+1\right)  }\times
\frac{x+1-\omega}{x-\omega}\\
&  =\frac{\left(  x-\omega\right)  \left(  x+1-\omega\right)  \eta\left(
x\right)  }{\left(  x-\omega\right)  \left(  x+1-\omega\right)  \sigma\left(
x+1\right)  }=\frac{\varrho_{U}\left(  x+1\right)  }{\varrho_{U}\left(
x\right)  },
\end{align*}
in agreement with (\ref{GC}).
\end{remark}

\subsection{Truncated linear functionals}

Let $N\in\mathbb{N}_{0}$ and define the \emph{truncated functional} $L_{T}$ of
the linear functional $L$ by
\begin{equation}
L_{T}\left[  r\right]  =%
{\displaystyle\sum\limits_{x=0}^{N}}
r\left(  x\right)  \varrho\left(  x\right)  ,\quad r\in\mathbb{C}\left[
x\right]  , \label{L trunc}%
\end{equation}
where $\varrho\left(  x\right)  $ satisfies (\ref{pearson}) with $\eta\left(
N\right)  \neq0.$ We define the truncated weight function $\varrho_{T}\left(
x\right)  $ by%
\begin{equation}
\varrho_{T}\left(  x\right)  =\varrho\left(  x\right)  \left[  1-H\left(
x-N\right)  \right]  , \label{rho trunc}%
\end{equation}
where $H\left(  x\right)  $ is the Heaviside function \cite[1.16(iv)]%
{MR2723248} defined by%
\[
H\left(  x\right)  =\left\{
\begin{array}
[c]{c}%
1,\quad x>0\\
0,\quad x\leq0
\end{array}
\right.  .
\]
Multiplying (\ref{rho trunc}) by $\frac{1}{\left(  N-x\right)  !}$, we obtain%
\begin{equation}
\frac{1}{\left(  N-x\right)  !}\varrho_{T}\left(  x\right)  =\frac{1}{\left(
N-x\right)  !}\varrho\left(  x\right)  ,\quad x\in\mathbb{N}_{0}.
\label{rho-trunc1}%
\end{equation}

Using (\ref{rho-trunc1}) in (\ref{pearson}), we get%
\[
\left(  N-x\right)  \frac{\varrho_{T}\left(  x+1\right)  }{\varrho_{T}\left(
x\right)  }=\frac{\frac{1}{\left(  N-x-1\right)  !}\varrho_{T}\left(
x+1\right)  }{\frac{1}{\left(  N-x\right)  !}\varrho_{T}\left(  x\right)
}=\frac{\frac{1}{\left(  N-x-1\right)  !}\varrho\left(  x+1\right)  }{\frac
{1}{\left(  N-x\right)  !}\varrho\left(  x\right)  }=\left(  N-x\right)
\frac{\eta\left(  x\right)  }{\sigma\left(  x+1\right)  }.
\]
We conclude that the modified weight function $\varrho_{T}\left(  x\right)  $
satisfies the Pearson equation%
\begin{equation}
\frac{\varrho_{T}\left(  x+1\right)  }{\varrho_{T}\left(  x\right)  }%
=\frac{\eta\left(  x\right)  \left(  x-N\right)  }{\sigma\left(  x+1\right)
\left(  x-N\right)  }, \label{Pearson trunc}%
\end{equation}
and $L_{T}$ will be of class $s+1.$

The moments of $L_{T}$ with respect to the basis $\{\phi_{n}(x)\}_{n\geq0}$
are
\[
\nu_{n}^{T}\left(  z\right)  =%
{\displaystyle\sum\limits_{x=0}^{N}}
\phi_{n}\left(  x\right)  \frac{\left(  \mathbf{a}\right)  _{x}}{\left(
\mathbf{b}+1\right)  _{x}}\frac{z^{x}}{x!}, \quad n\in\mathbb{N}_{0}.
\]
A similar calculation leading to (\ref{moment}), gives%
\begin{equation}
\nu_{n}^{T}\left(  z\right)  =z^{n}\frac{\left(  \mathbf{a}\right)  _{n}%
\ }{\left(  \mathbf{b}+1\right)  _{n}}%
{\displaystyle\sum\limits_{x=0}^{N-n}}
\frac{\left(  \mathbf{a}+n\right)  _{x}}{\left(  \mathbf{b}+1+n\right)  _{x}%
}\frac{z^{x}}{x!}, \quad n\in\mathbb{N}_{0}. \label{trunc moment}%
\end{equation}
Using the identity \cite[16.2.4]{MR2723248}%
\[%
{\displaystyle\sum\limits_{x=0}^{K}}
\frac{\left(  \mathbf{a}\right)  _{x}}{\left(  \mathbf{b}+1\right)  _{x}}%
\frac{z^{x}}{x!}=\frac{\left(  \mathbf{a}\right)  _{K}\ }{\left(
\mathbf{b}+1\right)  _{K}}\frac{z^{K}}{K!}\ _{q+2}F_{p}\left[
\begin{array}
[c]{c}%
-K,1,\ -K-\mathbf{b}\\
1-K-\ \mathbf{a}%
\end{array}
;\frac{\left(  -1\right)  ^{p+q+1}}{z}\right]  ,\quad K\in\mathbb{N},
\]
we can rewrite (\ref{trunc moment}) as%
\begin{equation}
\nu_{n}^{T}\left(  z\right)  =\frac{\left(  \mathbf{a}\right)  _{N}\ }{\left(
\mathbf{b}+1\right)  _{N}}\frac{z^{N}}{\left(  N-n\right)  !}\ _{q+2}%
F_{p}\left[
\begin{array}
[c]{c}%
n-N,1,\ -N-\mathbf{b}\\
1-N-\ \mathbf{a}%
\end{array}
;\frac{\left(  -1\right)  ^{p+q+1}}{z}\right]  . \label{moment truncated}%
\end{equation}

\subsection{Symmetrized functionals}

Let $\varrho\left(  x\right)  $ be given by (\ref{rho}) and $m\in\mathbb{N}$.
We consider the symmetric weight function $\varrho_{\varsigma}\left(
x\right)  $ defined by%
\begin{equation}
\varrho_{\varsigma}\left(  x\right)  =C\varrho\left(  x+m\right)
\varrho\left(  -x+m\right)  , \label{rho symm}%
\end{equation}
where $C$ is a constant factor to be determined. Replacing (\ref{pearson}) in
(\ref{rho symm}), we get
\[
\frac{\varrho_{\varsigma}\left(  x+1\right)  }{\varrho_{\varsigma}\left(
x\right)  }=\frac{\varrho\left(  x+1+m\right)  }{\varrho\left(  x+m\right)
}\frac{\varrho\left(  -x-1+m\right)  }{\varrho\left(  -x+m\right)  }%
=\frac{\eta\left(  x+m\right)  }{\sigma\left(  x+m+1\right)  }\frac
{\sigma\left(  -x+m\right)  }{\eta\left(  -x-1+m\right)  }.
\]
Therefore,%
\[
\frac{\varrho_{\varsigma}\left(  x+1\right)  }{\varrho_{\varsigma}\left(
x\right)  }=\frac{x+m+\mathbf{a}}{\left(  x+m+\mathbf{b}+1\right)  \left(
x+m+1\right)  }\frac{\left(  m+\mathbf{b}-x\right)  \left(  m-x\right)
}{m+\mathbf{a}-x-1},
\]
or%
\begin{equation}
\frac{\varrho_{\varsigma}\left(  x+1\right)  }{\varrho_{\varsigma}\left(
x\right)  }=\frac{\left(  -1\right)  ^{p+q+1}\left(  x+m+\mathbf{a}\right)
\left(  x-m-\mathbf{b}\right)  \left(  x-m\right)  }{\left(  x+1-m-\mathbf{a}%
\right)  \left(  x+1+m+\mathbf{b}\right)  \left(  x+1+m\right)  }.
\label{rho pear symm}%
\end{equation}

Note that, from (\ref{rho pear symm}), we have $\varrho_{\varsigma}\left(
x\right)  =\varrho_{1}\left(  x+m\right)  ,$ with%
\[
\frac{\varrho_{1}\left(  x+1\right)  }{\varrho_{1}\left(  x\right)  }%
=\frac{\left(  -1\right)  ^{p+q+1}\left(  x+\mathbf{a}\right)  \left(
x-2m-\mathbf{b}\right)  \left(  x-2m\right)  }{\left(  x+1-2m-\mathbf{a}%
\right)  \left(  x+1+\mathbf{b}\right)  \left(  x+1\right)  }.
\]
If we write $N=2m,$ then%
\[
\frac{\varrho_{1}\left(  x+1\right)  }{\varrho_{1}\left(  x\right)  }%
=\frac{\left(  -1\right)  ^{p+q+1}\left(  x+\mathbf{a}\right)  \left(
x-N-\mathbf{b}\right)  \left(  x-N\right)  }{\left(  x+1-N-\mathbf{a}\right)
\left(  x+1+\mathbf{b}\right)  \left(  x+1\right)  },
\]
and it follows that $\varrho_{1}\left(  x\right)  $ (and $\varrho_{\varsigma
})$ is of class $s_{1}$ with%
\[
s_{1}=\left\{
\begin{array}
[c]{c}%
p+q,\quad\text{if }q+p\text{ is even}\\
p+q-1,\quad\text{if }q+p\text{ is odd}%
\end{array}
\right.  .
\]

If $\mathbf{a}=\left(  \mathbf{a}_{1},-N\right)  ,$ then%
\[
\frac{\varrho_{1}\left(  x+1\right)  }{\varrho_{1}\left(  x\right)  }%
=\frac{\left(  -1\right)  ^{q+1+p}\left(  x+\mathbf{a}_{1}\right)  }{\left(
x+1+\mathbf{b}\right)  \left(  x+1\right)  }\frac{x-N}{x+1}\frac{\left(
x-N-\mathbf{b}\right)  \left(  x-N\right)  }{x+1-N-\mathbf{a}_{1}},
\]
and since the term $\frac{x-N}{x+1}$ is repeated, we consider the reduced
Pearson equation%
\begin{align*}
\frac{\varrho_{2}\left(  x+1\right)  }{\varrho_{2}\left(  x\right)  }  &
=\frac{\left(  -1\right)  ^{q+1+p}\left(  x+\mathbf{a}_{1}\right)  }{\left(
x+1+\mathbf{b}\right)  \left(  x+1\right)  }\frac{\left(  x-N-\mathbf{b}%
\right)  \left(  x-N\right)  }{x+1-N-\mathbf{a}_{1}}\\
&  =\frac{\left(  -1\right)  ^{q+1+p}\left(  x+\mathbf{a}\right)  }{\left(
x+1+\mathbf{b}\right)  }\frac{\left(  x-N-\mathbf{b}\right)  }%
{x+1-N-\mathbf{a}},
\end{align*}
and $\varrho_{\varsigma}\left(  x\right)  =\varrho_{2}\left(  x+m\right)  .$
In this case $\varrho_{2}\left(  x\right)  $ (and $\varrho_{\varsigma})$ is of
class $s_{2}$ with%
\[
s_{2}=\left\{
\begin{array}
[c]{c}%
p+q-1,\quad\text{if }q+p\text{ is even}\\
p+q-2,\quad\text{if }q+p\text{ is odd}%
\end{array}
\right.  .
\]
As a conclusion, if $\varrho\left(  x\right)  $ satisfies (\ref{pearson}),
then we have $\varrho_{\varsigma}\left(  x\right)  =\rho\left(  x+m\right)  $
where%
\[
\rho\left(  x\right)  =\left\{
\begin{array}
[c]{c}%
\dfrac{\left(  \mathbf{a}\right)  _{x}}{\left(  \mathbf{b}+1\right)  _{x}%
}\dfrac{\left(  z_{0}\right)  ^{x}}{x!}\dfrac{\left(  -N\right)  _{x}\left(
-N-\mathbf{b}\right)  _{x}}{\left(  1-N-\mathbf{a}\right)  _{x}},\quad
\eta\left(  N\right)  \neq0,\\
\dfrac{\left(  \mathbf{a}\right)  _{x}}{\left(  \mathbf{b}+1\right)  _{x}%
}\left(  z_{0}\right)  ^{x}\dfrac{\left(  -N-\mathbf{b}\right)  _{x}}{\left(
1-N-\mathbf{a}\right)  _{x}},\quad\eta\left(  N\right)  =0,
\end{array}
\right.
\]
and%
\[
z_{0}=\left(  -1\right)  ^{p+q+1},\quad N=2m.
\]

We define the \emph{symmetrized functional} $L_{\varsigma}$ of $L_{\rho}$ by%
\[
L_{\varsigma}\left[  r\right]  =%
{\displaystyle\sum\limits_{x=-m}^{m}}
r\left(  x\right)  \varrho_{\varsigma}\left(  x\right)  =%
{\displaystyle\sum\limits_{x=0}^{N}}
r\left(  x-m\right)  \rho\left(  x\right)  =L_{\rho}\left[  r\left(
x-m\right)  \right]  .
\]
Hence, the moments of $L_{\varsigma}$ on the basis
\[
\overline{\phi}_{n}\left(  x\right)  =\phi_{n}\left(  x+m\right)  , \quad
n\in\mathbb{N}_{0},
\]
are
\begin{equation}
\nu_{n}^{\varsigma}=%
{\displaystyle\sum\limits_{x=-m}^{m}}
\overline{\phi}_{n}\left(  x\right)  \varrho_{\varsigma}\left(  x\right)  =%
{\displaystyle\sum\limits_{x=0}^{N}}
\phi_{n}\left(  x\right)  \rho\left(  x\right)  =L_{\rho}\left[  \phi
_{n}\right]  , \quad n\in\mathbb{N}_{0} , \label{moments symm}%
\end{equation}
and the Stieltjes transform of $L_{\varsigma}$ is
\[
S_{\varsigma}(t)=%
{\displaystyle\sum\limits_{x=-m}^{m}}
\frac{\varrho_{\varsigma}\left(  x\right)  }{t-x}=%
{\displaystyle\sum\limits_{x=0}^{N}}
\frac{\rho\left(  x\right)  }{t+m-x}=S_{\rho}(t+m).
\]

It follows that if the Stieltjes transform of $L_{\rho}$ satisfies the
difference equation%
\[
\sigma_{\rho}\left(  t+1\right)  S_{\rho}\left(  t+1\right)  -\eta_{\rho
}\left(  t\right)  S_{\rho}\left(  t\right)  =\xi_{\rho}\left(  t\right)  ,
\]
then
\[
\sigma_{\rho}\left(  t+m+1\right)  S_{\rho}\left(  t+m+1\right)  -\eta_{\rho
}\left(  t+m\right)  S_{\rho}(t+m)=\xi_{\rho}\left(  t+m\right)  .
\]
Therefore%
\begin{equation}
\sigma_{\varsigma}\left(  t+1\right)  S_{\varsigma}\left(  t+1\right)
-\eta_{\varsigma}\left(  t\right)  S_{\varsigma}(t)=\xi_{\rho}\left(
t+m\right)  , \label{DE S  symm}%
\end{equation}
since%
\[
\frac{\eta_{\varsigma}\left(  x\right)  }{\sigma_{\varsigma}\left(
x+1\right)  }=\frac{\varrho_{\varsigma}\left(  x+1\right)  }{\varrho
_{\varsigma}\left(  x\right)  }=\frac{\rho\left(  x+m+1\right)  }{\rho\left(
x+m\right)  }=\frac{\eta_{\rho}\left(  x+m\right)  }{\sigma_{\rho}\left(
x+m+1\right)  }.
\]

Symmetric orthogonal polynomials of a discrete variable were studied in
\cite{MR3360482}, \cite{MR2033351}, \cite{MR2993855}, \cite{MR3173503}.

\subsection{\strut Summary}

Let's list all the transformations that we will use in the sequel. For
simplicity, we will not consider compositions of them.

\begin{enumerate}
\item Uvarov transformation$:s\rightarrow s+2$%
\[
\frac{\varrho_{U}\left(  x+1\right)  }{\varrho_{U}\left(  x\right)  }%
=\frac{\eta\left(  x\right)  }{\sigma\left(  x+1\right)  }\times\frac{\left(
x-\omega\right)  \left(  x+1-\omega\right)  }{\left(  x-\omega\right)  \left(
x+1-\omega\right)  },\quad\eta\left(  \omega\right)  \sigma\left(
\omega\right)  \neq0.
\]

\item Reduced Uvarov transformation$:s\rightarrow s+1$

\begin{description}
\item[(a)]
\[
\frac{\varrho_{U}\left(  x+1\right)  }{\varrho_{U}\left(  x\right)  }%
=\frac{\eta\left(  x\right)  }{\sigma\left(  x+1\right)  }\times
\frac{x+1-\omega}{x+1-\omega},\quad\eta\left(  \omega\right)  =0.
\]

\item[(b)]
\[
\frac{\varrho_{U}\left(  x+1\right)  }{\varrho_{U}\left(  x\right)  }%
=\frac{\eta\left(  x\right)  }{\sigma\left(  x+1\right)  }\times\frac
{x-\omega}{x-\omega},\quad\sigma\left(  \omega\right)  =0.
\]

\end{description}

\item Christoffel transformation: $s\rightarrow s+1$%
\[
\frac{\varrho_{C}\left(  x+1\right)  }{\varrho_{C}\left(  x\right)  }%
=\frac{\eta\left(  x\right)  }{\sigma\left(  x+1\right)  }\times
\frac{x+1-\omega}{x-\omega},\quad\eta\left(  \omega\right)  \sigma\left(
\omega\right)  \neq0.
\]

\item Geronimus transformation: $s\rightarrow s+1$%
\[
\frac{\varrho_{G}\left(  x+1\right)  }{\varrho_{G}\left(  x\right)  }%
=\frac{\eta\left(  x\right)  }{\sigma\left(  x+1\right)  }\times\frac
{x-\omega}{x+1-\omega},\quad\eta\left(  \omega-1\right)  \sigma\left(
\omega+1\right)  \neq0.
\]

\item Truncation at $x=N:$ $s\rightarrow s+1$%
\[
\frac{\varrho_{T}\left(  x+1\right)  }{\varrho_{T}\left(  x\right)  }%
=\frac{\eta\left(  x\right)  }{\sigma\left(  x+1\right)  }\times\frac
{x-N}{x-N},\quad\eta\left(  N\right)  \neq0.
\]

\item Symmetrization on the interval $\left[  -m,m\right]  :$

\begin{description}
\item[(a)]
\[
\frac{\varrho_{\varsigma}\left(  x+1\right)  }{\varrho_{\varsigma}\left(
x\right)  }=\frac{\eta\left(  m+x\right)  }{\sigma\left(  m+x+1\right)
}\times\frac{\sigma\left(  m-x\right)  }{\eta\left(  m-x-1\right)  },\quad
\eta\left(  2m\right)  \neq0.
\]

\item[(b)]
\begin{equation}
\frac{\varrho_{\varsigma}\left(  x+1\right)  }{\varrho_{\varsigma}\left(
x\right)  }=\frac{\eta\left(  m+x\right)  }{\sigma\left(  m+x+1\right)
}\times\frac{\sigma_{1}\left(  m-x\right)  }{\eta_{1}\left(  m-x-1\right)
},\quad\eta\left(  2m\right)  =0, \label{symm2}%
\end{equation}
where%
\[
\eta\left(  x\right)  =\left(  x-2m\right)  \eta_{1}\left(  x\right)
,\quad\sigma\left(  x\right)  =x\sigma_{1}\left(  x\right)  .
\]

\end{description}
\end{enumerate}

\section{\strut{\protect\LARGE Semiclassical polynomials of class 0
(}{\protect\Large classical polynomials)}}

In this section, we consider the families of discrete classical polynomials.
We have $3$ main cases, corresponding to $\left(  p,q\right)  =\left(
0,0\right)  ,\left(  1,0\right)  ,\left(  2,1\right)  .$ There are also $3$
symmetrized subcases.

We use the notation $\left(  p,q;N\right)  $ to denote the family such that
one of the parameters in the numerator is a non-negative integer, and $\left(
p,q;N,1\right)  $ if in addition the value of $z$ is equal to $1.$

For each polynomial, we list the linear functional, the Pearson equation
satisfied by the weight function, the moments computed from (\ref{moment}),
and the difference equation satisfied by the Stieltjes transform, using
(\ref{xi}) and (\ref{DE S symm}).

\subsection{$(0,0):$ Charlier polynomials}

\strut Linear functional%
\[
L\left[  r\right]  =%
{\displaystyle\sum\limits_{x=0}^{\infty}}
r\left(  x\right)  \frac{z^{x}}{x!}.
\]

Pearson equation%
\[
\frac{\varrho\left(  x+1\right)  }{\varrho\left(  x\right)  }=\frac{z}{x+1}.
\]

Moments%
\[
\nu_{n}\left(  z\right)  =z^{n}\ _{0}F_{0}\left[
\begin{array}
[c]{c}%
-\\
-
\end{array}
;z\right]  =z^{n}e^{z}, \quad n\in\mathbb{N}_{0}.
\]

Stieltjes transform difference equation
\[
\left(  t+1\right)  S\left(  t+1\right)  -zS\left(  t\right)  =\nu_{0}.
\]

\subsection{$(1,0):$ Meixner polynomials}

\strut Linear functional%
\[
L\left[  r\right]  =%
{\displaystyle\sum\limits_{x=0}^{\infty}}
r\left(  x\right)  \ \left(  a\right)  _{x}\frac{z^{x}}{x!}.
\]

Pearson equation%
\[
\frac{\varrho\left(  x+1\right)  }{\varrho\left(  x\right)  }=\frac{z\left(
x+a\right)  }{x+1}.
\]

Moments%
\[
\nu_{n}\left(  z\right)  =z^{n}\left(  a\right)  _{n}\ _{1}F_{0}\left[
\begin{array}
[c]{c}%
a+n\\
-
\end{array}
;z\right]  =z^{n}\left(  a\right)  _{n}\left(  1-z\right)  ^{-a-n}, \quad
n\in\mathbb{N}_{0},
\]
where we choose the principal branch $z\in\mathbb{C}\setminus\lbrack
1,\infty).$

Stieltjes transform difference equation
\[
\left(  t+1\right)  S\left(  t+1\right)  -z\left(  t+a\right)  S(t)=\left(
1-z\right)  \nu_{0}.
\]

\subsection{$(1,0;N):$ Krawtchouk polynomials}

These polynomials are a particular case of the Meixner polynomials, with
$-a=N\in\mathbb{N}.$

Linear functional%
\[
L\left[  r\right]  =%
{\displaystyle\sum\limits_{x=0}^{N}}
r\left(  x\right)  \ \left(  -N\right)  _{x}\frac{z^{x}}{x!}.
\]

Pearson equation%
\[
\frac{\varrho\left(  x+1\right)  }{\varrho\left(  x\right)  }=\frac{z\left(
x-N\right)  }{x+1}.
\]

Moments%
\begin{equation}
\nu_{n}\left(  z\right)  =z^{n}\left(  -N\right)  _{n}\left(  1-z\right)
^{N-n},\quad z\neq1, \quad n\in\mathbb{N}_{0}. \label{moment Kraw}%
\end{equation}

Stieltjes transform difference equation
\[
\left(  t+1\right)  S\left(  t+1\right)  -z\left(  t-N\right)  S(t)=\left(
1-z\right)  \nu_{0}.
\]

\begin{remark}
Let's consider the symmetrized Krawtchouk polynomials. Since $\eta\left(
N\right)  =0,$ we use (\ref{symm2}) and obtain%
\[
\frac{\varrho_{\varsigma}\left(  x+1\right)  }{\varrho_{\varsigma}\left(
x\right)  }=\frac{z\left(  x-m\right)  }{x+m+1}\frac{\left(  x+m+1\right)
\left(  -x+m\right)  }{\left(  x-m\right)  z\left(  -x-1-m\right)  }%
=\frac{x-m}{x+m+1}.
\]
Hence, the symmetrized Krawtchouk polynomials are shifted Krawtchouk
polynomials with $z=1.$ But from (\ref{moments symm}) and (\ref{moment Kraw})
we see that%
\[
\nu_{n}^{\varsigma}=\nu_{n}\left(  1\right)  =0,\quad n\in\mathbb{N}_{0},
\]
and, therefore, we need to discard this example.
\end{remark}

\subsubsection{Symmetrized Charlier polynomials}

Special values%
\[
N\rightarrow2m,\quad z\rightarrow-1,\quad x\rightarrow x+m.
\]

Weight function%
\begin{align*}
\varrho_{\varsigma}\left(  x\right)   &  =\varrho\left(  x+m\right)  =\left(
-2m\right)  _{x+m}\frac{\left(  -1\right)  ^{x+m}}{\left(  x+m\right)  !}\\
&  =\left(  -2m\right)  _{m}\frac{\left(  -1\right)  ^{m}}{m!}\frac{\left(
-m\right)  _{x}}{\left(  m+1\right)  _{x}}\left(  -1\right)  ^{x}=\binom
{2m}{m}\frac{\left(  -m\right)  _{x}}{\left(  m+1\right)  _{x}}\left(
-1\right)  ^{x}.
\end{align*}

Linear functional%
\[
L_{\varsigma}\left[  r\right]  =\binom{2m}{m}%
{\displaystyle\sum\limits_{x=-m}^{m}}
r\left(  x\right)  \ \frac{\left(  -m\right)  _{x}}{\left(  m+1\right)  _{x}%
}\left(  -1\right)  ^{x}.
\]

Pearson equation%
\begin{equation}
\frac{\varrho_{\varsigma}\left(  x+1\right)  }{\varrho_{\varsigma}\left(
x\right)  }=\frac{-\left(  x-m\right)  }{x+m+1}. \label{PE1}%
\end{equation}

Moments on the basis $\{\overline{\phi}_{n}\left(  x\right)  \}_{n\geq0} $%
\begin{equation}
\nu_{n}^{\varsigma}=L_{\varsigma}\left[  \phi\left(  x+m\right)  \right]
=\left(  -1\right)  ^{n}\left(  -2m\right)  _{n}\ 2^{2m-n}=\nu_{n}\left(
-1\right)  , \quad n\in\mathbb{N}_{0}, \label{Mom1}%
\end{equation}
where $\nu_{n}\left(  z\right)  $ are the moments of the Krawtchouk
polynomials defined in (\ref{moment Kraw}).

Stieltjes transform difference equation%
\[
\left(  t+m+1\right)  S_{\varsigma}\left(  t+1\right)  +\left(  t-m\right)
S_{\varsigma}(t)=2\nu_{0}^{\varsigma}.
\]

\begin{remark}
These polynomials were studied in \cite{MR2033351}. The Pearson equation
(\ref{PE1}) is the same as equation (11) in that paper, with $m=c.$ The
authors used the weight function%
\[
\rho\left(  x\right)  =\frac{1}{\Gamma\left(  x+m+1\right)  \Gamma\left(
-x+m+1\right)  }=\frac{1}{\left(  m!\right)  ^{2}}\frac{\left(  -m\right)
_{x}}{\left(  m+1\right)  _{x}}\left(  -1\right)  ^{x},
\]
and, therefore,
\begin{equation}
\varrho_{\varsigma}\left(  x\right)  =\left(  2m\right)  !\rho\left(
x\right)  . \label{rho1}%
\end{equation}
The moments (\ref{Mom1}) are the same as those appearing in equation (15), if
we use (\ref{equal}) and write%
\[
\nu_{n}^{\varsigma}=\left(  -1\right)  ^{n}\left(  -2m\right)  _{n}%
\ 2^{2m-n}=2^{2m-n}\phi_{n}\left(  2m\right)  ,
\]
after taking the scaling (\ref{rho1}) into account.
\end{remark}

\subsection{$(2,1;N,1):$ Hahn polynomials}

Linear functional%
\[
L\left[  r\right]  =%
{\displaystyle\sum\limits_{x=0}^{N}}
r\left(  x\right)  \ \frac{\left(  a\right)  _{x}\left(  -N\right)  _{x}%
}{\left(  b+1\right)  _{x}}\frac{1}{x!}.
\]

Pearson equation%
\[
\frac{\eta\left(  x\right)  }{\sigma\left(  x+1\right)  }=\frac{\varrho\left(
x+1\right)  }{\varrho\left(  x\right)  }=\frac{\left(  x+a\right)  \left(
x-N\right)  }{\left(  x+b+1\right)  \left(  x+1\right)  }.
\]

Moments%
\[
\nu_{n}=\frac{\left(  -N\right)  _{n}\left(  a\right)  _{n}}{\left(
b+1\right)  _{n}}\ _{2}F_{1}\left(
\begin{array}
[c]{c}%
-N+n,a+n\\
b+1+n
\end{array}
;1\right)  , n\in\mathbb{N}_{0} .
\]
Using the Chu--Vandermonde Identity \cite[15.4.24]{MR2723248},
\[
\ _{2}F_{1}\left(
\begin{array}
[c]{c}%
-n,b\\
c
\end{array}
;1\right)  =\frac{\left(  c-b\right)  _{n}}{\left(  c\right)  _{n}},
\]
we get%
\[
\nu_{n}=\frac{\left(  -N\right)  _{n}\left(  a\right)  _{n}}{\left(
b+1\right)  _{n}}\frac{\left(  b+1-a\right)  _{N-n}}{\left(  b+1+n\right)
_{N-n}}, \quad n\in\mathbb{N}_{0}.
\]

Stieltjes transform difference equation%
\[
\left(  t+1\right)  \left(  t+b+1\right)  S\left(  t+1\right)  -\left(
t-N\right)  \left(  t+a\right)  S\left(  t\right)  =\left(  b+1-a+N\right)
\nu_{0}.
\]

\subsubsection{Symmetrized Meixner polynomials}

Special values%
\[
b\rightarrow-N-a,\quad N\rightarrow2m,\quad x\rightarrow x+m.
\]

Linear functional%
\[
L_{\varsigma}\left[  r\right]  =\binom{2m}{m}\frac{\left(  a\right)  _{m}%
}{\left(  a+m\right)  _{m}}%
{\displaystyle\sum\limits_{x=-m}^{m}}
r\left(  x\right)  \ \frac{\left(  a+m\right)  _{x}\left(  -m\right)  _{x}%
}{\left(  -m-a+1\right)  _{x}\left(  m+1\right)  _{x}}.
\]

Pearson equation%
\begin{equation}
\frac{\varrho_{\varsigma}\left(  x+1\right)  }{\varrho_{\varsigma}\left(
x\right)  }=\frac{\left(  x+a+m\right)  \left(  x-m\right)  }{\left(
x+1-m-a\right)  \left(  x+1+m\right)  }. \label{symm Meixner}%
\end{equation}

Moments%
\[
\nu_{n}^{\varsigma}=\frac{\left(  -N\right)  _{n}\left(  a\right)  _{n}%
}{\left(  1-N-a\right)  _{n}}\frac{\left(  1-N-2a\right)  _{N-n}}{\left(
n+1-N-a\right)  _{N-n}}, \quad n\in\mathbb{N}_{0}.
\]

Stieltjes transform difference equation%
\[
\left(  t+m+1\right)  \left(  t-m-a+1\right)  S_{\varsigma}\left(  t+1\right)
-\left(  t-m\right)  \left(  t+m+a\right)  S_{\varsigma}(t)=\left(
1-2a\right)  \nu_{0}^{\varsigma}.
\]

\begin{remark}
If we use (\ref{Gamma}) and (\ref{rho symm}) to write the weight function
$\varrho_{\varsigma}\left(  x\right)  $ in terms of Gamma functions, we have
\[
\varrho_{\varsigma}\left(  x\right)  =\varrho_{\varsigma}\left(  0\right)
\left[  \frac{m!}{\left(  a\right)  _{m}}\right]  ^{2}\frac{\left(  a\right)
_{x+m}}{\left(  x+m\right)  !}\frac{\left(  a\right)  _{-x+m}}{\left(
-x+m\right)  !}=\frac{\left(  2m\right)  !}{\left(  a\right)  _{2m}}%
\frac{\Gamma\left(  x+m+a\right)  \Gamma\left(  -x+m+a\right)  }{\Gamma\left(
x+m+1\right)  \Gamma\left(  -x+m+1\right)  }.
\]
This agrees (up to a normalization factor) with the weight function considered
by the authors in \cite{MR2033351} (equation 27), if we set $\delta_{1}=-m$
and $\delta_{2}=m+a.$ Note that the condition $\delta_{2}>-\delta_{1}$ is
satisfied if $a>0$ (the positive-definite case).
\end{remark}

\subsubsection{Symmetrized generalized Charlier polynomials}

See section \ref{GenChar} for a definition of the Generalized Charlier polynomials.

Special values%
\[
a\rightarrow-N-b,\quad N\rightarrow2m,\quad x\rightarrow x+m.
\]

Linear functional%
\[
L_{\varsigma}\left[  r\right]  =\binom{2m}{m}\frac{\left(  b+1+m\right)  _{m}%
}{\left(  b+1\right)  _{m}}%
{\displaystyle\sum\limits_{x=-m}^{m}}
r\left(  x\right)  \ \frac{\left(  -m-b\right)  _{x}\left(  -m\right)  _{x}%
}{\left(  b+1+m\right)  _{x}\left(  m+1\right)  _{x}}.
\]

Pearson equation%
\[
\frac{\varrho_{\varsigma}\left(  x+1\right)  }{\varrho_{\varsigma}\left(
x\right)  }=\frac{\left(  x-m-b\right)  \left(  x-m\right)  }{\left(
x+m+b+1\right)  \left(  x+m+1\right)  }.
\]

\strut Moments%
\[
\nu_{n}^{\varsigma}=\frac{\left(  -2m\right)  _{n}\left(  -2m-b\right)  _{n}%
}{\left(  b+1\right)  _{n}}\frac{\left(  2m+2b+1\right)  _{2m-n}}{\left(
b+1+n\right)  _{2m-n}}, \quad n\in\mathbb{N}_{0}.
\]

Stieltjes transform difference equation%
\[
\left(  t+m+1\right)  \left(  t+m+b+1\right)  S_{\varsigma}\left(  t+1\right)
-\left(  t-m\right)  \left(  t-m-b\right)  S_{\varsigma}(t)=\left(
2b+1+4m\right)  \nu_{0}^{\varsigma}.
\]

\begin{remark}
If we use (\ref{Gamma}) and (\ref{rho symm}) to write the weight function
$\varrho_{\varsigma}\left(  x\right)  $ in terms of Gamma functions, we have
\begin{align*}
\varrho_{\varsigma}\left(  x\right)   &  =\varrho_{\varsigma}\left(  0\right)
\left[  \left(  b+1\right)  _{m}m!\right]  ^{2}\frac{1}{\left(  b+1\right)
_{x+m}\left(  x+m\right)  !}\frac{1}{\left(  b+1\right)  _{-x+m}\left(
-x+m\right)  !}\\
&  =\frac{\left(  b+1\right)  _{2m}\left(  2m\right)  !}{\Gamma\left(
x+m+b+1\right)  \Gamma\left(  x+m+1\right)  \Gamma\left(  -x+m+b+1\right)
\Gamma\left(  -x+m+1\right)  }.
\end{align*}
This agrees (up to a normalization factor) with the weight function considered
by the authors in \cite{MR2033351} (equation 26).
\end{remark}

\strut

\section{{\protect\LARGE Semiclassical polynomials of class 1 }}

In this section, we consider all families of polynomials of class 1. We have
$5$ main cases, corresponding to
\[
\left(  p,q\right)  =\left(  0,1\right)  ,\left(  1,1\right)  ,\left(
2,0\right)  ,\left(  2,1\right)  ,\left(  3,2\right)  .
\]
There are also $14$ subcases.

\subsection{$(0,1):$ Generalized Charlier polynomials \label{GenChar}}

Linear functional%
\[
L\left[  r\right]  =%
{\displaystyle\sum\limits_{x=0}^{\infty}}
r\left(  x\right)  \ \frac{1}{\left(  b+1\right)  _{x}}\frac{z^{x}}{x!}.
\]

Pearson equation%
\[
\frac{\varrho\left(  x+1\right)  }{\varrho\left(  x\right)  }=\frac{z}{\left(
x+b+1\right)  \left(  x+1\right)  }.
\]

Moments%
\[
\nu_{n}\left(  z\right)  =\frac{z^{n}}{\left(  b+1\right)  _{n}}\ _{0}%
F_{1}\left[
\begin{array}
[c]{c}%
-\\
b+1+n
\end{array}
;z\right]  , \quad n\in\mathbb{N}_{0} .
\]

Stieltjes transform difference equation%
\[
\left(  t+1\right)  \left(  t+b+1\right)  S\left(  t+1\right)  -zS\left(
t\right)  =\left(  t+b+1\right)  \nu_{0}+\nu_{1}.
\]

\subsection{$(1,1):$ Generalized Meixner polynomials}

Linear functional%
\[
L\left[  r\right]  =%
{\displaystyle\sum\limits_{x=0}^{\infty}}
r\left(  x\right)  \ \frac{\left(  a\right)  _{x}}{\left(  b+1\right)  _{x}%
}\frac{z^{x}}{x!}.
\]

Pearson equation%
\[
\frac{\varrho\left(  x+1\right)  }{\varrho\left(  x\right)  }=\frac{z\left(
x+a\right)  }{\left(  x+1\right)  \left(  x+b+1\right)  }.
\]

Moments%
\[
\nu_{n}\left(  z\right)  =z^{n}\frac{\left(  a\right)  _{n}}{\left(
b+1\right)  _{n}}\ _{1}F_{1}\left[
\begin{array}
[c]{c}%
a+n\\
b+1+n
\end{array}
;z\right]  , \quad n\in\mathbb{N}_{0} .
\]

Stieltjes transform difference equation%
\begin{equation}
\left(  t+1\right)  \left(  t+b+1\right)  S\left(  t+1\right)  -z\left(
t+a\right)  S\left(  t\right)  =\left(  t+b+1-z\right)  \nu_{0}+\nu_{1}.
\label{DES GM}%
\end{equation}

\subsubsection{Reduced-Uvarov Charlier polynomials}

Let $\omega=0.$

Special values%
\[
a\rightarrow0,\quad b\rightarrow-1.
\]

Linear functional%
\[
L_{U}\left[  r\right]  =%
{\displaystyle\sum\limits_{x=0}^{\infty}}
r\left(  x\right)  \ \frac{z^{x}}{x!}+Mr\left(  0\right)  .
\]

Pearson equation%
\[
\frac{\varrho_{U}\left(  x+1\right)  }{\varrho_{U}\left(  x\right)  }%
=\frac{zx}{\left(  x+1\right)  x}.
\]

Moments%
\[
\nu_{n}^{U}\left(  z\right)  =z^{n}e^{z}+M\phi_{n}\left(  0\right)  , \quad
n\in\mathbb{N}_{0} .
\]

Stieltjes transform difference equation%
\begin{equation}
t\left[  \left(  t+1\right)  S_{U}\left(  t+1\right)  -zS_{U}\left(  t\right)
\right]  =\left(  t-z\right)  \nu_{0}^{U}+\nu_{1}^{U}.
\label{DE Dirac Charlier}%
\end{equation}
Since%
\[
\left(  t-z\right)  \nu_{0}^{U}+\nu_{1}^{U}=\left(  t-z\right)  \left(
e^{z}+M\right)  +ze^{z}=\allowbreak t\nu_{0}+M\left(  t-z\right)  ,
\]
where $\nu_{0}=e^{z}$ is the first moment of the Charlier polynomials, we can
rewrite (\ref{DE Dirac Charlier}) in the $M-$dependent form
\[
\left(  t+1\right)  S_{U}\left(  t+1\right)  -zS_{U}\left(  t\right)
=\allowbreak\nu_{0}+M\left(  1-\frac{z}{t}\right)  ,
\]
which agrees with (\ref{reduced1}).

\subsubsection{Christoffel Charlier polynomials}

Let $\omega\notin\left\{  0,z\right\}  .$

Special values%
\[
a\rightarrow-\omega+1,\quad b\rightarrow-\omega-1.
\]

Linear functional%
\[
L_{C}\left[  r\right]  =%
{\displaystyle\sum\limits_{x=0}^{\infty}}
r\left(  x\right)  \left(  x-\omega\right)  \frac{z^{x}}{x!}.
\]

Pearson equation%
\[
\frac{\varrho_{C}\left(  x+1\right)  }{\varrho_{C}\left(  x\right)  }%
=\frac{z\left(  x+1-\omega\right)  }{\left(  x+1\right)  \left(
x-\omega\right)  }.
\]

Moments%
\begin{equation}
\nu_{n}^{C}\left(  z\right)  =\nu_{n+1}+\left(  n-\omega\right)  \nu
_{n}=\left(  n+z-\omega\right)  z^{n}e^{z}, \quad n\in\mathbb{N}_{0}.
\label{Moments CC}%
\end{equation}

Stieltjes transform difference equation%
\[
\left(  t-\omega\right)  \left(  t+1\right)  S_{C}\left(  t+1\right)  -\left(
t+1-\omega\right)  zS_{C}\left(  t\right)  =\nu_{0}\left[  \left(
z-\omega\right)  t+\left(  \omega^{2}-z\omega+z\right)  \right]  .
\]
Note that if we use (\ref{Moments CC}), we have%
\[
\nu_{0}^{C}=\left(  z-\omega\right)  \nu_{0},\quad\nu_{1}^{C}=\left(
1+z-\omega\right)  z\nu_{0},
\]
and therefore%
\[
\left(  t-\omega\right)  \left(  t+1\right)  S_{C}\left(  t+1\right)  -\left(
t+1-\omega\right)  zS_{C}\left(  t\right)  =\left(  t-\omega-z\right)  \nu
_{0}^{C}+\nu_{1}^{C},
\]
which agrees with (\ref{DES GM}) as $a\rightarrow1-\omega,\quad b\rightarrow
-\omega-1.$

\subsubsection{Geronimus Charlier polynomials}

Let $\omega\notin\left\{  -1\right\}  \cup\mathbb{N}_{0}.$

Special values%
\[
a\rightarrow-\omega,\quad b\rightarrow-\omega.
\]

Linear functional%
\[
L_{G}\left[  r\right]  =%
{\displaystyle\sum\limits_{x=0}^{\infty}}
\frac{r\left(  x\right)  }{x-\omega}\frac{z^{x}}{x!}+Mr\left(  \omega\right)
.
\]

Pearson equation%
\[
\frac{\varrho_{G}\left(  x+1\right)  }{\varrho_{G}\left(  x\right)  }%
=\frac{z\left(  x-\omega\right)  }{\left(  x+1\right)  \left(  x+1-\omega
\right)  }.
\]

Moments%
\[
\nu_{n}^{G}\left(  z\right)  =\phi_{n}\left(  \omega\right)  \left[
M-S\left(  \omega\right)  +e^{z}%
{\displaystyle\sum\limits_{k=0}^{n-1}}
\frac{z^{k}}{\phi_{k+1}\left(  \omega\right)  }\right]  ,\quad n\in
\mathbb{N}_{0}.
\]

Stieltjes transform difference equation%
\[
\left(  t+1-\omega\right)  \sigma\left(  t+1\right)  S_{G}\left(  t+1\right)
-\left(  t-\omega\right)  \eta\left(  t\right)  S_{G}\left(  t\right)
=\nu_{0}+\nu_{0}^{G}\left(  t+1-z\right)  .
\]
Note that if we use (\ref{DE MG}), we have%
\[
\nu_{1}^{G}-\omega\nu_{0}^{G}=\nu_{0},
\]
and therefore we can write%
\[
\left(  t+1\right)  \left(  t+1-\omega\right)  S_{G}\left(  t+1\right)
-z\left(  t-\omega\right)  S_{G}\left(  t\right)  =\left(  t+1-\omega
-z\right)  \nu_{0}^{G}+\nu_{1}^{G},
\]
which agrees with (\ref{DES GM}) as $a\rightarrow-\omega,\quad b\rightarrow
-\omega.$

\subsubsection{Truncated Charlier polynomials}

Special values%
\[
a\rightarrow-N,\quad b\rightarrow-N-1.
\]

Linear functional%
\[
L_{T}\left[  r\right]  =%
{\displaystyle\sum\limits_{x=0}^{N}}
r\left(  x\right)  \ \frac{z^{x}}{x!}.
\]

Pearson equation%
\[
\frac{\varrho_{T}\left(  x+1\right)  }{\varrho_{T}\left(  x\right)  }%
=\frac{z\left(  x-N\right)  }{\left(  x+1\right)  \left(  x-N\right)  }.
\]

Moments%
\[
\nu_{n}^{T}(z)=\frac{z^{N}}{\left(  N-n\right)  !}\ _{2}F_{0}\left[
\begin{array}
[c]{c}%
n-N,1\\
-
\end{array}
;-\frac{1}{z}\right]  , \quad n\in\mathbb{N}_{0} .
\]

Stieltjes transform difference equation%
\[
\left(  t-N\right)  \left[  \left(  t+1\right)  S_{T}\left(  t+1\right)
-zS_{T}\left(  t\right)  \right]  =\left(  t-N-z\right)  \nu_{0}^{T}+\nu
_{1}^{T}.
\]

\begin{remark}
The performance of the modified Chebyshev algorithm used to compute the
moments of these polynomials was studied in \cite{MR667829} (example 4.3).
\end{remark}

\subsection{$(2,0;N):$ Generalized Krawtchouk polynomials}

Linear functional%
\[
L\left[  r\right]  =%
{\displaystyle\sum\limits_{x=0}^{N}}
r\left(  x\right)  \ \left(  a\right)  _{x}\left(  -N\right)  _{x}\frac{z^{x}%
}{x!}.
\]

Pearson equation%
\[
\frac{\varrho\left(  x+1\right)  }{\varrho\left(  x\right)  }=\frac{z\left(
x+a\right)  \left(  x-N\right)  }{x+1}.
\]

\strut Moments%
\[
\nu_{n}\left(  z\right)  =z^{n}\left(  -N\right)  _{n}\left(  a\right)
_{n}\ _{2}F_{0}\left[
\begin{array}
[c]{c}%
-N+n,a+n\\
-
\end{array}
;z\right]  , \quad n\in\mathbb{N}_{0} .
\]

Stieltjes transform difference equation%
\[
\left(  t+1\right)  S\left(  t+1\right)  -z\left(  t+a\right)  \left(
t-N\right)  S\left(  t\right)  =\left[  1-\left(  t+a-N\right)  z\right]
\nu_{0}-z\nu_{1}.
\]

\subsection{$(2,1):$ Generalized Hahn polynomials of type I}

Linear functional%
\[
L\left[  r\right]  =%
{\displaystyle\sum\limits_{x=0}^{\infty}}
r\left(  x\right)  \ \frac{\left(  a_{1}\right)  _{x}\left(  a_{2}\right)
_{x}}{\left(  b+1\right)  _{x}}\frac{z^{x}}{x!}.
\]

Pearson equation%
\[
\frac{\varrho\left(  x+1\right)  }{\varrho\left(  x\right)  }=\frac{z\left(
x+a_{1}\right)  \left(  x+a_{2}\right)  }{\left(  x+b+1\right)  \left(
x+1\right)  }.
\]

\strut Moments%
\[
\nu_{n}\left(  z\right)  =z^{n}\frac{\left(  a_{1}\right)  _{n}\left(
a_{2}\right)  _{n}}{\left(  b+1\right)  _{n}}\ _{2}F_{1}\left[
\begin{array}
[c]{c}%
a_{1}+n,a_{2}+n\\
b+1+n
\end{array}
;z\right]  , \quad n\in\mathbb{N}_{0} ,
\]
where we choose the principal branch $z\in\mathbb{C}\setminus\lbrack
1,\infty).$ Note that since%
\[
b+1+n-\left(  a_{1}+n+a_{2}+n\right)  \leq0,\quad n\geq b-a_{1}-a_{2}+1,
\]
$\nu_{n}\left(  1\right)  $ is undefined for $n\geq b-a_{1}-a_{2}+1.$

Stieltjes transform difference equation%
\begin{align*}
&  \left(  t+1\right)  \left(  t+b+1\right)  S\left(  t+1\right)  -z\left(
t+a_{1}\right)  \left(  t+a_{2}\right)  S\left(  t\right) \\
&  =\left[  t+b+1-z\left(  t+a_{1}+a_{2}\right)  \right]  \nu_{0}+\left(
1-z\right)  \nu_{1}.
\end{align*}

\subsubsection{\strut Reduced-Uvarov Meixner polynomials}

Let $\left(  \omega,\Omega\right)  \in\left\{  \left(  0,0\right)  ,\left(
-a,a+1\right)  \right\}  .$

Special values%
\[
a_{1}\rightarrow\Omega,\quad a_{2}\rightarrow a,\quad b\rightarrow\Omega-1.
\]

Linear functional%
\[
L_{U}\left[  r\right]  =%
{\displaystyle\sum\limits_{x=0}^{\infty}}
r\left(  x\right)  \ \left(  a\right)  _{x}\frac{z^{x}}{x!}+Mr\left(
\omega\right)  .
\]

Pearson equation%
\[
\frac{\varrho_{U}\left(  x+1\right)  }{\varrho_{U}\left(  x\right)  }%
=\frac{z\left(  x+\Omega\right)  \left(  x+a\right)  }{\left(  x+\Omega
\right)  \left(  x+1\right)  }.
\]

Moments%
\[
\nu_{n}^{U}\left(  z\right)  =z^{n}\left(  a\right)  _{n}\left(  1-z\right)
^{-a-n}+M\phi_{n}\left(  \omega\right)  , \quad n\in\mathbb{N}_{0} .
\]

Stieltjes transform difference equation%
\[
\left(  t+\Omega\right)  \left[  \left(  t+1\right)  S_{U}\left(  t+1\right)
-z\left(  t+a\right)  S_{U}\left(  t\right)  \right]  =\left[  (1-z)\left(
t+\Omega\right)  -az\right]  \nu_{0}^{U}+\left(  1-z\right)  \nu_{1}^{U}.
\]

Stieltjes transform $M-$dependent difference equation%
\[
\left(  t+1\right)  S_{U}\left(  t+1\right)  -z\left(  t+a\right)
S_{U}\left(  t\right)  =\left(  1-z\right)  \nu_{0}+M\left[  \frac
{t+1}{t-\omega+1}-\frac{z\left(  t+a\right)  }{t-\omega}\right]  .
\]

\subsubsection{Christoffel Meixner polynomials}

Let $\omega\notin\left\{  -a,0,az\left(  1-z\right)  ^{-1}\right\}  .$

Special values%
\[
a_{1}\rightarrow a,\quad a_{2}\rightarrow-\omega+1,\quad b\rightarrow
-\omega-1.
\]

Linear functional%
\[
L_{C}\left[  r\right]  =%
{\displaystyle\sum\limits_{x=0}^{\infty}}
r\left(  x\right)  \left(  x-\omega\right)  \left(  a\right)  _{x}%
\ \frac{z^{x}}{x!}.
\]

Pearson equation%
\[
\frac{\varrho_{C}\left(  x+1\right)  }{\varrho_{C}\left(  x\right)  }%
=\frac{z\left(  x+a\right)  \left(  x+1-\omega\right)  }{\left(  x+1\right)
\left(  x-\omega\right)  }.
\]

Moments%
\begin{equation}
\nu_{n}^{C}\left(  z\right)  =\nu_{n+1}+\left(  n-\omega\right)  \nu_{n}%
=z^{n}\left(  a\right)  _{n}\left(  1-z\right)  ^{-a-n-1}\left(  az+\omega
z+n-\omega\right)  , \quad n\in\mathbb{N}_{0} . \label{Moments CM}%
\end{equation}

Stieltjes transform difference equation%
\begin{align*}
&  \left(  t-\omega\right)  \left(  t+1\right)  S_{C}\left(  t+1\right)
-\left(  t+1-\omega\right)  z\left(  t+a\right)  S_{C}\left(  t\right) \\
&  =\nu_{0}\left[  \allowbreak\left(  az-\omega+\omega z\right)  t+\left(
\omega^{2}+az+\omega z-az\omega-z\omega^{2}\right)  \right]  .
\end{align*}
Note that if we use (\ref{Moments CM}), we have%
\[
\nu_{0}^{C}=\frac{az+\omega z-\omega}{1-z}\nu_{0},\quad\nu_{1}^{C}%
=\frac{za(az+\omega z+1-\omega)}{\left(  1-z\right)  ^{2}}\nu_{0},
\]
and therefore we can also write%
\begin{align*}
&  \left(  t-\omega\right)  \left(  t+1\right)  S_{C}\left(  t+1\right)
-\left(  t+1-\omega\right)  z\left(  t+a\right)  S_{C}\left(  t\right) \\
&  =\left[  t-\omega-z\left(  t+a-\omega+1\right)  \right]  \nu_{0}%
^{C}+\left(  1-z\right)  \nu_{1}^{C}.
\end{align*}

\subsubsection{Geronimus Meixner polynomials}

Let $\omega\notin\left\{  -1,1-a\right\}  \cup\mathbb{N}_{0}.$

Special values%
\[
a_{1}\rightarrow a,\quad a_{2}\rightarrow-\omega,\quad b\rightarrow-\omega.
\]

Linear functional%
\[
L_{G}\left[  r\right]  =%
{\displaystyle\sum\limits_{x=0}^{\infty}}
\frac{r\left(  x\right)  }{x-\omega}\left(  a\right)  _{x}\ \frac{z^{x}}%
{x!}+Mr\left(  \omega\right)  .
\]

Pearson equation%
\[
\frac{\varrho_{G}\left(  x+1\right)  }{\varrho_{G}\left(  x\right)  }%
=\frac{z\left(  x+a\right)  \left(  x-\omega\right)  }{\left(  x+1\right)
\left(  x+1-\omega\right)  }.
\]

Moments%
\[
\nu_{n}^{G}\left(  z\right)  =\phi_{n}\left(  \omega\right)  \left[
M-S\left(  \omega\right)  +\left(  1-z\right)  ^{-a}%
{\displaystyle\sum\limits_{k=0}^{n-1}}
\frac{z^{k}\left(  a\right)  _{k}\left(  1-z\right)  ^{-k}}{\phi_{k+1}\left(
\omega\right)  }\right]  ,\quad n\in\mathbb{N}_{0}.
\]

Stieltjes transform difference equation%
\begin{align*}
&  \left(  t+1\right)  \left(  t-\omega+1\right)  S_{G}\left(  t+1\right)
-z\left(  t+a\right)  \left(  t-\omega\right)  S_{G}\left(  t\right) \\
&  =\left(  1-z\right)  \nu_{0}+\nu_{0}^{G}\left[  \allowbreak\left(
1-z\right)  t+\left(  1-az\right)  \right]  .
\end{align*}
Note that if we use (\ref{DE MG}), we have%
\[
\nu_{1}^{G}-\omega\nu_{0}^{G}=\nu_{0},
\]
and therefore we can also write%
\begin{align*}
&  \left(  t+1\right)  \left(  t-\omega+1\right)  S_{G}\left(  t+1\right)
-z\left(  t+a\right)  \left(  t-\omega\right)  S_{G}\left(  t\right) \\
&  =\left[  \allowbreak\left(  1-z\right)  t+z\omega-\omega-az+1\right]
\nu_{0}^{G}+\left(  1-z\right)  \nu_{1}^{G}.
\end{align*}

\subsubsection{Truncated Meixner polynomials}

Special values%
\[
a_{1}\rightarrow a,\quad a_{2}\rightarrow-N,\quad b\rightarrow-N-1.
\]

Linear functional%
\[
L_{T}\left[  r\right]  =%
{\displaystyle\sum\limits_{x=0}^{N}}
r\left(  x\right)  \ \left(  a\right)  _{x}\ \frac{z^{x}}{x!}.
\]

\strut Moments%
\[
\nu_{n}^{T}=\left(  a\right)  _{N}\ \frac{z^{N}}{\left(  N-n\right)  !}%
\ _{2}F_{1}\left[
\begin{array}
[c]{c}%
n-N,1\\
1-N-\ a
\end{array}
;\frac{1}{z}\right]  , \quad n\in\mathbb{N}_{0} .
\]

Pearson equation%
\[
\frac{\varrho_{T}\left(  x+1\right)  }{\varrho_{T}\left(  x\right)  }%
=\frac{z\left(  x+a\right)  \left(  x-N\right)  }{\left(  x-N\right)  \left(
x+1\right)  }.
\]

Stieltjes transform difference equation%
\[
\left(  t-N\right)  \left[  \left(  t+1\right)  S_{T}\left(  t+1\right)
-z\left(  t+a\right)  S_{T}(t)\right]  =\left[  \left(  t-N\right)  \left(
1-z\right)  -az\right]  \nu_{0}^{T}+\left(  1-z\right)  \nu_{1}^{T}.
\]

\subsubsection{Symmetrized Generalized Krawtchouk polynomials}

Special values%
\[
a_{1}\rightarrow a,\quad a_{2}\rightarrow-2m,\quad b\rightarrow-2m-a,\quad
z\rightarrow-1,\quad x\rightarrow x+m.
\]

Linear functional%
\[
L_{\varsigma}\left[  r\right]  =\varrho_{\varsigma}\left(  0\right)
{\displaystyle\sum\limits_{x=-m}^{m}}
r\left(  x\right)  \ \frac{\left(  a+m\right)  _{x}\ \left(  -m\right)  _{x}%
}{\left(  -m-a+1\right)  _{x}\ \left(  m+1\right)  _{x}}\left(  -1\right)
^{x},
\]
where%
\[
\varrho_{\varsigma}\left(  0\right)  =\left(  -1\right)  ^{m}\binom{2m}%
{m}\frac{\left(  a\right)  _{m}\ }{\left(  a+m\right)  _{m}\ }.
\]

Pearson equation%
\[
\frac{\varrho_{\varsigma}\left(  x+1\right)  }{\varrho_{\varsigma}\left(
x\right)  }=\frac{-\left(  x+a+m\right)  \left(  x-m\right)  }{\left(
x+1-a-m\right)  \left(  x+1+m\right)  }.
\]

\strut Moments on the basis $\{\overline{\phi}_{n}\left(  x\right)
\}_{n\geq0} $%
\[
\nu_{n}^{\varsigma}=\left(  -1\right)  ^{n}\frac{\left(  a\right)  _{n}\left(
-2m\right)  _{n}}{\left(  -2m-a\right)  _{n}}\ _{2}F_{1}\left[
\begin{array}
[c]{c}%
-2m+n,a+n\\
-2m-a+n
\end{array}
;-1\right]  , \quad n\in\mathbb{N}_{0} .
\]

Stieltjes transform difference equation%
\[
\left(  t+m+1\right)  \left(  t-m-a+1\right)  S_{\varsigma}\left(  t+1\right)
+\left(  t-m\right)  \left(  t+m+a\right)  S_{\varsigma}\left(  t\right)
=\left(  2t-2m+1\right)  \nu_{0}^{\varsigma}+2\nu_{1}^{\varsigma}.
\]

\subsection{$(3,2;N,1):$ Generalized Hahn polynomials of type II}

Linear functional%
\[
L\left[  r\right]  =%
{\displaystyle\sum\limits_{x=0}^{N}}
r\left(  x\right)  \ \frac{\left(  a_{1}\right)  _{x}\left(  a_{2}\right)
_{x}\left(  -N\right)  _{x}}{\left(  b_{1}+1\right)  _{x}\left(
b_{2}+1\right)  _{x}}\frac{1}{x!}.
\]

Pearson equation%
\[
\frac{\varrho\left(  x+1\right)  }{\varrho\left(  x\right)  }=\frac{\left(
x+a_{1}\right)  \left(  x+a_{2}\right)  \left(  x-N\right)  }{\left(
x+b_{1}+1\right)  \left(  x+b_{2}+1\right)  \left(  x+1\right)  }.
\]

Moments%
\[
\nu_{n}=\frac{\left(  -N\right)  _{n}\left(  a_{1}\right)  _{n}\left(
a_{2}\right)  _{n}}{\left(  b_{1}+1\right)  _{n}\left(  b_{2}+1\right)  _{n}%
}\ _{3}F_{2}\left[
\begin{array}
[c]{c}%
-N+n,a_{1}+n,a_{2}+n\\
b_{1}+1+n,b_{2}+1+n
\end{array}
;1\right]  , \quad n\in\mathbb{N}_{0} .
\]

Stieltjes transform difference equation%
\begin{align*}
&  \left(  t+b_{1}+1\right)  \left(  t+b_{2}+1\right)  \left(  t+1\right)
S\left(  t+1\right)  -\left(  t+a_{1}\right)  \left(  t+a_{2}\right)  \left(
t-N\right)  S\left(  t\right) \\
&  =\left[  \left(  N+2+b_{1}+b_{2}-a_{1}-a_{2}\right)  t+b_{1}+b_{2}%
+b_{1}b_{2}-a_{1}-a_{2}-a_{1}a_{2}+Na_{1}+Na_{2}+1\right]  \nu_{0}\\
&  +\left(  N+b_{1}+b_{2}-a_{1}-a_{2}+1\right)  \nu_{1}.
\end{align*}

\subsubsection{Reduced-Uvarov Hahn polynomials}

Let $\left(  \omega,\Omega\right)  \in\left\{  \left(  0,0\right)  ,\left(
-a,a+1\right)  ,\left(  -b,b\right)  ,\left(  N,-N+1\right)  \right\}  .$

Special values%
\[
a_{1}\rightarrow a,\quad a_{2}\rightarrow\Omega,\quad b_{1}\rightarrow
\Omega-1,\quad b_{2}\rightarrow a.
\]

Linear functional%
\[
L_{U}\left[  r\right]  =%
{\displaystyle\sum\limits_{x=0}^{N}}
r\left(  x\right)  \ \frac{\left(  a\right)  _{x}\left(  -N\right)  _{x}%
}{\left(  b+1\right)  _{x}}\frac{1}{x!}+Mr\left(  \omega\right)  .
\]

Pearson equation%

\[
\frac{\varrho_{U}\left(  x+1\right)  }{\varrho_{U}\left(  x\right)  }%
=\frac{\left(  x+a\right)  \left(  x+\Omega\right)  \left(  x-N\right)
}{\left(  x+b+1\right)  \left(  x+\Omega\right)  \left(  x+1\right)  }.
\]

Moments%
\[
\nu_{n}^{U}=\frac{\left(  -N\right)  _{n}\left(  a\right)  _{n}}{\left(
b+1\right)  _{n}}\frac{\left(  b+1-a\right)  _{N-n}}{\left(  b+1+n\right)
_{N-n}}+M\phi_{n}\left(  \omega\right)  , \quad n\in\mathbb{N}_{0} .
\]

Stieltjes transform difference equation%
\begin{gather*}
\left(  t+\Omega\right)  \left[  \left(  t+b+1\right)  \left(  t+1\right)
S_{U}\left(  t+1\right)  -\left(  t+a\right)  \left(  t-N\right)  \right]
S_{U}\left(  t\right) \\
=\left[  (N-a+b+1)t+\left(  N-a+b+1\right)  \Omega+aN\right]  \nu_{0}%
^{U}+\left(  N+b-a\right)  \nu_{1}^{U}.
\end{gather*}

Stieltjes transform $M-$dependent difference equation%
\begin{align*}
&  \left(  t+b+1\right)  \left(  t+1\right)  S_{U}\left(  t+1\right)  -\left(
t-N\right)  \left(  t+a\right)  S_{U}\left(  t\right) \\
&  =\left(  N+b-a+1\right)  \nu_{0}+M\left[  \frac{\left(  t+b+1\right)
\left(  t+1\right)  }{t-\omega+1}-\frac{\left(  t-N\right)  \left(
t+a\right)  }{t-\omega}\right]  .
\end{align*}

\subsubsection{Christoffel Hahn polynomials}

Let $\omega\notin\left\{  -a,-b,0,N,-\dfrac{aN}{b-a+N}\right\}  .$

Special values%
\[
a_{1}\rightarrow a,\quad a_{2}\rightarrow-\omega+1,\quad b_{1}\rightarrow
b,\quad b_{2}\rightarrow-\omega-1.
\]

Linear functional%
\[
L_{C}\left[  r\right]  =%
{\displaystyle\sum\limits_{x=0}^{\infty}}
r\left(  x\right)  \left(  x-\omega\right)  \frac{\left(  a\right)
_{x}\left(  -N\right)  _{x}}{\left(  b+1\right)  _{x}}\ \frac{1}{x!}.
\]

Pearson equation%
\[
\frac{\varrho_{C}\left(  x+1\right)  }{\varrho_{C}\left(  x\right)  }%
=\frac{\left(  x+a\right)  \left(  x-N\right)  \left(  x+1-\omega\right)
}{\left(  x+b+1\right)  \left(  x+1\right)  \left(  x-\omega\right)  }.
\]

Moments%
\begin{equation}
\nu_{n}^{C}\left(  z\right)  =\nu_{n+1}+\left(  n-\omega\right)  \nu
_{n}=\left[  n-\omega-\frac{\left(  a+n\right)  \left(  N-n\right)  }%
{b-a+N-n}\right]  \nu_{n}, \quad n\in\mathbb{N}_{0}. \label{Moments CH}%
\end{equation}

Stieltjes transform difference equation%
\begin{align*}
&  \left(  t-\omega\right)  \left(  t+b+1\right)  \left(  t+1\right)
S_{C}\left(  t+1\right)  -\left(  t-\omega+1\right)  \left(  t-N\right)
\left(  t+a\right)  S_{C}\left(  t\right) \\
&  =\left[  \left(  a\omega-N\omega-b\omega-Na\right)  t+a\omega
-N\omega+\omega^{2}+N\omega^{2}-a\omega^{2}+b\omega^{2}-Na+Na\omega\right]
\nu_{0}.
\end{align*}
Note that if we use (\ref{Moments CH}), we have%
\[
\nu_{0}^{C}=-\frac{\left(  a+\omega\right)  N+\left(  b-a\right)  \omega
}{b-a+N}\nu_{0},\quad\nu_{1}^{C}=\frac{(Na+N\omega-a\omega+b\omega
-b-\omega)Na}{\left(  b-a+N\right)  \left(  b-a+N-1\right)  }\nu_{0},
\]
and therefore we can also write%
\begin{gather*}
\left(  t-\omega\right)  \left(  t+b+1\right)  \left(  t+1\right)
S_{C}\left(  t+1\right)  -\left(  t-\omega+1\right)  \left(  t-N\right)
\left(  t+a\right)  S_{C}\left(  t\right) \\
=\left[  \allowbreak\left(  N-a+b\right)  t+Na-N\omega+a\omega-b\omega
+N-a-\omega\right]  \nu_{0}^{C}+\left(  N-a+b-1\right)  \nu_{1}^{C}.
\end{gather*}

\subsubsection{Geronimus Hahn polynomials}

Let $\omega\notin\left\{  -b-1,-1,1-a,N+1\right\}  \cup\mathbb{N}_{0}.$

Special values%
\[
a_{1}\rightarrow a,\quad a_{2}\rightarrow-\omega,\quad b\rightarrow-\omega.
\]

Linear functional%
\[
L_{G}\left[  r\right]  =%
{\displaystyle\sum\limits_{x=0}^{\infty}}
\frac{r\left(  x\right)  }{x-\omega}\frac{\left(  a\right)  _{x}\left(
-N\right)  _{x}}{\left(  b+1\right)  _{x}}\ \frac{1}{x!}+Mr\left(
\omega\right)  .
\]

Pearson equation%
\[
\frac{\varrho_{G}\left(  x+1\right)  }{\varrho_{G}\left(  x\right)  }%
=\frac{\left(  x+a\right)  \left(  x-N\right)  \left(  x-\omega\right)
}{\left(  x+b+1\right)  \left(  x+1\right)  \left(  x+1-\omega\right)  }.
\]

Moments%
\[
\nu_{n}^{G}\left(  z\right)  =\phi_{n}\left(  \omega\right)  \left[
M-S\left(  \omega\right)  +%
{\displaystyle\sum\limits_{k=0}^{n-1}}
\frac{\nu_{k}}{\phi_{k+1}\left(  \omega\right)  }\right]  ,\quad
n\in\mathbb{N}_{0}.
\]

Stieltjes transform difference equation%
\[
\xi_{G}\left(  t\right)  =\xi\left(  t\right)  +\left[  \sigma\left(
t+1\right)  -\eta\left(  t\right)  \right]  \nu_{0}^{G},
\]%
\begin{align*}
&  \left(  t-\omega+1\right)  \left(  t+b+1\right)  \left(  t+1\right)
S_{G}\left(  t+1\right)  -\left(  t-\omega\right)  \left(  t-N\right)  \left(
t+a\right)  S_{G}\left(  t\right) \\
&  =\left(  b+1-a+N\right)  \nu_{0}+\left[  \allowbreak\left(  N-a+b+2\right)
t+b+Na+1\right]  \nu_{0}^{G}.
\end{align*}
Note that if we use (\ref{DE MG}), we have%
\[
\nu_{1}^{G}-\omega\nu_{0}^{G}=\nu_{0},
\]
and therefore we can also write%
\begin{align*}
&  \left(  t-\omega+1\right)  \left(  t+b+1\right)  \left(  t+1\right)
S_{G}\left(  t+1\right)  -\left(  t-\omega\right)  \left(  t-N\right)  \left(
t+a\right)  S_{G}\left(  t\right) \\
&  =\left[  \allowbreak(N-a+b+2)t+Na-N\omega+a\omega-b\omega+b-\omega
+1\right]  \nu_{0}^{G}+\left(  b-1-a+N\right)  \nu_{1}^{G}.
\end{align*}
$\allowbreak$

\subsubsection{Symmetrized Hahn polynomials}

Special values%
\[
a_{1}\rightarrow a,\quad a_{2}\rightarrow-N-b,\quad b_{1}\rightarrow b,\quad
b_{2}\rightarrow-N-a,\quad N\rightarrow2m,\quad x\rightarrow x+m.
\]

Linear functional%
\[
L_{\varsigma}\left[  r\right]  =\varrho_{\varsigma}\left(  0\right)
{\displaystyle\sum\limits_{x=-m}^{m}}
r\left(  x\right)  \ \frac{\left(  m+a\right)  _{x}\left(  -m-b\right)
_{x}\left(  -m\right)  _{x}}{\left(  1-m-a\right)  _{x}\left(  m+b+1\right)
_{x}\left(  m+1\right)  _{x}},
\]
where%
\[
\varrho_{\varsigma}\left(  0\right)  =\left(  -1\right)  ^{m}\binom{2m}%
{m}\frac{\left(  a\right)  _{m}}{\left(  a+m\right)  _{m}}\frac{\left(
b+1+m\right)  _{m}}{\left(  b+1\right)  _{m}}.
\]

Pearson equation%
\[
\frac{\varrho_{\varsigma}\left(  x+1\right)  }{\varrho_{\varsigma}\left(
x\right)  }=\frac{\left(  x+m+a\right)  \left(  x-m-b\right)  \left(
x-m\right)  }{\left(  x+1-m-a\right)  \left(  x+1+m+b\right)  \left(
x+1+m\right)  }.
\]

\strut Moments on the basis $\{\overline{\phi}_{n}\left(  x\right)
\}_{n\geq0} $%
\[
\nu_{n}^{\varsigma}=\frac{\left(  -2m\right)  _{n}\left(  a\right)
_{n}\left(  -2m-b\right)  _{n}}{\left(  b+1\right)  _{n}\left(
-2m-a+1\right)  _{n}}\ _{3}F_{2}\left[
\begin{array}
[c]{c}%
-2m+n,a+n,-2m-b+n\\
b+1+n,-2m-a+1+n
\end{array}
;1\right]  , \quad n\in\mathbb{N}_{0} .
\]

Stieltjes transform difference equation%
\begin{align*}
&  \left(  t+1-m-a\right)  \left(  t+1+m+b\right)  \left(  t+1+m\right)
S_{\varsigma}\left(  t+1\right)  -\left(  t+m+a\right)  \left(  t-m-b\right)
\left(  t-m\right)  S_{\varsigma}(t)\\
&  =\left[  2\left(  1+b-a+m\right)  t+2am-2bm-2m^{2}-a+b+1\right]  \nu
_{0}^{\varsigma}+\left(  1+2m-2a+2b\right)  \nu_{1}^{\varsigma}.
\end{align*}

\subsubsection{Symmetrized polynomials of type $(3,0;N)$}

For a definition of the polynomials of type $(3,0;N),$ see Section \ref{(30N)}.

Special values%
\[
b_{1}\rightarrow-N-a_{1},\quad b_{2}\rightarrow-N-a_{2},\quad N\rightarrow
2m,\quad x\rightarrow x+m.
\]

Linear functional%
\[
L_{\varsigma}\left[  r\right]  =\varrho_{\varsigma}\left(  0\right)
{\displaystyle\sum\limits_{x=-m}^{m}}
r\left(  x\right)  \ \frac{\left(  a_{1}+m\right)  _{x}\left(  a_{2}+m\right)
_{x}\left(  -m\right)  _{x}}{\left(  1-a_{1}-m\right)  _{x}\left(
1-a_{2}-m\right)  _{x}\left(  m+1\right)  _{x}},
\]
where%
\[
\varrho_{\varsigma}\left(  0\right)  =\left(  -1\right)  ^{m}\binom{2m}%
{m}\frac{\left(  a_{1}\right)  _{m}}{\left(  a_{1}+m\right)  _{m}}%
\frac{\left(  a_{2}\right)  _{m}}{\left(  a_{2}+m\right)  _{m}}.
\]

Pearson equation%
\[
\frac{\varrho_{\varsigma}\left(  x+1\right)  }{\varrho_{\varsigma}\left(
x\right)  }=\frac{\left(  x+a_{1}+m\right)  \left(  x+a_{2}+m\right)  \left(
x-m\right)  }{\left(  x+1-a_{1}-m\right)  \left(  x+1-a_{2}-m\right)  \left(
x+1+m\right)  }.
\]

\strut Moments on the basis $\{\overline{\phi}_{n}\left(  x\right)
\}_{n\geq0} $
\[
\nu_{n}^{\varsigma}=\frac{\left(  -2m\right)  _{n}\left(  a_{1}\right)
_{n}\left(  a_{2}\right)  _{n}}{\left(  1-a_{1}-2m\right)  _{n}\left(
1-a_{2}-2m\right)  _{n}}\ _{3}F_{2}\left[
\begin{array}
[c]{c}%
-2m+n,a_{1}+n,a_{2}+n\\
1-a_{1}-2m+n,1-a_{2}-2m+n
\end{array}
;1\right]  , \quad n\in\mathbb{N}_{0} .
\]

Stieltjes transform difference equation%
\begin{align*}
&  \left(  t+1-a_{1}-m\right)  \left(  t+1-a_{2}-m\right)  \left(
t+1+m\right)  S_{\varsigma}\left(  t+1\right) \\
&  -\left(  t+a_{1}+m\right)  \left(  t+a_{2}+m\right)  \left(  t-m\right)
S_{\varsigma}(t)\\
&  =\left[  -2\left(  m+a_{1}+a_{2}-1\right)  t+2m^{2}-2m+1+\left(
2m-1\right)  \left(  a_{1}+a_{2}\right)  \right]  \nu_{0}^{\varsigma}\\
&  +\left(  -2m+1-2a_{1}-2a_{2}\right)  \nu_{1}^{\varsigma}.
\end{align*}

\section{\strut Semiclassical polynomials of class 2}

In this section, we consider all families of polynomials of class $2$. We have
$7$ main cases, corresponding to
\[
\left(  p,q\right)  =\left(  0,2\right)  ,\left(  1,2\right)  ,\left(
2,2\right)  ,\left(  3,0\right)  ,\left(  3,1\right)  ,\left(  3,2\right)
,\left(  4,3\right)  .
\]
There are also $25$ subcases.

\subsection{Polynomials of type (0,2)}

\strut Linear functional%
\[
L\left[  r\right]  =%
{\displaystyle\sum\limits_{x=0}^{\infty}}
r\left(  x\right)  \ \frac{1}{\left(  b_{1}+1\right)  _{x}\left(
b_{2}+1\right)  _{x}}\frac{z^{x}}{x!}.
\]

Pearson equation%
\[
\frac{\varrho\left(  x+1\right)  }{\varrho\left(  x\right)  }=\frac{z}{\left(
x+b_{1}+1\right)  \left(  x+b_{2}+1\right)  \left(  x+1\right)  }.
\]

Moments%
\[
\nu_{n}\left(  z\right)  =\frac{z^{n}}{\left(  b_{1}+1\right)  _{n}\left(
b_{2}+1\right)  _{n}}\ _{0}F_{2}\left[
\begin{array}
[c]{c}%
-\\
b_{1}+1+n,b_{2}+1+n
\end{array}
;z\right]  , \quad n\in\mathbb{N}_{0} .
\]

Stieltjes transform difference equation%
\begin{align*}
&  \left(  t+b_{1}+1\right)  \left(  t+b_{2}+1\right)  \left(  t+1\right)
S\left(  t+1\right)  -zS\left(  t\right) \\
&  =\left(  t+b_{1}+1\right)  \left(  t+b_{2}+1\right)  \nu_{0}+\left(
t+b_{1}+b_{2}+2\right)  \nu_{1}+\nu_{2}.
\end{align*}

\subsection{Polynomials of type (1,2)}

\strut Linear functional%
\[
L\left[  r\right]  =%
{\displaystyle\sum\limits_{x=0}^{\infty}}
r\left(  x\right)  \ \frac{\left(  a\right)  _{x}}{\left(  b_{1}+1\right)
_{x}\left(  b_{2}+1\right)  _{x}}\frac{z^{x}}{x!}.
\]

Pearson equation%
\[
\frac{\varrho\left(  x+1\right)  }{\varrho\left(  x\right)  }=\frac{z\left(
x+a\right)  }{\left(  x+b_{1}+1\right)  \left(  x+b_{2}+1\right)  \left(
x+1\right)  }.
\]

Moments%
\[
\nu_{n}\left(  z\right)  =z^{n}\frac{\left(  a\right)  _{n}}{\left(
b_{1}+1\right)  _{n}\left(  b_{2}+1\right)  _{n}}\ _{1}F_{2}\left[
\begin{array}
[c]{c}%
a+n\\
b_{1}+1+n,b_{2}+1+n
\end{array}
;z\right]  , \quad n\in\mathbb{N}_{0} .
\]

Stieltjes transform difference equation%
\begin{align*}
&  \left(  t+b_{1}+1\right)  \left(  t+b_{2}+1\right)  \left(  t+1\right)
S\left(  t+1\right)  -z\left(  t+a\right)  S\left(  t\right) \\
&  =\left[  \left(  t+b_{1}+1\right)  \left(  t+b_{2}+1\right)  -z\right]
\nu_{0}+\left(  t+b_{1}+b_{2}+2\right)  \nu_{1}+\nu_{2}.
\end{align*}
\strut

\subsubsection{Reduced-Uvarov Generalized Charlier polynomials}

Let $\omega\in\left\{  0,-b\right\}  .$

\strut Special values%
\[
a\rightarrow-\omega,\quad b_{1}\rightarrow b,\quad b_{2}\rightarrow-\omega-1.
\]

Linear functional%
\[
L_{U}\left[  r\right]  =%
{\displaystyle\sum\limits_{x=0}^{\infty}}
r\left(  x\right)  \ \frac{1}{\left(  b+1\right)  _{x}}\frac{z^{x}}%
{x!}+Mr\left(  \omega\right)  .
\]

Pearson equation%
\[
\frac{\varrho_{U}\left(  x+1\right)  }{\varrho_{U}\left(  x\right)  }%
=\frac{z\left(  x-\omega\right)  }{\left(  x+b+1\right)  \left(
x-\omega\right)  \left(  x+1\right)  }.
\]

Moments%
\[
\nu_{n}^{U}\left(  z\right)  =\frac{z^{n}}{\left(  b+1\right)  _{n}}%
\ _{0}F_{1}\left[
\begin{array}
[c]{c}%
-\\
b+1+n
\end{array}
;z\right]  +M\phi_{n}\left(  \omega\right)  , \quad n\in\mathbb{N}_{0} .
\]

Stieltjes transform difference equation%
\begin{align*}
&  \left(  t-\omega\right)  \left[  \left(  t+b+1\right)  \left(  t+1\right)
S_{U}\left(  t+1\right)  -zS_{U}\left(  t\right)  \right] \\
&  =\left[  \left(  t+b+1\right)  \left(  t+b\right)  -z\right]  \nu_{0}%
^{U}+\left(  t+2b+1\right)  \nu_{1}^{U}+\nu_{2}^{U}.
\end{align*}

Stieltjes transform $M-$dependent difference equation%
\[
\left(  t+b+1\right)  \left(  t+1\right)  S_{U}\left(  t+1\right)
-zS_{U}\left(  t\right)  =\left(  t+b+1\right)  \nu_{0}+\nu_{1}+M\left[
\frac{\left(  t+b+1\right)  \left(  t+1\right)  }{t-\omega+1}-\frac
{z}{t-\omega}\right]  .
\]

\subsubsection{Christoffel generalized Charlier polynomials}

Let $\omega\notin\left\{  -b,0,\dfrac{\nu_{1}}{\nu_{0}}\right\}  .$

Special values%
\[
a\rightarrow-\omega+1,\quad b_{1}\rightarrow b,\quad b_{2}\rightarrow
-\omega-1.
\]

Linear functional%
\[
L_{C}\left[  r\right]  =%
{\displaystyle\sum\limits_{x=0}^{\infty}}
r\left(  x\right)  \left(  x-\omega\right)  \frac{1}{\left(  b+1\right)  _{x}%
}\frac{z^{x}}{x!}.
\]

Pearson equation%
\[
\frac{\varrho_{C}\left(  x+1\right)  }{\varrho_{C}\left(  x\right)  }%
=\frac{z\left(  x+1-\omega\right)  }{\left(  x+b+1\right)  \left(  x+1\right)
\left(  x-\omega\right)  }.
\]

Moments%
\[
\nu_{n}^{C}\left(  z\right)  =\nu_{n+1}+\left(  n-\omega\right)  \nu_{n},
\quad n\in\mathbb{N}_{0}.
\]

Stieltjes transform difference equation%
\begin{align*}
&  \left(  t-\omega\right)  \left(  t+b+1\right)  \left(  t+1\right)
S_{C}\left(  t+1\right)  -z\left(  t-\omega+1\right)  S_{C}\left(  t\right) \\
&  =\left[  -\omega t^{2}+\left(  z-\omega-b\omega+\omega^{2}\right)
t+\left(  z-z\omega+\omega^{2}+b\omega^{2}\right)  \right]  \nu_{0}+\left(
t-\omega\right)  \left(  t-\omega+1\right)  \nu_{1}.
\end{align*}

\subsubsection{Geronimus generalized Charlier polynomials}

Let $\omega\notin\left\{  -b-1,-1\right\}  \cup\mathbb{N}_{0}.$

Special values%
\[
a\rightarrow-\omega,\quad b_{1}\rightarrow b,\quad b_{2}\rightarrow-\omega.
\]

Linear functional%
\[
L_{G}\left[  r\right]  =%
{\displaystyle\sum\limits_{x=0}^{\infty}}
\frac{r\left(  x\right)  }{x-\omega}\frac{1}{\left(  b+1\right)  _{x}}%
\frac{z^{x}}{x!}+Mr\left(  \omega\right)  .
\]

Pearson equation%
\[
\frac{\varrho_{G}\left(  x+1\right)  }{\varrho_{G}\left(  x\right)  }%
=\frac{z\left(  x-\omega\right)  }{\left(  x+b+1\right)  \left(  x+1\right)
\left(  x+1-\omega\right)  }.
\]

Moments
\[
\nu_{n}^{G}\left(  z\right)  =\phi_{n}\left(  \omega\right)  \left[
M-S\left(  \omega\right)  +%
{\displaystyle\sum\limits_{k=0}^{n-1}}
\frac{\nu_{k}}{\phi_{k+1}\left(  \omega\right)  }\right]  ,\quad
n\in\mathbb{N}_{0}.
\]

Stieltjes transform difference equation%
\begin{align*}
&  \left(  t-\omega+1\right)  \left(  t+b+1\right)  \left(  t+1\right)
S_{G}\left(  t+1\right)  -\left(  t-\omega\right)  zS_{G}\left(  t\right) \\
&  =\left(  t+b+1\right)  \nu_{0}+\nu_{1}+\left[  \left(  t+b+1\right)
\left(  t+1\right)  -z\right]  \nu_{0}^{G}.
\end{align*}

\subsubsection{Truncated generalized Charlier polynomials}

Special values%
\[
a\rightarrow-N,\quad b_{1}\rightarrow b,\quad b_{2}\rightarrow-N-1.
\]

Linear functional%
\[
L_{T}\left[  r\right]  =%
{\displaystyle\sum\limits_{x=0}^{N}}
r\left(  x\right)  \ \frac{1}{\left(  b+1\right)  _{x}}\frac{z^{x}}{x!}.
\]

Pearson equation%
\[
\frac{\varrho_{T}\left(  x+1\right)  }{\varrho_{T}\left(  x\right)  }%
=\frac{z\left(  x-N\right)  }{\left(  x+b+1\right)  \left(  x-N\right)
\left(  x+1\right)  }.
\]

\strut Moments%
\[
\nu_{n}^{T}\left(  z\right)  =\frac{1}{\left(  b+1\right)  _{N}}\frac{z^{N}%
}{\left(  N-n\right)  !}\ _{3}F_{0}\left[
\begin{array}
[c]{c}%
n-N,1,\ -N-b\\
-
\end{array}
;\frac{1}{z}\right]  , \quad n\in\mathbb{N}_{0} .
\]

Stieltjes transform difference equation%
\begin{align*}
&  \left(  t-N\right)  \left[  \left(  t+b+1\right)  \left(  t+1\right)
S_{T}\left(  t+1\right)  -zS_{T}\left(  t\right)  \right] \\
&  =\left[  \left(  t+b+1\right)  \left(  t-N\right)  -z\right]  \nu_{0}%
^{T}+\left(  t+b+1-N\right)  \nu_{1}^{T}+\nu_{2}^{T}.
\end{align*}

\subsection{Polynomials of type (2,2)}

\strut Linear functional%
\[
L\left[  r\right]  =%
{\displaystyle\sum\limits_{x=0}^{\infty}}
r\left(  x\right)  \ \frac{\left(  a_{1}\right)  _{x}\left(  a_{2}\right)
_{x}}{\left(  b_{1}+1\right)  _{x}\left(  b_{2}+1\right)  _{x}}\frac{z^{x}%
}{x!}.
\]

Pearson equation%
\[
\frac{\varrho\left(  x+1\right)  }{\varrho\left(  x\right)  }=\frac{z\left(
x+a_{1}\right)  \left(  x+a_{2}\right)  }{\left(  x+b_{1}+1\right)  \left(
x+b_{2}+1\right)  \left(  x+1\right)  }.
\]

Moments%
\[
\nu_{n}\left(  z\right)  =z^{n}\frac{\left(  a_{1}\right)  _{n}\left(
a_{2}\right)  _{n}}{\left(  b_{1}+1\right)  _{n}\left(  b_{2}+1\right)  _{n}%
}\ _{2}F_{2}\left[
\begin{array}
[c]{c}%
a_{1}+n,a_{2}+n\\
b_{1}+1+n,b_{2}+1+n
\end{array}
;z\right]  , \quad n\in\mathbb{N}_{0} .
\]

Stieltjes transform difference equation%
\begin{align*}
&  \left(  t+b_{1}+1\right)  \left(  t+b_{2}+1\right)  \left(  t+1\right)
S\left(  t+1\right)  -z\left(  t+a_{1}\right)  \left(  t+a_{2}\right)
S\left(  t\right) \\
&  =\left[  \left(  t+b_{1}+1\right)  \left(  t+b_{2}+1\right)  -\left(
t+a_{1}+a_{2}\right)  z\right]  \nu_{0}+\left(  t+b_{1}+b_{2}+2-z\right)
\nu_{1}+\nu_{2}.
\end{align*}

\subsubsection{Uvarov Charlier polynomials}

Let $\omega\neq0.$

Special values%
\[
a_{1}\rightarrow-\omega,\quad a_{2}\rightarrow-\omega+1,\quad b_{1}%
\rightarrow-\omega-1,\quad b_{2}\rightarrow-\omega.
\]

Linear functional%
\[
L_{U}\left[  r\right]  =%
{\displaystyle\sum\limits_{x=0}^{\infty}}
r\left(  x\right)  \ \frac{z^{x}}{x!}+Mr\left(  \omega\right)  .
\]

Pearson equation%
\[
\frac{\varrho_{U}\left(  x+1\right)  }{\varrho_{U}\left(  x\right)  }%
=\frac{z\left(  x-\omega\right)  \left(  x-\omega+1\right)  }{\left(
x-\omega\right)  \left(  x-\omega+1\right)  \left(  x+1\right)  }.
\]

Moments%
\[
\nu_{n}^{U}\left(  z\right)  =z^{n}e^{z}+M\phi_{n}\left(  \omega\right)  ,
\quad n\in\mathbb{N}_{0} .
\]

Stieltjes transform difference equation%
\begin{align*}
&  \left(  t-\omega\right)  \left(  t-\omega+1\right)  \left[  \left(
t+1\right)  S_{U}\left(  t+1\right)  -zS_{U}\left(  t\right)  \right] \\
&  =\left[  \left(  t-\omega\right)  \left(  t-\omega+1\right)  -\left(
t-2\omega+1\right)  z\right]  \nu_{0}^{U}+\left(  t-z-2\omega+1\right)
\nu_{1}^{U}+\nu_{2}^{U}.
\end{align*}

Stieltjes transform $M-$dependent difference equation%
\[
\left(  t+1\right)  S_{U}\left(  t+1\right)  -zS_{U}\left(  t\right)  =\nu
_{0}+M\left(  \frac{t+1}{t+1-\omega}-\frac{z}{t-\omega}\right)  .
\]

\subsubsection{Reduced-Uvarov generalized Meixner polynomials}

Let $\left(  \omega,\Omega\right)  \in\left\{  \left(  -a,a+1\right)  ,\left(
-b,b\right)  ,\left(  0,0\right)  \right\}  .$

Special values%
\[
a_{1}\rightarrow a,\quad a_{2}\rightarrow\Omega,\quad b_{1}\rightarrow b,\quad
b_{2}\rightarrow\Omega-1.
\]

Linear functional%
\[
L_{U}\left[  r\right]  =%
{\displaystyle\sum\limits_{x=0}^{\infty}}
r\left(  x\right)  \ \frac{\left(  a\right)  _{x}}{\left(  b+1\right)  _{x}%
}\frac{z^{x}}{x!}+Mr\left(  \omega\right)  .
\]

Pearson equation%
\[
\frac{\varrho_{U}\left(  x+1\right)  }{\varrho_{U}\left(  x\right)  }%
=\frac{z\left(  x+a\right)  \left(  x+\Omega\right)  }{\left(  x+b+1\right)
\left(  x+\Omega\right)  \left(  x+1\right)  }.
\]

\strut

Moments%
\[
\nu_{n}^{U}\left(  z\right)  =z^{n}\frac{\left(  a\right)  _{n}}{\left(
b+1\right)  _{n}}\ _{1}F_{1}\left[
\begin{array}
[c]{c}%
a+n\\
b+1+n
\end{array}
;z\right]  +M\phi_{n}\left(  \omega\right)  , \quad n\in\mathbb{N}_{0} .
\]

Stieltjes transform difference equation%
\begin{align*}
&  \left(  t+\Omega\right)  \left[  \left(  t+b+1\right)  \left(  t+1\right)
S_{U}\left(  t+1\right)  -z\left(  t+a\right)  S_{U}\left(  t\right)  \right]
\\
&  =\left[  \left(  t+b+1\right)  \left(  t+\Omega\right)  -\left(
t+a+\Omega\right)  z\right]  \nu_{0}^{U}+\left(  t+b+1+\Omega-z\right)
\nu_{1}^{U}+\nu_{2}^{U}.
\end{align*}

Stieltjes transform $M-$dependent difference equation%
\begin{align*}
&  \left(  t+b+1\right)  \left(  t+1\right)  S_{U}\left(  t+1\right)
-z\left(  t+a\right)  S_{U}\left(  t\right) \\
&  =\left(  t+b+1-z\right)  \nu_{0}+\nu_{1}+M\left[  \frac{\left(
t+b+1\right)  \left(  t+1\right)  }{t+1-\omega}-\frac{z\left(  t+a\right)
}{t-\omega}\right]  .
\end{align*}

\subsubsection{Christoffel generalized Meixner polynomials}

Let $\omega\notin\left\{  -a,-b,0,\dfrac{\nu_{1}}{\nu_{0}}\right\}  .$

Special values%
\[
a_{1}\rightarrow a,\quad a_{2}\rightarrow-\omega+1,\quad b_{1}\rightarrow
b,\quad b_{2}\rightarrow-\omega-1.
\]

Linear functional%
\[
L_{C}\left[  r\right]  =%
{\displaystyle\sum\limits_{x=0}^{\infty}}
r\left(  x\right)  \left(  x-\omega\right)  \frac{\left(  a\right)  _{x}%
}{\left(  b+1\right)  _{x}}\frac{z^{x}}{x!}.
\]

Pearson equation%
\[
\frac{\varrho_{C}\left(  x+1\right)  }{\varrho_{C}\left(  x\right)  }%
=\frac{z\left(  x+a\right)  \left(  x+1-\omega\right)  }{\left(  x+1\right)
\left(  x+b+1\right)  \left(  x-\omega\right)  }.
\]

Moments%
\[
\nu_{n}^{C}\left(  z\right)  =\nu_{n+1}+\left(  n-\omega\right)  \nu_{n},
\quad n\in\mathbb{N}_{0}.
\]

Stieltjes transform difference equation%
\begin{align*}
&  \left(  t-\omega\right)  \left(  t+b+1\right)  \left(  t+1\right)
S_{C}\left(  t+1\right)  -z\left(  t-\omega+1\right)  \left(  t+a\right)
S_{C}\left(  t\right) \\
&  =\left[  -\omega t^{2}+\left(  z\omega-b\omega-\omega+\omega^{2}+az\right)
t+z\omega+\omega^{2}+b\omega^{2}-z\omega^{2}+az-az\omega\right]  \nu_{0}\\
&  +\left(  t-\omega\right)  \left(  t-\omega+1\right)  \nu_{1}.
\end{align*}

\subsubsection{Geronimus generalized Meixner polynomials}

Let $\omega\notin\left\{  -b-1,-1,1-a\right\}  \cup\mathbb{N}_{0}.$

Special values%
\[
a_{1}\rightarrow a,\quad a_{2}\rightarrow-\omega,\quad b_{1}\rightarrow
b,\quad b_{2}\rightarrow-\omega.
\]

Linear functional%
\[
L_{G}\left[  r\right]  =%
{\displaystyle\sum\limits_{x=0}^{\infty}}
\frac{r\left(  x\right)  }{x-\omega}\frac{\left(  a\right)  _{x}}{\left(
b+1\right)  _{x}}\frac{z^{x}}{x!}+Mr\left(  \omega\right)  .
\]

Pearson equation%
\[
\frac{\varrho_{G}\left(  x+1\right)  }{\varrho_{G}\left(  x\right)  }%
=\frac{z\left(  x+a\right)  \left(  x-\omega\right)  }{\left(  x+1\right)
\left(  x+b+1\right)  \left(  x+1-\omega\right)  }.
\]

Moments%
\[
\nu_{n}^{G}\left(  z\right)  =\phi_{n}\left(  \omega\right)  \left[
M-S\left(  \omega\right)  +%
{\displaystyle\sum\limits_{k=0}^{n-1}}
\frac{\nu_{k}}{\phi_{k+1}\left(  \omega\right)  }\right]  ,\quad
n\in\mathbb{N}_{0}.
\]

Stieltjes transform difference equation%
\begin{align*}
&  \left(  t-\omega+1\right)  \left(  t+b+1\right)  \left(  t+1\right)
S_{G}\left(  t+1\right)  -z\left(  t-\omega\right)  \left(  t+a\right)
S_{G}\left(  t\right) \\
&  =\left(  t+b+1-z\right)  \nu_{0}+\nu_{1}+\left[  \left(  t+b+1\right)
\left(  t+1\right)  -z\left(  t+a\right)  \right]  \nu_{0}^{G}.
\end{align*}

\subsubsection{Truncated generalized Meixner polynomials}

Special values%
\[
a_{1}\rightarrow a,\quad a_{2}\rightarrow-N,\quad b_{1}\rightarrow b,\quad
b_{2}\rightarrow-N-1.
\]

Linear functional%
\[
L_{T}\left[  r\right]  =%
{\displaystyle\sum\limits_{x=0}^{N}}
r\left(  x\right)  \ \frac{\left(  a\right)  _{x}}{\left(  b+1\right)  _{x}%
}\frac{z^{x}}{x!}.
\]

Pearson equation%
\[
\frac{\varrho_{T}\left(  x+1\right)  }{\varrho_{T}\left(  x\right)  }%
=\frac{z\left(  x+a\right)  \left(  x-N\right)  }{\left(  x+b+1\right)
\left(  x-N\right)  \left(  x+1\right)  }.
\]

Moments%
\[
\nu_{n}^{T}\left(  z\right)  =\frac{\left(  a\right)  _{N}\ }{\left(
b+1\right)  _{N}}\frac{z^{N}}{\left(  N-n\right)  !}\ _{3}F_{1}\left[
\begin{array}
[c]{c}%
n-N,1,\ -N-b\\
1-N-\ a
\end{array}
;\frac{1}{z}\right]  , \quad n\in\mathbb{N}_{0} .
\]

Stieltjes transform difference equation%
\begin{align*}
&  \left(  t-N\right)  \left[  \left(  t+b+1\right)  \left(  t+1\right)
S_{T}\left(  t+1\right)  -z\left(  t+a\right)  S_{T}\left(  t\right)  \right]
\\
&  =\left[  \left(  t+b+1\right)  \left(  t-N\right)  -\left(  t+a-N\right)
z\right]  \nu_{0}^{T}+\left(  t+b-N+1-z\right)  \nu_{1}^{T}+\nu_{2}^{T}.
\end{align*}

\subsection{Polynomials of type (3,0;N) \label{(30N)}}

\strut Linear functional%
\[
L\left[  r\right]  =%
{\displaystyle\sum\limits_{x=0}^{N}}
r\left(  x\right)  \ \left(  a_{1}\right)  _{x}\left(  a_{2}\right)
_{x}\left(  -N\right)  _{x}\frac{z^{x}}{x!}.
\]

Pearson equation%
\[
\frac{\varrho\left(  x+1\right)  }{\varrho\left(  x\right)  }=\frac{z\left(
x+a_{1}\right)  \left(  x+a_{2}\right)  \left(  x-N\right)  }{x+1}.
\]

Moments%
\[
\nu_{n}\left(  z\right)  =z^{n}\left(  -N\right)  _{n}\left(  a_{1}\right)
_{n}\left(  a_{2}\right)  _{n}\ _{3}F_{0}\left[
\begin{array}
[c]{c}%
-N+n,a_{1}+n,a_{2}+n\\
-
\end{array}
;z\right]  , \quad n\in\mathbb{N}_{0} .
\]

Stieltjes transform difference equation%
\begin{align*}
&  \left(  t+1\right)  S\left(  t+1\right)  -z\left(  t+a_{1}\right)  \left(
t+a_{2}\right)  \left(  t-N\right)  S\left(  t\right) \\
&  =\left[  \allowbreak-zt^{2}-\left(  a_{1}+a_{2}-N\right)  zt+z\left(
Na_{1}+Na_{2}-a_{1}a_{2}\right)  +1\right]  \nu_{0}\\
&  -\left(  t+a_{1}+a_{2}+1-N\right)  z\nu_{1}-z\nu_{2}.
\end{align*}

\subsection{Polynomials of type (3,1;N)}

Linear functional%
\[
L\left[  r\right]  =%
{\displaystyle\sum\limits_{x=0}^{N}}
r\left(  x\right)  \ \frac{\left(  a_{1}\right)  _{x}\left(  a_{2}\right)
_{x}\left(  -N\right)  _{x}}{\left(  b+1\right)  _{x}}\frac{z^{x}}{x!}.
\]

Pearson equation%
\[
\frac{\varrho\left(  x+1\right)  }{\varrho\left(  x\right)  }=\frac{z\left(
x+a_{1}\right)  \left(  x+a_{2}\right)  \left(  x-N\right)  }{\left(
x+b+1\right)  \left(  x+1\right)  }.
\]

Moments%
\[
\nu_{n}\left(  z\right)  =z^{n}\frac{\left(  -N\right)  _{n}\left(
a_{1}\right)  _{n}\left(  a_{2}\right)  _{n}}{\left(  b+1\right)  _{n}}%
\ _{3}F_{1}\left[
\begin{array}
[c]{c}%
-N+n,a_{1}+n,a_{2}+n\\
b+1+n
\end{array}
;z\right]  , \quad n\in\mathbb{N}_{0} .
\]

Stieltjes transform difference equation%
\begin{align*}
&  \left(  x+b+1\right)  \left(  t+1\right)  S\left(  t+1\right)  -z\left(
t+a_{1}\right)  \left(  t+a_{2}\right)  \left(  t-N\right)  S\left(  t\right)
\\
&  =\left[  \allowbreak-zt^{2}+t-\left(  a_{1}+a_{2}-N\right)  zt+\left(
Na_{1}+Na_{2}-a_{1}a_{2}\right)  z+b+1\right]  \nu_{0}\\
&  +\left[  1-\left(  t+a_{1}+a_{2}+1-N\right)  z\right]  \nu_{1}-z\nu_{2}.
\end{align*}
\strut

\subsubsection{Reduced-Uvarov generalized Krawtchouk polynomials}

Let $\left(  \omega,\Omega\right)  \in\left\{  \left(  -a,a+1\right)  ,\left(
0,0\right)  ,\left(  N,-N+1\right)  \right\}  .$

Special values%
\[
a_{1}\rightarrow a,\quad a_{2}\rightarrow\Omega,\quad b\rightarrow\Omega-1.
\]

Linear functional%
\[
L_{U}\left[  r\right]  =%
{\displaystyle\sum\limits_{x=0}^{N}}
r\left(  x\right)  \ \left(  a\right)  _{x}\left(  -N\right)  _{x}\frac{z^{x}%
}{x!}+Mr\left(  \omega\right)  .
\]

Pearson equation%
\[
\frac{\varrho_{U}\left(  x+1\right)  }{\varrho_{U}\left(  x\right)  }%
=\frac{z\left(  x+a\right)  \left(  x+\Omega\right)  \left(  x-N\right)
}{\left(  x+\Omega\right)  \left(  x+1\right)  }.
\]

Moments%
\[
\nu_{n}^{U}\left(  z\right)  =z^{n}\left(  -N\right)  _{n}\left(  a\right)
_{n}\ _{2}F_{0}\left[
\begin{array}
[c]{c}%
-N+n,a+n\\
-
\end{array}
;z\right]  +M\phi_{n}\left(  \omega\right)  , \quad n\in\mathbb{N}_{0} .
\]

Stieltjes transform difference equation%
\begin{align*}
&  \left(  t+\Omega\right)  \left[  \left(  t+1\right)  S_{U}\left(
t+1\right)  -z\left(  t+a\right)  \left(  t-N\right)  S_{U}\left(  t\right)
\right] \\
&  =\left[  \allowbreak-zt^{2}+t-\left(  a+\Omega-N\right)  zt+\left(
Na+N\Omega-a\Omega\right)  z+\Omega\right]  \nu_{0}^{U}\\
&  +\left[  1-\left(  t+a+\Omega+1-N\right)  z\right]  \nu_{1}^{U}-z\nu
_{2}^{U}.
\end{align*}

Stieltjes transform $M-$dependent difference equation%
\begin{align*}
&  \left(  t+1\right)  S_{U}\left(  t+1\right)  -z\left(  t+a\right)  \left(
t-N\right)  S_{U}\left(  t\right) \\
&  =\left[  1-\left(  t+a-N\right)  z\right]  \nu_{0}-z\nu_{1}+M\left[
\frac{t+1}{t-\omega+1}-\frac{z\left(  t+a\right)  \left(  t-N\right)
}{t-\omega}\right]  .
\end{align*}

\subsubsection{Christoffel generalized Krawtchouk polynomials}

Let $\omega\notin\left\{  -a,0,N,\dfrac{\nu_{1}}{\nu_{0}}\right\}  .$

Special values%
\[
a_{1}\rightarrow a,\quad a_{2}\rightarrow-\omega+1,\quad b\rightarrow
-\omega-1.
\]

Linear functional%
\[
L_{C}\left[  r\right]  =%
{\displaystyle\sum\limits_{x=0}^{\infty}}
r\left(  x\right)  \left(  x-\omega\right)  \left(  a\right)  _{x}\left(
-N\right)  _{x}\frac{z^{x}}{x!}.
\]

Pearson equation%
\[
\frac{\varrho_{C}\left(  x+1\right)  }{\varrho_{C}\left(  x\right)  }%
=\frac{z\left(  x+a\right)  \left(  x-N\right)  \left(  x+1-\omega\right)
}{\left(  x+1\right)  \left(  x-\omega\right)  }.
\]

Moments%
\[
\nu_{n}^{C}\left(  z\right)  =\nu_{n+1}+\left(  n-\omega\right)  \nu_{n},
\quad n\in\mathbb{N}_{0}.
\]

Stieltjes transform difference equation%
\begin{align*}
&  \left(  t-\omega\right)  \left(  t+1\right)  S_{C}\left(  t+1\right)
-z\left(  t-\omega+1\right)  \left(  t+a\right)  \left(  t-N\right)
S_{C}\left(  t\right) \\
&  =\left[  z\omega t^{2}-\left(  \omega-z\omega+z\omega^{2}+Nz\omega
-az\omega+Naz\right)  t\right]  \nu_{0}\\
&  +\left(  \omega^{2}-Nz\omega+az\omega+Nz\omega^{2}-az\omega^{2}%
-Naz+Naz\omega\right)  \nu_{0}-z\left(  t-\omega\right)  \left(
t-\omega+1\right)  \nu_{1}.
\end{align*}

\subsubsection{Geronimus generalized Krawtchouk polynomials}

Let $\omega\notin\left\{  -1,1-a,1+N\right\}  \cup\mathbb{N}_{0}.$

Special values%
\[
a_{1}\rightarrow a,\quad a_{2}\rightarrow-\omega,\quad b\rightarrow-\omega.
\]

Linear functional%
\[
L_{G}\left[  r\right]  =%
{\displaystyle\sum\limits_{x=0}^{\infty}}
\frac{r\left(  x\right)  }{x-\omega}\left(  a\right)  _{x}\left(  -N\right)
_{x}\frac{z^{x}}{x!}+Mr\left(  \omega\right)  .
\]

Pearson equation%
\[
\frac{\varrho_{G}\left(  x+1\right)  }{\varrho_{G}\left(  x\right)  }%
=\frac{z\left(  x+a\right)  \left(  x-N\right)  \left(  x-\omega\right)
}{\left(  x+1\right)  \left(  x+1-\omega\right)  }.
\]

Moments%
\[
\nu_{n}^{G}\left(  z\right)  =\phi_{n}\left(  \omega\right)  \left[
M-S\left(  \omega\right)  +%
{\displaystyle\sum\limits_{k=0}^{n-1}}
\frac{\nu_{k}}{\phi_{k+1}\left(  \omega\right)  }\right]  ,\quad
n\in\mathbb{N}_{0}.
\]

Stieltjes transform difference equation%
\begin{align*}
&  \left(  t-\omega+1\right)  \left(  t+1\right)  S_{G}\left(  t+1\right)
-z\left(  t-\omega\right)  \left(  t+a\right)  \left(  t-N\right)
S_{G}\left(  t\right) \\
&  =\left(  -zt+Nz-az+1\right)  \nu_{0}-z\nu_{1}+\left[  t+1-z\left(
t+a\right)  \left(  t-N\right)  \right]  \nu_{0}^{G}.
\end{align*}

\subsection{Polynomials of type (3,2)}

\strut Linear functional%
\[
L\left[  r\right]  =%
{\displaystyle\sum\limits_{x=0}^{\infty}}
r\left(  x\right)  \ \frac{\left(  a_{1}\right)  _{x}\left(  a_{2}\right)
_{x}\left(  a_{3}\right)  _{x}}{\left(  b_{1}+1\right)  _{x}\left(
b_{2}+1\right)  _{x}}\frac{z^{x}}{x!}.
\]

Pearson equation%
\[
\frac{\varrho\left(  x+1\right)  }{\varrho\left(  x\right)  }=\frac{z\left(
x+a_{1}\right)  \left(  x+a_{2}\right)  \left(  x+a_{3}\right)  }{\left(
x+b_{1}+1\right)  \left(  x+b_{2}+1\right)  \left(  x+1\right)  }.
\]

Moments%

\[
\nu_{n}\left(  z\right)  =z^{n}\frac{\left(  a_{1}\right)  _{n}\left(
a_{2}\right)  _{n}\left(  a_{3}\right)  _{n}}{\left(  b_{1}+1\right)
_{n}\left(  b_{2}+1\right)  _{n}}\ _{3}F_{2}\left[
\begin{array}
[c]{c}%
a_{1}+n,a_{2}+n,a_{3}+n\\
b_{1}+1+n,b_{2}+1+n
\end{array}
;z\right]  , \quad n\in\mathbb{N}_{0} ,
\]
where we choose the principal branch $z\in\mathbb{C}\setminus\lbrack
1,\infty).$ Note that since%
\[
b_{1}+1+n+b_{2}+1+n-\left(  a_{1}+n+a_{2}+n+a_{3}+n\right)  \leq0,\quad n\geq
b_{1}+b_{2}-a_{2}-a_{3}-a_{1}+2,
\]
$\nu_{n}\left(  1\right)  $ is undefined for $n\geq b_{1}+b_{2}-a_{2}%
-a_{3}-a_{1}+2.$

Stieltjes transform difference equation%

\[
\left(  t+b_{1}+1\right)  \left(  t+b_{2}+1\right)  \left(  t+1\right)
S\left(  t+1\right)  -z\left(  t+a_{1}\right)  \left(  t+a_{2}\right)  \left(
t+a_{3}\right)  S\left(  t\right)  =%
{\displaystyle\sum\limits_{k=0}^{2}}
\xi_{k}t^{k},
\]
$\allowbreak$where%
\begin{align*}
&  \xi_{2}=\left(  1-z\right)  \nu_{0},\quad\xi_{1}=\left[  e_{1}\left(
\mathbf{b}+1\right)  -e_{1}\left(  \mathbf{a}\right)  z\right]  \nu
_{0}+\left(  1-z\right)  \nu_{1},\\
&  \xi_{0}=\left[  e_{2}\left(  \mathbf{b}+1\right)  -e_{2}\left(
\mathbf{a}\right)  z\right]  \nu_{0}+\left[  e_{1}\left(  \mathbf{b}+1\right)
-e_{1}\left(  \mathbf{a}\right)  z-z\right]  \nu_{1}+\left(  1-z\right)
\nu_{2},
\end{align*}
and $e_{1},e_{2},e_{3}$ denote the \emph{elementary symmetric polynomials}
defined by%
\[
e_{1}\left(  \mathbf{x}\right)  =%
{\displaystyle\sum\limits_{i}}
x_{i},\quad e_{2}\left(  \mathbf{x}\right)  =%
{\displaystyle\sum\limits_{i<j}}
x_{i}x_{j},\quad e_{3}\left(  \mathbf{x}\right)  =%
{\displaystyle\sum\limits_{i<j<k}}
x_{i}x_{j}x_{k}.
\]
$\allowbreak$

\subsubsection{Uvarov Meixner polynomials}

Let $\omega\notin\left\{  -a,0\right\}  .$

Special values%
\[
a_{1}\rightarrow a,\quad a_{2}\rightarrow-\omega,\quad a_{3}\rightarrow
-\omega+1,\quad b_{1}\rightarrow-\omega-1,\quad b_{2}\rightarrow-\omega.
\]

Linear functional%
\[
L_{U}\left[  r\right]  =%
{\displaystyle\sum\limits_{x=0}^{\infty}}
r\left(  x\right)  \ \left(  a\right)  _{x}\ \frac{z^{x}}{x!}+Mr\left(
\omega\right)  .
\]

Pearson equation%
\[
\frac{\varrho_{U}\left(  x+1\right)  }{\varrho_{U}\left(  x\right)  }%
=\frac{z\left(  x+a\right)  \left(  x-\omega\right)  \left(  x-\omega
+1\right)  }{\left(  x-\omega\right)  \left(  x-\omega+1\right)  \left(
x+1\right)  }.
\]

Moments%
\[
\nu_{n}^{U}\left(  z\right)  =z^{n}\left(  a\right)  _{n}\left(  1-z\right)
^{-a-n}+M\phi_{n}\left(  \omega\right)  , \quad n\in\mathbb{N}_{0} .
\]

Stieltjes transform difference equation%
\begin{align*}
&  \left(  t-\omega\right)  \left(  t-\omega+1\right)  \left[  \left(
t+1\right)  S_{U}\left(  t+1\right)  -z\left(  t+a\right)  S_{U}\left(
t\right)  \right] \\
&  =\left[  \allowbreak(1-z)t^{2}+(-az+2\omega z-2\omega-z+1)t+\omega\left(
z+\omega-z\omega+2az-1\right)  -az\right]  \nu_{0}^{U\varsigma}\\
&  +\left[  (1-z)t-2z+2z\omega-2\omega-az+1\right]  \nu_{1}^{U}+\left(
1-z\right)  \nu_{2}^{U}.
\end{align*}

Stieltjes transform $M-$dependent difference equation%
\[
\left(  t+1\right)  S_{U}\left(  t+1\right)  -z\left(  t+a\right)
S_{U}\left(  t\right)  =\left(  1-z\right)  \nu_{0}+M\left[  \frac
{t+1}{t+1-\omega}-\frac{z\left(  t+a\right)  }{t-\omega}\right]  .
\]
\strut

\subsubsection{Symmetrized generalized Meixner polynomials}

Special values%
\begin{align*}
a_{1}  &  \rightarrow a,\quad a_{2}\rightarrow-b-2m,\quad a_{3}\rightarrow
-2m,\\
b_{1}  &  \rightarrow-a-2m,\quad b_{2}\rightarrow b-2m,\quad z\rightarrow
-1,\quad x\rightarrow x+m.
\end{align*}

Linear functional%
\[
L_{\varsigma}\left[  r\right]  =\varrho_{\varsigma}\left(  0\right)
{\displaystyle\sum\limits_{x=-m}^{m}}
r\left(  x\right)  \ \frac{\left(  a+m\right)  _{x}\left(  -b-m\right)
_{x}\left(  -m\right)  _{x}}{\left(  -a-m+1\right)  _{x}\left(  b+1-m\right)
_{x}}\frac{\left(  -1\right)  ^{x}}{\left(  m+1\right)  _{x}},
\]
where%
\[
\varrho_{\varsigma}\left(  0\right)  =\binom{2m}{m}\frac{\left(  a\right)
_{m}\ \left(  b+1+m\right)  _{m}}{\left(  a+m\right)  _{m}\ \left(
b+1-2m\right)  _{m}}.
\]

Pearson equation%
\[
\frac{\varrho_{\varsigma}\left(  x+1\right)  }{\varrho_{\varsigma}\left(
x\right)  }=\frac{-\left(  x+a+m\right)  \left(  x-b-m\right)  \left(
x-m\right)  }{\left(  x-a-m+1\right)  \left(  x+b+1+m\right)  \left(
x+1+m\right)  }.
\]

\strut Moments on the basis $\{\overline{\phi}_{n}\left(  x\right)
\}_{n\geq0} $%

\[
\nu_{n}^{\varsigma}=\frac{\left(  -1\right)  ^{n}\left(  a\right)  _{n}\left(
-b-2m\right)  _{n}\left(  -2m\right)  _{n}}{\left(  -a-2m+1\right)
_{n}\left(  b-2m+1\right)  _{n}}\ _{3}F_{2}\left[
\begin{array}
[c]{c}%
a+n,-b-2m+n,-2m+n\\
-a-2m+1+n,b-2m+1+n
\end{array}
;-1\right]  , \quad n\in\mathbb{N}_{0} .
\]

Stieltjes transform difference equation%
\begin{align*}
&  \left(  t-a-m+1\right)  \left(  t+b+1+m\right)  \left(  t+1+m\right)
S_{\varsigma}\left(  t+1\right)  +\left(  t+a+m\right)  \left(  t-b-m\right)
\left(  t-m\right)  S_{\varsigma}\left(  t\right) \\
&  =\left[  2t^{2}+2\left(  1-m\right)  t+b-a-2ab-4am+1\right]  \nu
_{0}^{\varsigma}+\left(  2t-4m+3\right)  \nu_{1}^{\varsigma}+2\nu
_{2}^{\varsigma}.
\end{align*}

\subsubsection{Reduced-Uvarov generalized Hahn polynomials of type I}

Let $\left\{  \omega,\Omega\right\}  \in\left\{  \left(  -a_{1},a_{1}%
+1\right)  ,\left(  -a_{2},a_{2}+1\right)  ,\left(  -b,b\right)  ,\left(
0,0\right)  \right\}  .$

Special values%
\[
a_{3}\rightarrow\Omega,\quad b_{1}\rightarrow b,\quad b_{2}\rightarrow
\Omega-1.
\]

Linear functional%
\[
L_{U}\left[  r\right]  =%
{\displaystyle\sum\limits_{x=0}^{\infty}}
r\left(  x\right)  \ \frac{\left(  a_{1}\right)  _{x}\left(  a_{2}\right)
_{x}}{\left(  b+1\right)  _{x}}\frac{z^{x}}{x!}+Mr\left(  \omega\right)  .
\]

Pearson equation%
\[
\frac{\varrho_{U}\left(  x+1\right)  }{\varrho_{U}\left(  x\right)  }%
=\frac{z\left(  x+a_{1}\right)  \left(  x+a_{2}\right)  \left(  x+\Omega
\right)  }{\left(  x+b+1\right)  \left(  x+\Omega\right)  \left(  x+1\right)
}.
\]

Moments%
\[
\nu_{n}^{U}\left(  z\right)  =z^{n}\frac{\left(  a_{1}\right)  _{n}\left(
a_{2}\right)  _{n}}{\left(  b+1\right)  _{n}}\ _{2}F_{1}\left[
\begin{array}
[c]{c}%
a_{1}+n,a_{2}+n\\
b+1+n
\end{array}
;z\right]  +M\phi_{n}\left(  \omega\right)  , \quad n\in\mathbb{N}_{0} .
\]

Stieltjes transform difference equation%
\begin{align*}
&  \left(  t+\Omega\right)  \left[  \left(  t+b+1\right)  \left(  t+1\right)
S_{U}\left(  t+1\right)  -z\left(  t+a_{1}\right)  \left(  t+a_{2}\right)
S_{U}\left(  t\right)  \right] \\
&  =\left[  (1-z)t^{2}+\left(  b+\Omega-z\Omega-za_{1}-za_{2}+1\right)
t-\left(  \Omega a_{1}+\Omega a_{2}+a_{1}a_{2}\right)  z+(b+1)\Omega\right]
\nu_{0}^{U}\\
&  +\left[  (1-z)t+1-z\left(  \Omega+a_{1}+a_{2}+1\right)  +\Omega+b\right]
\nu_{1}^{U}+\left(  1-z\right)  \nu_{2}^{U}.
\end{align*}

Stieltjes transform $M-$dependent difference equation%
\begin{align*}
&  \left(  t+b+1\right)  \left(  t+1\right)  S_{U}\left(  t+1\right)
-z\left(  t+a_{1}\right)  \left(  t+a_{2}\right)  S_{U}\left(  t\right)
=\left[  t+b+1-z\left(  t+a_{1}+a_{2}\right)  \right]  \nu_{0}\\
&  +\left(  1-z\right)  \nu_{1}+M\left[  \frac{\left(  t+b+1\right)  \left(
t+1\right)  }{t-\omega+1}-\frac{z\left(  t+a_{1}\right)  \left(
t+a_{2}\right)  }{t-\omega}\right]  .
\end{align*}

\subsubsection{Christoffel generalized Hahn polynomials of type I}

Let $\omega\notin\left\{  -a_{1},-a_{2},-b,0,\dfrac{\nu_{1}}{\nu_{0}}\right\}
.$

Special values%
\[
a_{3}\rightarrow-\omega+1,\quad b_{1}\rightarrow b,\quad b_{2}\rightarrow
-\omega-1.
\]

Linear functional%
\[
L_{C}\left[  r\right]  =%
{\displaystyle\sum\limits_{x=0}^{\infty}}
r\left(  x\right)  \left(  x-\omega\right)  \frac{\left(  a_{1}\right)
_{x}\left(  a_{2}\right)  _{x}}{\left(  b+1\right)  _{x}}\frac{z^{x}}{x!}.
\]

Pearson equation%
\[
\frac{\varrho_{C}\left(  x+1\right)  }{\varrho_{C}\left(  x\right)  }%
=\frac{z\left(  x+a_{1}\right)  \left(  x+a_{2}\right)  \left(  x+1-\omega
\right)  }{\left(  x+b+1\right)  \left(  x+1\right)  \left(  x-\omega\right)
}, \quad n\in\mathbb{N}_{0}.
\]

Moments%
\[
\nu_{n}^{C}\left(  z\right)  =\nu_{n+1}+\left(  n-\omega\right)  \nu_{n}.
\]

Stieltjes transform difference equation%
\begin{align*}
&  \left(  t-\omega\right)  \left(  t+b+1\right)  \left(  t+1\right)
S_{C}\left(  t+1\right)  -z\left(  t-\omega+1\right)  \left(  t+a_{1}\right)
\left(  t+a_{2}\right)  S_{C}\left(  t\right) \\
&  =\left[  \left(  z-1\right)  \omega t^{2}-\left(  \omega+b\omega
-z\omega-\omega^{2}+z\omega^{2}-z\omega a_{1}-z\omega a_{2}-za_{1}%
a_{2}\right)  t\right]  \nu_{0}\\
&  +\left[  \omega^{2}\left(  b+1\right)  -z\left(  \omega-1\right)  \left(
\omega a_{1}+\omega a_{2}+a_{1}a_{2}\right)  \right]  \nu_{0}-\left(
z-1\right)  \left(  t-\omega\right)  \left(  t-\omega+1\right)  \nu_{1}.
\end{align*}

\subsubsection{Geronimus generalized Hahn polynomials of type I}

Let $\omega\notin\left\{  -b-1,-1,1-a_{1},1-a_{2}\right\}  \cup\mathbb{N}%
_{0}.$

Special values%
\[
a_{3}\rightarrow-\omega,\quad b_{1}\rightarrow b,\quad b_{2}\rightarrow
-\omega.
\]

Linear functional%
\[
L_{G}\left[  r\right]  =%
{\displaystyle\sum\limits_{x=0}^{\infty}}
\frac{r\left(  x\right)  }{x-\omega}\frac{\left(  a_{1}\right)  _{x}\left(
a_{2}\right)  _{x}}{\left(  b+1\right)  _{x}}\frac{z^{x}}{x!}+Mr\left(
\omega\right)  .
\]

Pearson equation%
\[
\frac{\varrho_{G}\left(  x+1\right)  }{\varrho_{G}\left(  x\right)  }%
=\frac{z\left(  x+a_{1}\right)  \left(  x+a_{2}\right)  \left(  x-\omega
\right)  }{\left(  x+b+1\right)  \left(  x+1\right)  \left(  x+1-\omega
\right)  }.
\]

Moments%
\[
\nu_{n}^{G}\left(  z\right)  =\phi_{n}\left(  \omega\right)  \left[
M-S\left(  \omega\right)  +%
{\displaystyle\sum\limits_{k=0}^{n-1}}
\frac{\nu_{k}}{\phi_{k+1}\left(  \omega\right)  }\right]  ,\quad
n\in\mathbb{N}_{0}.
\]

Stieltjes transform difference equation%
\begin{align*}
&  \left(  t-\omega+1\right)  \left(  t+b+1\right)  \left(  t+1\right)
S_{G}\left(  t+1\right)  -z\left(  t-\omega\right)  \left(  t+a_{1}\right)
\left(  t+a_{2}\right)  S_{G}\left(  t\right) \\
&  =\left[  \left(  1-z\right)  t+\left(  b-za_{1}-za_{2}+1\right)  \right]
\nu_{0}+\left(  1-z\right)  \nu_{1}\\
&  +\left[  \left(  t+b+1\right)  \left(  t+1\right)  -z\left(  t+a_{1}%
\right)  \left(  t+a_{2}\right)  \right]  \nu_{0}^{G}.
\end{align*}

\subsubsection{Truncated generalized Hahn polynomials of type I}

Special values%
\[
a_{3}\rightarrow-N,\quad b_{1}\rightarrow b,\quad b_{2}\rightarrow-N-1.
\]

Linear functional%
\[
L_{T}\left[  r\right]  =%
{\displaystyle\sum\limits_{x=0}^{N}}
r\left(  x\right)  \ \frac{\left(  a_{1}\right)  _{x}\left(  a_{2}\right)
_{x}}{\left(  b+1\right)  _{x}}\frac{z^{x}}{x!}.
\]

Pearson equation%
\[
\frac{\varrho_{T}\left(  x+1\right)  }{\varrho_{T}\left(  x\right)  }%
=\frac{z\left(  x+a_{1}\right)  \left(  x+a_{2}\right)  \left(  x-N\right)
}{\left(  x+b+1\right)  \left(  x-N\right)  \left(  x+1\right)  }.
\]

Moments%
\[
\nu_{n}^{T}\left(  z\right)  =\frac{\left(  a_{1}\right)  _{N}\ \left(
a_{2}\right)  _{N}\ }{\left(  b+1\right)  _{N}}\frac{z^{N}}{\left(
N-n\right)  !}\ _{3}F_{2}\left[
\begin{array}
[c]{c}%
n-N,1,\ -N-b\\
1-N-\ a_{1},1-N-\ a_{2}%
\end{array}
;\frac{1}{z}\right]  , \quad n\in\mathbb{N}_{0} .
\]

Stieltjes transform difference equation%
\begin{align*}
&  \left(  t-N\right)  \left[  \left(  t+b+1\right)  \left(  t+1\right)
S_{U}\left(  t+1\right)  -z\left(  t+a_{1}\right)  \left(  t+a_{2}\right)
S_{U}\left(  t\right)  \right] \\
&  =\left[  (1-z)t^{2}+\left(  b-N+zN-za_{1}-za_{2}+1\right)  t-\left(
-Na_{1}-Na_{2}+a_{1}a_{2}\right)  z-(b+1)N\right]  \nu_{0}^{U}\\
&  +\left[  (1-z)t+1-z\left(  -N+a_{1}+a_{2}+1\right)  -N+b\right]  \nu
_{1}^{U}+\left(  1-z\right)  \nu_{2}^{U}.
\end{align*}

\subsubsection{Symmetrized polynomials of type $(0,2)$}

Special values%
\[
a_{1}\rightarrow-b_{1}-2m,\quad a_{2}\rightarrow-b_{2}-2m,\quad a_{3}%
\rightarrow-2m,\quad z\rightarrow-1,\quad x\rightarrow x+m.
\]

Linear functional%
\[
L_{\varsigma}\left[  r\right]  =\varrho_{\varsigma}\left(  0\right)
{\displaystyle\sum\limits_{x=-m}^{m}}
r\left(  x\right)  \ \frac{\left(  -b_{1}-m\right)  _{x}\left(  -b_{2}%
-m\right)  _{x}\left(  -m\right)  _{x}}{\left(  b_{1}+1+m\right)  _{x}\left(
b_{2}+1+m\right)  _{x}}\frac{\left(  -1\right)  ^{x}}{\left(  m+1\right)
_{x}},
\]
where%
\[
\varrho_{\varsigma}\left(  0\right)  =\binom{2m}{m}\frac{\ \left(
b_{1}+1+m\right)  _{m}\left(  b_{1}+1+m\right)  _{m}}{\ \left(  b_{1}%
+1\right)  _{m}\left(  b_{2}+1\right)  _{m}}.
\]

Pearson equation%
\[
\frac{\varrho_{\varsigma}\left(  x+1\right)  }{\varrho_{\varsigma}\left(
x\right)  }=\frac{-\left(  x-b_{1}-m\right)  \left(  x-b_{2}-m\right)  \left(
x-m\right)  }{\left(  x+b_{1}+1+m\right)  \left(  x+b_{2}+1+m\right)  \left(
x+1+m\right)  }.
\]

\strut Moments on the basis $\{\overline{\phi}_{n}\left(  x\right)
\}_{n\geq0}. $

For $n\in\mathbb{N}_{0}$%

\[
\nu_{n}^{\varsigma}=\frac{\left(  -1\right)  ^{n}\left(  -2m\right)
_{n}\left(  -b_{1}-2m\right)  _{n}\left(  -b_{2}-2m\right)  _{n}}{\left(
b_{1}+1\right)  _{n}\left(  b_{2}+1\right)  _{n}}\ _{3}F_{2}\left[
\begin{array}
[c]{c}%
-2m+n,-b_{1}-2m,-b_{2}-2m+n,\\
b_{1}+1+n,b_{2}+1+n
\end{array}
;-1\right]  .
\]

Stieltjes transform difference equation%
\begin{align*}
&  \left(  t+b_{1}+1+m\right)  \left(  t+b_{2}+1+m\right)  \left(
t+1+m\right)  S_{\varsigma}\left(  t+1\right) \\
&  +\left(  t-b_{1}-m\right)  \left(  t-b_{2}-m\right)  \left(  t-m\right)
S_{\varsigma}\left(  t\right) \\
&  =\left[  2t^{2}+2\left(  1-m\right)  t+8m^{2}+2m\left(  2b_{1}%
+2b_{2}+1\right)  +b_{1}+b_{2}+2b_{1}b_{2}+1\right]  \nu_{0}^{\varsigma}\\
&  +\left(  2t-4m+3\right)  \nu_{1}^{\varsigma}+2\nu_{2}^{\varsigma}.
\end{align*}

\subsection{\strut Polynomials of type (4,3;N,1)}

\strut Linear functional%
\[
L\left[  r\right]  =%
{\displaystyle\sum\limits_{x=0}^{N}}
r\left(  x\right)  \ \frac{\left(  a_{1}\right)  _{x}\left(  a_{2}\right)
_{x}\left(  a_{3}\right)  _{x}\left(  -N\right)  _{x}}{\left(  b_{1}+1\right)
_{x}\left(  b_{2}+1\right)  _{x}\left(  b_{3}+1\right)  _{x}}\frac{1}{x!}.
\]

Pearson equation%
\[
\frac{\varrho\left(  x+1\right)  }{\varrho\left(  x\right)  }=\frac{\left(
x+a_{1}\right)  \left(  x+a_{2}\right)  \left(  x+a_{3}\right)  \left(
x-N\right)  }{\left(  x+b_{1}+1\right)  \left(  x+b_{2}+1\right)  \left(
x+b_{3}+1\right)  \left(  x+1\right)  }.
\]

Moments

For $n\in\mathbb{N}_{0}$%
\[
\nu_{n}=\frac{\left(  -N\right)  _{n}\left(  a_{1}\right)  _{n}\left(
a_{2}\right)  _{n}\left(  a_{3}\right)  _{n}}{\left(  b_{1}+1\right)
_{n}\left(  b_{2}+1\right)  _{n}\left(  b_{3}+1\right)  _{n}}\ _{4}%
F_{3}\left[
\begin{array}
[c]{c}%
-N+n,a_{1}+n,a_{2}+n,a_{3}+n\\
b_{1}+1+n,b_{2}+1+n,b_{3}+1+n
\end{array}
;1\right]  .
\]

Stieltjes transform difference equation%
\begin{gather*}
\left(  t+b_{1}+1\right)  \left(  t+b_{2}+1\right)  \left(  t+b_{3}+1\right)
\left(  t+1\right)  S\left(  t+1\right) \\
-\left(  t+a_{1}\right)  \left(  t+a_{2}\right)  \left(  t+a_{3}\right)
\left(  t-N\right)  S(t)=%
{\displaystyle\sum\limits_{k=0}^{2}}
\xi_{k}t^{k},
\end{gather*}
where%
\begin{align*}
\xi_{2}  &  =\left[  e_{1}\left(  \mathbf{b}+1\right)  -e_{1}\left(
\mathbf{a}\right)  \right]  \nu_{0},\quad\xi_{1}=\left[  e_{2}\left(
\mathbf{b}+1\right)  -e_{2}\left(  \mathbf{a}\right)  \right]  \nu_{0}+\left[
e_{1}\left(  \mathbf{b}+1\right)  -e_{1}\left(  \mathbf{a}\right)  -1\right]
\nu_{1},\\
\xi_{0}  &  =\left[  e_{3}\left(  \mathbf{b}+1\right)  -e_{3}\left(
\mathbf{a}\right)  \right]  \nu_{0}+\left[  e_{2}\left(  \mathbf{b}+1\right)
-e_{2}\left(  \mathbf{a}\right)  -e_{1}\left(  \mathbf{a}\right)  -1\right]
\nu_{1}+\left[  e_{1}\left(  \mathbf{b}+1\right)  -e_{1}\left(  \mathbf{a}%
\right)  -2\right]  \nu_{2},
\end{align*}
with $a_{4}=-N.$

If we use the relations%
\[
e_{1}\left(  \mathbf{a},-N\right)  =e_{1}\left(  \mathbf{a}\right)  -N,\quad
e_{2}\left(  \mathbf{a},-N\right)  =e_{2}\left(  \mathbf{a}\right)
-Ne_{1}\left(  \mathbf{a}\right)  ,\quad e_{3}\left(  \mathbf{a},-N\right)
=e_{3}\left(  \mathbf{a}\right)  -Ne_{2}\left(  \mathbf{a}\right)  ,
\]
we obtain expressions for the coefficients $\xi_{k}$ explicitly depending on
$N$
\begin{align*}
\xi_{2}  &  =\left[  e_{1}\left(  \mathbf{b}+1\right)  -e_{1}\left(
\mathbf{a}\right)  +N\right]  \nu_{0},\\
\xi_{1}  &  =\left[  e_{2}\left(  \mathbf{b}+1\right)  -e_{2}\left(
\mathbf{a}\right)  +Ne_{1}\left(  \mathbf{a}\right)  \right]  \nu_{0}+\left[
e_{1}\left(  \mathbf{b}+1\right)  -e_{1}\left(  \mathbf{a}\right)
+N-1\right]  \nu_{1},\\
\xi_{0}  &  =\left[  e_{3}\left(  \mathbf{b}+1\right)  -e_{3}\left(
\mathbf{a}\right)  +Ne_{2}\left(  \mathbf{a}\right)  \right]  \nu_{0}+\left[
e_{2}\left(  \mathbf{b}+1\right)  -e_{2}\left(  \mathbf{a}\right)  +\left(
N-1\right)  \left(  e_{1}\left(  \mathbf{a}\right)  +1\right)  \right]
\nu_{1}\\
&  +\left[  e_{1}\left(  \mathbf{b}+1\right)  -e_{1}\left(  \mathbf{a}\right)
+N-2\right]  \nu_{2}.
\end{align*}

\subsubsection{Uvarov Hahn polynomials}

Let $\omega\notin\left\{  -a,-b,0,N\right\}  .$

Special values%

\begin{align*}
a_{1}  &  \rightarrow a,\quad a_{2}\rightarrow-\omega,\quad a_{3}%
\rightarrow-\omega+1,\\
b_{1}  &  \rightarrow b,\quad b_{2}\rightarrow-\omega-1,\quad b_{3}%
\rightarrow-\omega.
\end{align*}

Linear functional%

\[
L_{U}\left[  r\right]  =%
{\displaystyle\sum\limits_{x=0}^{N}}
r\left(  x\right)  \ \frac{\left(  a\right)  _{x}\left(  -N\right)  _{x}%
}{\left(  b+1\right)  _{x}}\ \frac{1}{x!}+Mr\left(  \omega\right)  .
\]

Pearson equation%
\[
\frac{\varrho_{U}\left(  x+1\right)  }{\varrho_{U}\left(  x\right)  }%
=\frac{\left(  x+a\right)  \left(  x-\omega\right)  \left(  x-\omega+1\right)
\left(  x-N\right)  }{\left(  x+b+1\right)  \left(  x-\omega\right)  \left(
x-\omega+1\right)  \left(  x+1\right)  }.
\]

Moments%
\[
\nu_{n}^{U}=\frac{\left(  -N\right)  _{n}\left(  a\right)  _{n}}{\left(
b+1\right)  _{n}}\frac{\left(  b+1-a\right)  _{N-n}}{\left(  b+1+n\right)
_{N-n}}+M\phi_{n}\left(  \omega\right)  , \quad n\in\mathbb{N}_{0} .
\]

Stieltjes transform difference equation%
\begin{align*}
&  \left(  t-\omega\right)  \left(  t-\omega+1\right)  \left[  \left(
t+b+1\right)  \left(  t+1\right)  S_{U}\left(  t+1\right)  -\left(
t+a\right)  \left(  t-N\right)  S_{U}\left(  t\right)  \right] \\
&  =\left[  \allowbreak\allowbreak\left(  N-a+b+1\right)  t^{2}+\left(
N-a+b-2\omega-2N\omega+2a\omega-2b\omega+Na+1\right)  t\right]  \nu
_{0}^{U\varsigma}\\
&  +\left(  a\omega-N\omega-\omega-b\omega+\omega^{2}+N\omega^{2}-a\omega
^{2}+b\omega^{2}+Na-2\allowbreak Na\omega\right)  \nu_{0}^{U\varsigma}\\
&  +\left[  \allowbreak\left(  N-a+b\right)  t+2N-2a+b-2N\omega+2a\omega
-2b\omega+Na-1\right]  \nu_{1}^{U}+\left(  -a-1+b+N\right)  \nu_{2}^{U}.
\end{align*}

Stieltjes transform $M-$dependent difference equation%
\begin{align*}
&  \left(  t+b+1\right)  \left(  t+1\right)  S_{U}\left(  t+1\right)  -\left(
t+a\right)  \left(  t-N\right)  S_{U}\left(  t\right) \\
&  =\left(  b+1-a+N\right)  \nu_{0}+M\left[  \frac{\left(  t+b+1\right)
\left(  t+1\right)  }{t+1-\omega}-\frac{\left(  t+a\right)  \left(
t-N\right)  }{t-\omega}\right]  .
\end{align*}

\subsubsection{Symmetrized generalized Hahn polynomials of type I}

Special values%

\begin{align*}
a_{3}  &  \rightarrow-b-N,\quad b_{1}\rightarrow-a_{1}-N,\quad b_{2}%
\rightarrow-a_{2}-N,\\
b_{3}  &  \rightarrow b,\quad N\rightarrow2m,\quad x\rightarrow x+m.
\end{align*}

Linear functional%

\[
L_{\varsigma}\left[  r\right]  =\varrho_{\varsigma}\left(  0\right)
{\displaystyle\sum\limits_{x=-m}^{m}}
r\left(  x\right)  \ \frac{\left(  a_{1}+m\right)  _{x}\left(  a_{2}+m\right)
_{x}\left(  -b-m\right)  _{x}\left(  -m\right)  _{x}}{\left(  -a_{1}%
-m+1\right)  _{x}\left(  -a_{2}-m+1\right)  _{x}\left(  b+1+m\right)  _{x}%
}\frac{1}{\left(  m+1\right)  _{x}},
\]
where%
\[
\varrho_{\varsigma}\left(  0\right)  =\binom{2m}{m}\frac{\ \left(
a_{1}\right)  _{m}\left(  a_{2}\right)  _{m}\left(  b+1+m\right)  _{m}%
}{\left(  a_{1}+m\right)  _{m}\left(  a_{2}+m\right)  _{m}\ \left(
b+1\right)  _{m}}.
\]

Pearson equation%
\[
\frac{\varrho_{\varsigma}\left(  x+1\right)  }{\varrho_{\varsigma}\left(
x\right)  }=\frac{\left(  x+a_{1}+m\right)  \left(  x+a_{2}+m\right)  \left(
x-b-m\right)  \left(  x-m\right)  }{\left(  x+1-a_{1}-m\right)  \left(
x+1-a_{2}-m\right)  \left(  x+1+b+m\right)  \left(  x+1+m\right)  }.
\]

\strut Moments on the basis $\{\overline{\phi}_{n}\left(  x\right)
\}_{n\geq0} . $

For $n\in\mathbb{N}_{0}$%

\begin{align*}
\nu_{n}^{\varsigma}  &  =\frac{\left(  a_{1}\right)  _{n}\left(  a_{2}\right)
_{n}\left(  -b-2m\right)  _{n}\left(  -2m\right)  _{n}}{\left(  -a_{1}%
-2m+1\right)  _{n}\left(  -a_{2}-2m+1\right)  _{n}\left(  b+1\right)  _{n}}\\
&  \times\ _{4}F_{3}\left[
\begin{array}
[c]{c}%
-2m+n,a_{1}+n,a_{2}+n,-b-2m+n\\
-a_{1}-2m+1+n,-a_{2}-2m+1+n,b+1+n
\end{array}
;1\right]  .
\end{align*}

Stieltjes transform difference equation%
\begin{align*}
&  \left(  t+1-a_{1}-m\right)  \left(  t+1-a_{2}-m\right)  \left(
t+1+b+m\right)  \left(  t+1+m\right)  S_{\varsigma}\left(  t+1\right) \\
&  -\left(  t+a_{1}+m\right)  \left(  t+a_{2}+m\right)  \left(  t-b-m\right)
\left(  t-m\right)  S_{\varsigma}\left(  t\right) \\
&  =\left[  2\left(  b-a_{1}-a_{2}+1\right)  \left(  t+1-m\right)  t+t\left(
t+1\right)  \right]  \nu_{0}^{\varsigma}\\
&  +\left[  b-m-2bm-m^{2}+1-\left(  b+1\right)  \left(  a_{1}+a_{2}\right)
+\left(  2b+4m+1\right)  a_{1}a_{2}\right]  \nu_{0}^{\varsigma}\\
&  +\left[  \left(  2t+3-4m\right)  \left(  b-a_{1}-a_{2}\right)  +2\left(
t+1-m\right)  \right]  \nu_{1}^{\varsigma}+\left(  2b+1-2a_{1}-2a_{2}\right)
\nu_{2}^{\varsigma}.
\end{align*}

\subsubsection{Reduced-Uvarov generalized Hahn polynomials of type II}

Let $\left\{  \omega,\Omega\right\}  \in\left\{  \left(  -a_{1},a_{1}%
+1\right)  ,\left(  -a_{2},a_{2}+1\right)  ,\left(  -b_{1},b_{1}\right)
,\left(  -b_{2},b_{2}\right)  ,\left(  0,0\right)  ,\left(  N,-N+1\right)
\right\}  .$

Special values%
\[
a_{3}\rightarrow\Omega,\quad b_{3}\rightarrow\Omega-1.
\]

Linear functional%

\[
L_{U}\left[  r\right]  =%
{\displaystyle\sum\limits_{x=0}^{N}}
r\left(  x\right)  \ \frac{\left(  -N\right)  _{x}\left(  a_{1}\right)
_{x}\left(  a_{2}\right)  _{x}}{\left(  b_{1}+1\right)  _{x}\left(
b_{2}+1\right)  _{x}}\frac{1}{x!}+Mr\left(  \omega\right)  .
\]

Pearson equation%
\[
\frac{\varrho_{U}\left(  x+1\right)  }{\varrho_{U}\left(  x\right)  }%
=\frac{\left(  x+a_{1}\right)  \left(  x+a_{2}\right)  \left(  x+\Omega
\right)  \left(  x-N\right)  }{\left(  x+b_{1}+1\right)  \left(
x+b_{2}+1\right)  \left(  x+\Omega\right)  \left(  x+1\right)  }.
\]

Moments%
\[
\nu_{n}^{U}=\frac{\left(  -N\right)  _{n}\left(  a_{1}\right)  _{n}\left(
a_{2}\right)  _{n}}{\left(  b_{1}+1\right)  _{n}\left(  b_{2}+1\right)  _{n}%
}\ _{3}F_{2}\left[
\begin{array}
[c]{c}%
-N+n,a_{1}+n,a_{2}+n\\
b_{1}+1+n,b_{2}+1+n
\end{array}
;1\right]  +M\phi_{n}\left(  \omega\right)  , \quad n\in\mathbb{N}_{0} .
\]

Stieltjes transform difference equation%
\begin{align*}
&  \left(  t+\Omega\right)  \left[  \left(  t+b_{1}+1\right)  \left(
t+b_{2}+1\right)  \left(  t+1\right)  S_{U}\left(  t+1\right)  -\left(
t+a_{1}\right)  \left(  t+a_{2}\right)  \left(  t-N\right)  S_{U}\left(
t\right)  \right] \\
&  =\left[  (N-a_{1}-a_{2}+b_{1}+b_{2}+2)\left(  t+\Omega\right)
t+(b_{1}+1)(b_{2}+1)t+N\left(  a_{1}+a_{2}\right)  t-a_{1}a_{2}t\right]
\nu_{0}^{U}\\
&  +\left[  (b_{1}+1)(b_{2}+1)\Omega+a_{1}a_{2}\left(  N-\Omega\right)
+\left(  a_{1}+a_{2}\right)  \Omega N\right]  \nu_{0}^{U}\\
&  +\left[  \left(  N-a_{1}-a_{2}+b_{1}+b_{2}+1\right)  \left(  t+\Omega
\right)  +N+b_{1}+b_{2}+b_{1}b_{2}+\allowbreak\left(  N-1\right)  \left(
a_{1}+a_{2}\right)  -a_{1}a_{2}\right]  \nu_{1}^{U}\\
&  +\left(  N+b_{1}+b_{2}-a_{1}-a_{2}\right)  \nu_{2}^{U}.
\end{align*}

Stieltjes transform $M-$dependent difference equation%
\begin{align*}
&  \left(  t+b_{1}+1\right)  \left(  t+b_{2}+1\right)  \left(  t+1\right)
S_{U}\left(  t+1\right)  -\left(  t+a_{1}\right)  \left(  t+a_{2}\right)
\left(  t-N\right)  S_{U}\left(  t\right) \\
&  =\left[  \left(  N+2+b_{1}+b_{2}-a_{1}-a_{2}\right)  t+b_{1}+b_{2}%
+b_{1}b_{2}+\left(  N-1\right)  \left(  a_{1}+a_{2}\right)  -a_{1}%
a_{2}+1\right]  \nu_{0}\\
&  +\left(  N+b_{1}+b_{2}-a_{1}-a_{2}+1\right)  \nu_{1}\\
&  +M\left[  \frac{\left(  t+b_{1}+1\right)  \left(  t+b_{2}+1\right)  \left(
t+1\right)  }{t-\omega+1}-\frac{\left(  t+a_{1}\right)  \left(  t+a_{2}%
\right)  \left(  t-N\right)  }{t-\omega}\right]  .
\end{align*}

\subsubsection{Christoffel generalized Hahn polynomials of type II}

Let $\omega\notin\left\{  -a_{1},-a_{2},-b_{1},-b_{2},0,N,\dfrac{\nu_{1}}%
{\nu_{0}}\right\}  .$

Special values%
\[
a_{3}\rightarrow-\omega+1,\quad b_{3}\rightarrow-\omega-1.
\]

Linear functional%
\[
L_{C}\left[  r\right]  =%
{\displaystyle\sum\limits_{x=0}^{\infty}}
r\left(  x\right)  \left(  x-\omega\right)  \frac{\left(  a_{1}\right)
_{x}\left(  a_{2}\right)  _{x}\left(  -N\right)  _{x}}{\left(  b_{1}+1\right)
_{x}\left(  b_{2}+1\right)  _{x}}\frac{1}{x!}.
\]

Pearson equation%
\[
\frac{\varrho_{C}\left(  x+1\right)  }{\varrho_{C}\left(  x\right)  }%
=\frac{\left(  x+a_{1}\right)  \left(  x+a_{2}\right)  \left(  x-N\right)
\left(  x+1-\omega\right)  }{\left(  x+b_{1}+1\right)  \left(  x+b_{2}%
+1\right)  \left(  x+1\right)  \left(  x-\omega\right)  }.
\]

Moments%
\[
\nu_{n}^{C}\left(  z\right)  =\nu_{n+1}+\left(  n-\omega\right)  \nu_{n},
\quad n\in\mathbb{N}_{0}.
\]

Stieltjes transform difference equation%

\begin{align*}
&  \left(  t-\omega\right)  \left(  t+b_{1}+1\right)  \left(  t+b_{2}%
+1\right)  \left(  t+1\right)  S_{C}\left(  t+1\right)  -\left(
t-\omega+1\right)  \left(  t+a_{1}\right)  \left(  t+a_{2}\right)  \left(
t-N\right)  S_{C}\left(  t\right) \\
&  =-\left[  \omega\left(  N+b_{1}+b_{2}+1\right)  -\left(  \omega-1\right)
\left(  a_{1}+a_{2}\right)  \right]  t^{2}\nu_{0}+\omega\left(  \omega
-1\right)  \left(  N+b_{1}+b_{2}+1\right)  t\nu_{0}\\
&  +\left[  \left(  -N+\omega\right)  a_{1}a_{2}+\left(  3\omega
-N\omega-\omega^{2}-1\right)  \left(  a_{1}+a_{2}\right)  +\omega\left(
\omega-b_{1}b_{2}\right)  \right]  t\nu_{0}\\
&  +\left[  \omega\left(  \omega-1\right)  \left(  N-1\right)  \left(
a_{1}+a_{2}\right)  +\left(  \omega-1\right)  \left(  N-\omega\right)
a_{1}a_{2}+\omega^{2}\left(  b_{2}+1\right)  \left(  b_{1}+1\right)  \right]
\nu_{0}\\
&  +\left(  N-a_{1}-a_{2}+b_{1}+b_{2}+1\right)  \left(  t-\omega\right)
\left(  t-\omega+1\right)  \nu_{1}.
\end{align*}

\subsubsection{Geronimus generalized Hahn polynomials of type II}

Let $\omega\notin\left\{  -b_{1}-1,-b_{2}-1,-1,1-a_{1},1-a_{2},1+N\right\}
\cup\mathbb{N}_{0}.$

Special values%
\[
a_{3}\rightarrow-\omega,\quad b_{3}\rightarrow-\omega.
\]

Linear functional%
\[
L_{G}\left[  r\right]  =%
{\displaystyle\sum\limits_{x=0}^{\infty}}
\frac{r\left(  x\right)  }{x-\omega}\frac{\left(  a_{1}\right)  _{x}\left(
a_{2}\right)  _{x}\left(  -N\right)  _{x}}{\left(  b_{1}+1\right)  _{x}\left(
b_{2}+1\right)  _{x}}\frac{1}{x!}+Mr\left(  \omega\right)  .
\]

Pearson equation%
\[
\frac{\varrho_{G}\left(  x+1\right)  }{\varrho_{G}\left(  x\right)  }%
=\frac{\left(  x+a_{1}\right)  \left(  x+a_{2}\right)  \left(  x-N\right)
\left(  x-\omega\right)  }{\left(  x+b_{1}+1\right)  \left(  x+b_{2}+1\right)
\left(  x+1\right)  \left(  x+1-\omega\right)  }.
\]

Moments%
\[
\nu_{n}^{G}\left(  z\right)  =\phi_{n}\left(  \omega\right)  \left[
M-S\left(  \omega\right)  +%
{\displaystyle\sum\limits_{k=0}^{n-1}}
\frac{\nu_{k}}{\phi_{k+1}\left(  \omega\right)  }\right]  ,\quad
n\in\mathbb{N}_{0}.
\]

Stieltjes transform difference equation%

\begin{align*}
&  \left(  t-\omega+1\right)  \left(  t+b_{1}+1\right)  \left(  t+b_{2}%
+1\right)  \left(  t+1\right)  S_{G}\left(  t+1\right)  -\left(
t-\omega\right)  \left(  t+a_{1}\right)  \left(  t+a_{2}\right)  \left(
t-N\right)  S_{G}\left(  t\right) \\
&  =\left[  \left(  N+2+b_{1}+b_{2}-a_{1}-a_{2}\right)  t+\left(
b_{1}+1\right)  \left(  b_{2}+1\right)  +\left(  N-1\right)  \left(
a_{1}+a_{2}\right)  -a_{1}a_{2}\right]  \nu_{0}\\
&  +\left(  N+b_{1}+b_{2}-a_{1}-a_{2}+1\right)  \nu_{1}\\
&  +\left[  \allowbreak\left(  N-a_{1}-a_{2}+b_{1}+b_{2}+3\right)
t^{2}+\left(  2b_{1}+2b_{2}+Na_{1}+Na_{2}-a_{1}a_{2}+b_{1}b_{2}+3\right)
t\right]  \nu_{0}^{G}\\
&  +\left(  b_{1}+b_{2}+b_{1}b_{2}+Na_{1}a_{2}+1\right)  \nu_{0}^{G}.
\end{align*}

\subsubsection{Symmetrized polynomials of type $(1,2)$}

Special values%
\[
a_{1}\rightarrow a,\quad a_{2}\rightarrow-b_{1}-N,\quad a_{3}\rightarrow
-b_{2}-N,\quad b_{3}\rightarrow-a-N,\quad N\rightarrow2m,\quad x\rightarrow
x+m.
\]

Linear functional%
\[
L_{\varsigma}\left[  r\right]  =\varrho_{\varsigma}\left(  0\right)
{\displaystyle\sum\limits_{x=-m}^{m}}
r\left(  x\right)  \ \frac{\left(  a+m\right)  _{x}\left(  -b_{1}-m\right)
_{x}\left(  -b_{2}-m\right)  _{x}\left(  -m\right)  _{x}}{\left(
b_{1}+1+m\right)  _{x}\left(  b_{2}+1+m\right)  _{x}\left(  -a-m+1\right)
_{x}}\frac{1}{\left(  m+1\right)  _{x}},
\]
where%
\[
\varrho_{\varsigma}\left(  0\right)  =\binom{2m}{m}\frac{\left(  a\right)
_{m}\ \left(  b_{1}+1+m\right)  _{m}\left(  b_{1}+1+m\right)  _{m}}{\left(
a+m\right)  _{m}\ \left(  b_{1}+1\right)  _{m}\left(  b_{2}+1\right)  _{m}}.
\]

Pearson equation%
\[
\frac{\varrho_{\varsigma}\left(  x+1\right)  }{\varrho_{\varsigma}\left(
x\right)  }=\frac{\left(  x+a+m\right)  \left(  x-b_{1}-m\right)  \left(
x-b_{2}-m\right)  \left(  x-m\right)  }{\left(  x+b_{1}+1+m\right)  \left(
x+b_{2}+1+m\right)  \left(  x+1-a-m\right)  \left(  x+1+m\right)  }.
\]

\strut Moments on the basis $\{\overline{\phi}_{n}\left(  x\right)
\}_{n\geq0} $%

\begin{align*}
\nu_{n}^{\varsigma}  &  =\frac{\left(  -2m\right)  _{n}\left(  a\right)
_{n}\left(  -b_{1}-2m\right)  _{n}\left(  -b_{2}-2m\right)  _{n}}{\left(
1-a-2m\right)  _{n}\left(  b_{1}+1\right)  _{n}\left(  b_{2}+1\right)  _{n}}\\
&  \times\ _{4}F_{3}\left[
\begin{array}
[c]{c}%
-2m+n,a+n,-b_{1}-2m,-b_{2}-2m+n,\\
1-a-2m+n,b_{1}+1+n,b_{2}+1+n
\end{array}
;1\right]  , \quad n\in\mathbb{N}_{0} .
\end{align*}

Stieltjes transform difference equation%

\begin{gather*}
\left(  t+1-a-m\right)  \left(  t+b_{1}+1+m\right)  \left(  t+b_{2}%
+1+m\right)  \left(  t+1+m\right)  S_{\varsigma}\left(  t+1\right) \\
-\left(  t+a+m\right)  \left(  t-b_{1}-m\right)  \left(  t-b_{2}-m\right)
\left(  t-m\right)  S_{\varsigma}\left(  t\right) \\
=\left[  \allowbreak\left(  4m+3+2b_{1}+2b_{2}-2a\right)  t^{2}+2\left(
m-1\right)  \left(  a-b_{1}-b_{2}\right)  t\right]  \nu_{0}^{\varsigma}\\
+\left[  \left(  -4\allowbreak m^{2}+2m+3\right)  t-\left(  18a+4ab_{1}%
+4ab_{2}+1\right)  m-\left(  b_{1}+b_{2}+1\right)  \left(  a-1\right)
-b_{1}b_{2}\left(  2a-1\right)  \right]  \nu_{0}^{\varsigma}\\
+\left[  \allowbreak\left(  2t-4m+3\right)  \left(  b_{1}+b_{2}-a\right)
+2\left(  t+2mt+1\right)  +4m\left(  1-2m\right)  \right]  \nu_{1}^{\varsigma
}\\
+\left(  4m+1+2b_{1}+2b_{2}-2a\right)  \nu_{2}^{\varsigma}.
\end{gather*}

\section{Conclusions}

We have considered all solutions of the Pearson equation%
\[
\frac{\varrho\left(  x+1\right)  }{\varrho\left(  x\right)  }=\frac
{\eta\left(  x\right)  }{\sigma\left(  x+1\right)  },
\]
with $\deg\left(  \sigma\right)  \leq4$ and $\deg\left(  \sigma-\eta\right)
\leq3.$ We deduce $15$ canonical cases, from which $42$ subcases can be
obtained and we relate those to rational spectral transformations (Uvarov,
Christoffel, and Geronimus), truncations, and symmetrizations.

For each family, we have listed a representation of the moments and the first
order linear difference equations satisfied by their Stieltjes transform.

Extensions to the class $s=3$ are possible, but the complexity of the formulas
will demand new forms of notation in order to be able to type the results in a
comprehensive way.

\begin{acknowledgement}
This work of the first author was done while visiting the Johannes Kep\-ler
Universit\"{a}t Linz and supported by the strategic program "Innovatives
O\"{O}-- 2010 plus" from the Upper Austrian Government. The second author
acknowledges financial support of Direcci\'on general de Investigaci\'on,
Ministerio de Econom\'ia, Industria y Competitividad of Spain, grant
MTM2015-65888. C4- 2-P.
\end{acknowledgement}

\end{document}